\newtheorem{thm}{Theorem}[section]
\newtheorem{lmm}[thm]{Lemma}
\newtheorem{cor}[thm]{Corollary}
\newtheorem{prop}[thm]{Proposition}
\theoremstyle{definition}
\newcommand{\cov}{\mathrm{Cov}}
\newcommand{\ee}{\mathbb{E}}
\newcommand{\mf}{\mathcal{F}}
\newcommand{\pp}{\mathbb{P}}
\newcommand{\rr}{\mathbb{R}}
\newcommand{\var}{\mathrm{Var}}
\newcommand{\ve}{\varepsilon}
\numberwithin{equation}{section}
\renewcommand{\tilde}{\widetilde}
\renewcommand{\hat}{\widehat}
\begin{document}

\title{A new coefficient of correlation}
\author{Sourav Chatterjee}
\address{Department of Statistics, Stanford University, Sequoia Hall, 390 Jane Stanford Way, Stanford, CA 94305}
\email{souravc@stanford.edu}
\thanks{Research partially supported by NSF grant DMS-1855484}
\keywords{Independence, measure of association, correlation}
\subjclass[2010]{62H20, 62H15}

\begin{abstract}
Is it possible to define a coefficient of correlation which is (a) as simple as the classical coefficients like Pearson's correlation or Spearman's correlation, and yet (b) consistently estimates some simple and interpretable measure of the degree of dependence between the variables,  which is~$0$ if and only if the variables are independent and~$1$ if and only if one is a measurable function of the other, and (c) has a simple asymptotic theory under the hypothesis of independence, like the classical coefficients? This article answers this question in the affirmative, by producing such a coefficient. No assumptions are needed on the distributions of the variables. There are several coefficients in the literature that converge to $0$ if and only if the variables are independent, but none that satisfy any of the other properties mentioned above. 
\end{abstract}

\maketitle


\section{Introduction}
The three most popular classical measures of statistical association are Pearson's correlation coefficient, Spearman's $\rho$, and Kendall's $\tau$. These coefficients are very powerful for detecting linear or monotone associations, and they have well-developed asymptotic theories for calculating P-values. However, the big problem is that they are not effective for detecting associations that are not monotonic, even in the complete absence of noise. 

There have been many proposals to address this deficiency of the classical coefficients~\cite{Josse16}, such as the maximal correlation coefficient~\cite{hirschfeld35, gebelein41, Renyi59, Breiman85}, various coefficients based on joint cumulative distribution functions and ranks~\cite{gkl18, dhs18, hcl17, bd14, nwd16, wdl16, wdm18, yanagimoto70, csorgo85, purisen71, hoeffding48, bkr61, romano88, rosenblatt75, debsen19, wjl17}, kernel-based methods~\cite{pbsp18, gretton05, gretton08, sen14, zfgs18}, information theoretic coefficients~\cite{kraskov04, Linfoot57, Reshef11}, coefficients based on copulas~\cite{dss13, lopezpaz13, sklar59, Schweizer81, zhang19}, and coefficients based on  pairwise distances~\cite{Szekely07, sr09, hhg13,fr83,lyons13}.  

Some of these coefficients are popular among practitioners. But there are two common problems. First, most of these coefficients are designed for testing independence, and not for measuring the strength of the relationship between the variables. Ideally, one would like a coefficient that approaches its maximum value if and only if one variable looks more and more like a noiseless  function of the other, just as  Pearson correlation is close to its maximum value if and only if one variable is close to being a noiseless {\it linear} function of the other. It is sometimes believed that the maximal information coefficient~\cite{Reshef11} and the maximal correlation coefficient~\cite{Renyi59} measure the strength of the relationship in the above sense, but we will see later in Section \ref{micsec} that that's not necessarily correct. Although they are maximized when one variable is a function of the other, the converse is not true. They may be equal to $1$ even if the relationship is very noisy.

Second, most of these coefficients do not have simple asymptotic theories under the hypothesis of independence that facilitate the quick computation of P-values for testing independence. In the absence of such theories, the only recourse is to use computationally expensive permutation tests or other kinds of bootstrap. 

In this situation, one may wonder if it is at all possible to define a coefficient that is (a) as simple as the classical coefficients, and yet (b) is a consistent estimator of some measure of dependence which is $0$ if and only if the variables are independent and  $1$ if and only if one is a measurable function of the other, and (c) has a simple asymptotic theory under the hypothesis of independence, like the classical coefficients. 

Such a coefficient is presented below. The formula is so simple that it is likely that there are many such coefficients, some of them possibly having better properties than the one presented below.

Let $(X,Y)$ be a pair of random variables, where $Y$ is not  a constant.  Let $(X_1,Y_1),\ldots,(X_n,Y_n)$ be i.i.d.~pairs with the same law as $(X,Y)$, where $n\ge 2$. The new coefficient has a simpler formula if the $X_i$'s and the $Y_i$'s have no ties. This simpler formula is presented first, and then the general case is given. Suppose that the $X_i$'s and the $Y_i$'s have no ties. Rearrange the data as $(X_{(1)},Y_{(1)}),\ldots,(X_{(n)}, Y_{(n)})$ such that $X_{(1)}\le \cdots \le X_{(n)}$. Since the $X_i$'s have no ties, there is a unique way of doing this. Let $r_i$ be the rank of $Y_{(i)}$, that is, the number of $j$ such that $Y_{(j)}\le Y_{(i)}$. The new correlation coefficient is defined as
\begin{equation}\label{xindef1}
\xi_n(X,Y) := 1-\frac{3\sum_{i=1}^{n-1}|r_{i+1}-r_i|}{n^2-1}.
\end{equation}
In the presence of ties, $\xi_n$ is defined as follows. If there are ties among the $X_i$'s, then choose an increasing rearrangement as above by breaking ties uniformly at random. Let $r_i$ be as before, and additionally define $l_i$ to be the number of $j$ such that $Y_{(j)}\ge Y_{(i)}$. Then define
\[
\xi_n(X,Y) := 1-\frac{n\sum_{i=1}^{n-1}|r_{i+1}-r_i|}{2\sum_{i=1}^n l_i(n-l_i)}. 
\]
When there are no ties among the $Y_i$'s, $l_1,\ldots,l_n$ is just a permutation of $1,\ldots,n$, and so the denominator in the above expression is just $n(n^2-1)/3$, which reduces this definition to the earlier expression \eqref{xindef1}. 

The following theorem shows that $\xi_n$ is a consistent estimator of a certain measure of dependence between the random variables $X$ and $Y$. 

\begin{thm}\label{mainthm}
If $Y$ is not almost surely a constant, then as $n\to\infty$, $\xi_n(X,Y)$ converges almost surely to the deterministic limit
\begin{equation}\label{xidef}
\xi(X,Y) := \frac{\int \var(\ee(1_{\{Y\ge t\}}|X)) d\mu(t)}{\int\var(1_{\{Y\ge t\}}) d\mu(t)},
\end{equation}
where $\mu$ is the law of $Y$. This limit belongs to the interval $[0,1]$. It is  $0$ if and only if $X$ and $Y$ are independent, and it is $1$ if and only if there is a measurable function $f:\rr\to\rr$ such that $Y=f(X)$ almost surely. 
\end{thm}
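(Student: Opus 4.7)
\emph{Plan.} The plan is to rewrite $\xi_n(X,Y)$ as a functional of the joint empirical distribution of $(X_i,Y_i)$, compute its almost-sure limit via a two-level argument (sort by $X$; average indicator-flip counts over $Y$-thresholds), and then read off the three properties of $\xi(X,Y)$ from its closed form. The starting observation is the identity $|r_{i+1}-r_i|=n\,\mu_n\bigl((Y_{(i)}\wedge Y_{(i+1)},\,Y_{(i)}\vee Y_{(i+1)}]\bigr)$, where $\mu_n$ is the empirical law of $Y_1,\dots,Y_n$; summing over $i$ gives
\[
\sum_{i=1}^{n-1}|r_{i+1}-r_i|\;=\;n\!\int N_n(t)\,\d\mu_n(t),\qquad
N_n(t):=\sum_{i=1}^{n-1}\bigl|\1_{\{Y_{(i+1)}\ge t\}}-\1_{\{Y_{(i)}\ge t\}}\bigr|,
\]
so $N_n(t)$ counts the ``flips'' of the $\{0,1\}$-sequence obtained by ordering the $Y$'s by the corresponding $X$'s and thresholding at $t$. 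A similar manipulation writes the denominator as $\int G_n(t)(1-G_n(t))\,\d\mu_n(t)$ (up to explicit combinatorial constants) with $G_n(t):=n^{-1}\#\{j:Y_j\ge t\}$, so $\xi_n$ is a functional of the joint empirical measure.

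The crux is the pointwise almost-sure limit
\[
\frac{N_n(t)}{n}\;\longrightarrow\;2\,\ee[g_t(X)(1-g_t(X))],\qquad g_t(x):=\pp(Y\ge t\mid X=x).
\]
Heuristically, consecutive sorted $X$'s become arbitrarily close as $n\to\infty$, and conditional on $(X_{(i)},X_{(i+1)})=(x,x')$ the indicators $\1_{\{Y_{(i)}\ge t\}}$ and $\1_{\{Y_{(i+1)}\ge t\}}$ are independent Bernoullis with parameters $g_t(x)$ and $g_t(x')$; when $x\approx x'$ the probability that they disagree is close to $2g_t(x)(1-g_t(x))$. To make this rigorous for a merely measurable $g_t$, I would (i) approximate $g_t$ in $L^1(\mathrm{Law}(X))$ by a uniformly continuous function, controlling the error via the fact that the gaps $X_{(i+1)}-X_{(i)}$ vanish in an averaged sense, and (ii) bound $\var N_n(t)=O(n)$ by an Efron--Stein or martingale-difference estimate and invoke Borel--Cantelli to upgrade $L^2$ to almost-sure convergence. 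This is the main technical obstacle, precisely because $g_t$ carries no regularity and one must control the joint behavior of consecutive $X$-order statistics together with the dependence of $Y$ on $X$.

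With the pointwise limit and the Glivenko--Cantelli convergence $\mu_n\to\mu$ in hand, bounded convergence yields
\[
\frac{1}{n}\!\int N_n(t)\,\d\mu_n(t)\;\longrightarrow\;2\!\int\ee[g_t(X)(1-g_t(X))]\,\d\mu(t),
\]
and the analogous limit for the denominator produces $\int G(t)(1-G(t))\,\d\mu(t)=\int\var\1_{\{Y\ge t\}}\,\d\mu(t)$, where $G$ is the survival function of $Y$. The algebraic identity $\ee[g_t(1-g_t)]=\var\1_{\{Y\ge t\}}-\var\ee[\1_{\{Y\ge t\}}\mid X]$ then shows that the ratio of these limits equals $1-\xi(X,Y)$, so $\xi_n\to\xi$ a.s.; in the no-ties case a direct change of variable $U:=G(Y)$ yields $\int G(1-G)\,\d\mu=1/6$, which confirms that $3/(n^2-1)$ in \eqref{xindef1} is the correct normalization.

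Finally, the three properties of $\xi$ follow from the pointwise conditional-variance decomposition $\var\1_{\{Y\ge t\}}=\var\ee[\1_{\{Y\ge t\}}\mid X]+\ee\var[\1_{\{Y\ge t\}}\mid X]$ integrated against $\mu$. Nonnegativity of both summands gives $\xi\in[0,1]$. The case $\xi=0$ forces $\var\ee[\1_{\{Y\ge t\}}\mid X]=0$ for $\mu$-a.e.\ $t$, i.e.\ $\pp(Y\ge t\mid X)=\pp(Y\ge t)$ a.s.\ on a $\mu$-dense set of thresholds; monotonicity and right-continuity in $t$ extend this equality to every $t\in\R$, giving independence. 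The case $\xi=1$ forces $\ee\var[\1_{\{Y\ge t\}}\mid X]=0$ for $\mu$-a.e.\ $t$, so each $\1_{\{Y\ge t\}}$ agrees a.s.\ with a $\{0,1\}$-valued $\sigma(X)$-measurable function $h_t(X)$; picking a countable dense sequence $\{t_k\}$ in $\supp(\mu)$ and using right-continuity in $t$, the family $(h_{t_k}(X))_k$ determines $Y$ a.s., producing a measurable $f$ with $Y=f(X)$ a.s.
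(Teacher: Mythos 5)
Your overall strategy tracks the paper's quite closely: both reduce the numerator of $\xi_n$ to an average of indicator ``flip counts'' between $Y$-values paired with consecutive sorted $X$'s, both use a Lusin-type continuous approximation of the conditional survival function $g_t(x)=\pp(Y\ge t\mid X=x)$, both close with concentration plus Borel--Cantelli, and your derivation of the three properties of $\xi$ from the variance decomposition is essentially the paper's. However, two steps you label as routine hide the real technical content. First, after replacing $g_t$ by a continuous $\tilde g_t$ with $\nu(\{g_t\ne\tilde g_t\})<\ve$ ($\nu=$ law of $X$), you must bound the probability that the \emph{right-nearest-neighbour} of $X_1$ among $X_2,\dots,X_n$ lands in the bad set $\{g_t\ne\tilde g_t\}$. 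This point does not have law $\nu$, and ``gaps vanish on average'' does not control it. The paper supplies the missing device: a change-of-measure bound $\ee f(X_{n,1})\le 2\ee f(X_1)$ for every nonnegative measurable $f$ (proved by a counting argument showing each $X_j$ can be the right-neighbour of at most one other point). Without something like this, the Lusin approximation cannot be transported to the neighbour, and your Step (i) stalls.

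Second, your proposed concentration is too weak. A variance bound $\var N_n(t)=O(n)$ gives $\var(N_n(t)/n)=O(1/n)$, whose Chebyshev tails are not summable, so Borel--Cantelli does not upgrade the convergence to almost sure without further work (a subsequence-plus-monotonicity argument, which is not available here). The paper instead uses McDiarmid's bounded-difference inequality to get $\pp(|Q_n-\ee Q_n|\ge t)\le 2e^{-Cnt^2}$, which is summable; you should replace the $L^2$ estimate with that. Two smaller points: (a) passing from the pointwise limit of $N_n(t)/n$ to $\frac1n\int N_n\,\d\mu_n\to\int h\,\d\mu$ needs more than bounded convergence because both the integrand and the measure are random and varying --- one should first replace $\mu_n$ by $\mu$ using the uniform bound $\sup_t|F_n-F|\to0$ together with a total-variation bound on $t\mapsto N_n(t)/n$, and only then invoke dominated convergence via a Fubini argument; (b) when $X$ has atoms, the rank permutation is randomized, and the bounded-difference argument must be stabilized against this extra randomness, which the paper does by a smoothing/coupling trick ($X_i^\ve:=X_i+\ve Z_i$). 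None of these is a fatal flaw in your plan, but they are precisely where the difficulty of the theorem is concentrated, and your sketch leaves each of them open.
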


{\it Remarks.} (1) Unlike most coefficients, $\xi_n$ is not symmetric in $X$ and $Y$. But that is intentional. We would like to keep it that way  because we may want to understand if $Y$ is a function $X$, and not just if one of the variables is a function of the other. If we want to understand whether $X$ is a function of $Y$, we should use $\xi_n(Y,X)$ instead of $\xi_n(X,Y)$. A symmetric measure of dependence, if required, can be easily obtained by taking the maximum of $\xi_n(X,Y)$ and $\xi_n(Y,X)$. By Theorem \ref{mainthm}, this symmetrized coefficient converges in probability to $\max\{\xi(X,Y), \xi(Y,X)\}$, which is $0$ if and only if $X$ and $Y$ are independent, and $1$ if and only if at least one of $X$ and $Y$ is a measurable function of the other.

(2) It is clear that $\xi(X,Y)\in [0,1]$ since $\var(1_{\{Y\ge t\}})\ge \var(\ee(1_{\{Y\ge t\}}|X))$ for every $t$. If $X$ and $Y$ are independent, then $\ee(1_{\{Y\ge t\}}|X)$ is a constant, and therefore $\xi(X,Y)=0$. If $Y$ is a measurable function of $X$, then  $\ee(1_{\{Y\ge t\}}|X) = 1_{\{Y\ge t\}}$, and so $\xi(X,Y)=1$. The converse implications will be proved in Section \ref{proof1}. The most non-obvious part of Theorem \ref{mainthm} is the convergence of $\xi_n(X,Y)$ to $\xi(X,Y)$. The proof of this, given in Section \ref{proof1}, is quite lengthy. For the convenience of the reader (and to facilitate possible future improvements), a brief sketch of the proof is given in Section~\ref{sketchsec}.

(3) In Theorem \ref{mainthm}, there are no restrictions on the law of $(X,Y)$ other than that $Y$ is not a constant. In particular, $X$ and $Y$ can be discrete, continuous, light-tailed or heavy-tailed.


(4) The coefficient $\xi_n(X,Y)$ remains unchanged if we apply strictly increasing transformations  to $X$ and $Y$, because it is based on ranks.  For the same reason, it can be computed in time $O(n\log n)$. We will see later that the actual computation on a computer is also very fast. The cost that we have to pay for fast computability, as we will see in Section \ref{powersec}, is that the test of independence based on $\xi_n$ is sometimes less powerful than tests based on statistics whose computational times are quadratic in the sample size.

(5) The limiting value $\xi(X,Y)$ has appeared earlier in the literature~\cite{dss13, gkl18}. The paper \cite{dss13} gives a copula-based estimator for $\xi(X,Y)$ when $X$ and $Y$ are continuous, that is consistent under smoothness assumptions on the copula and appears to be computable in time $n^{5/3}$ for an optimal choice of tuning parameters. 


(6) The coefficient $\xi_n$ looks similar to some coefficients defined earlier~\cite{fr83, sg18}, but in spite of its simple form, it seems to be genuinely new. 

(7) Multivariate measures of dependence and conditional dependence inspired by $\xi_n$ are now  available in the preprint~\cite{ac19}. 


(8) If the $X_i$'s have ties, then $\xi_n(X,Y)$ is a randomized estimate of $\xi(X,Y)$, because of the randomness coming from the breaking of ties. This can be ignored if $n$ is large, because $\xi_n$ is guaranteed to be close to $\xi$ by Theorem \ref{mainthm}. Alternatively, one can consider taking the average of $\xi_n$ over all possible increasing rearrangements of the $X_i$'s. 

(9) If there are no ties among the $Y_i$'s, the maximum possible value of $\xi_n(X,Y)$ is $(n-2)/(n+1)$, which is attained if $Y_i=X_i$ for all $i$. This can be noticeably less than $1$ for small $n$. For example, for $n=20$, this value is approximately $0.86$. Users should be aware of this fact about $\xi_n$. On the other hand, it is not very hard to prove that the minimum possible value of $\xi_n(X,Y)$ is $-1/2+O(1/n)$, and the minimum is attained when the top $n/2$ values of $Y_i$ are placed alternately with the bottom $n/2$ values. This seems to be paradoxical, since Theorem \ref{mainthm} says that the limiting value is in $[0,1]$. The resolution is that Theorem \ref{mainthm} only applies to i.i.d.~samples. Therefore a large  negative value of $\xi_n$ has only one possible interpretation: the data does not resemble an i.i.d.~sample.

(10) An R package for calculating $\xi_n$ and P-values for testing independence (based on the theory presented in the next section), named XICOR, is available on CRAN~\cite{ch20}. 




\section{Testing independence}
The main purpose of $\xi_n$ is to provide a measure of the strength of the relationship between $X$ and $Y$, and not to serve as a test statistic for testing independence. However, one can use it for testing independence if so desired. In fact, it has a nice and simple asymptotic theory under independence. The next theorem gives the asymptotic distribution of $\sqrt{n}\xi_n$ under the hypothesis of independence and the assumption that $Y$ is continuous. The more general asymptotic theory in the absence of continuity is presented after that. 

\begin{thm}\label{cltthm0}
Suppose that $X$ and $Y$ are independent and $Y$ is continuous. Then  $\sqrt{n}\xi_n(X,Y)\to N(0,2/5)$ in distribution as $n\to \infty$.
\end{thm}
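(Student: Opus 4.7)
The first step is a reduction to uniform random permutations: because $Y$ is continuous and $X$ is independent of $Y$, after sorting the data by $X$-value the vector $(r_1,\dots,r_n)$ of $Y$-ranks is a uniformly random permutation of $\{1,\dots,n\}$, and $\xi_n = 1 - 3 S_n/(n^2-1)$ with $S_n := \sum_{i=1}^{n-1}|r_{i+1}-r_i|$. Since the ordered pair $(r_i,r_{i+1})$ is uniform over pairs of distinct elements of $\{1,\dots,n\}$, a direct computation gives $\ee[S_n] = (n^2-1)/3$, so $\ee[\xi_n] = 0$ and the theorem is equivalent to $(S_n - \ee[S_n])/n^{3/2} \to N(0,2/45)$.

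To obtain an analytic handle on $S_n$, I would realize $(r_1,\dots,r_n)$ via i.i.d.\ uniforms $U_1,\dots,U_n \sim \mathrm{Unif}[0,1]$ with $r_i$ the rank of $U_i$. An elementary count shows
\[
|r_{i+1}-r_i| = 1 + N_i, \qquad N_i := \#\{j \notin \{i,i+1\}: U_j \in I_i\},
\]
where $I_i$ is the open interval with endpoints $U_i$ and $U_{i+1}$. Conditional on $(U_i,U_{i+1})$, $N_i \sim \mathrm{Bin}(n-2,V_i)$ with $V_i := |U_{i+1}-U_i|$, so
\[
S_n = (n-1) + (n-2) T_n + R_n, \quad T_n := \sum_{i=1}^{n-1} V_i, \quad R_n := \sum_{i=1}^{n-1} (N_i - (n-2)V_i).
\]
The $V_i$'s form a stationary $1$-dependent sequence; explicit one- and two-dimensional integrals give $\var(V_1) = 1/18$ and $\cov(V_1,V_2) = 1/180$, so $(n-2)^2 \var(T_n) \sim n^3/15$. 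Each $M_i := N_i - (n-2) V_i$ has $\ee[M_i \mid U_i, U_{i+1}] = 0$, but the $M_i$'s are neither mutually uncorrelated nor orthogonal to the $V_i$'s, because they share the $U_k$'s for $k \notin \{i,i+1\}$. A careful enumeration of the resulting overlaps (organised around $\ee|I_i \cap I_j| = \int_0^1 [2x(1-x)]^2\,dx = 2/15$ for distant index pairs and the analogous value $1/6$ for adjacent pairs) yields $\var(R_n) \sim n^3/45$ and $2(n-2)\cov(T_n, R_n) \sim -2n^3/45$. The three contributions sum to the required $\var(S_n) = 2n^3/45 + O(n^2)$.

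For the CLT itself, I would use the method of moments. Using exchangeability of the uniform random permutation, any centred moment of $S_n$ expands into a finite sum of terms of the form $\ee\prod_\ell [\mathbf{1}(U_{k_\ell} \in I_{i_\ell}) - V_{i_\ell}]$, and each such expectation is an explicit polynomial in $n$ whose degree is controlled only by the combinatorial overlap pattern of the indices $\{(k_\ell, i_\ell)\}$. Identifying the dominant (``chain-like'') patterns bounds the $k$th cumulant of $S_n$ by $O(n^{k+1})$ for $k \ge 3$, which is $o(n^{3k/2})$; hence only the second-order Gaussian contribution survives after the $n^{-3/2}$ normalisation, and the variance computed above identifies the limit as $N(0,2/45)$. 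The main obstacle is the combinatorial bookkeeping: one has to track the precise cancellations that collapse the ``naive'' variance $1/15$ coming from $T_n$ alone down to the correct value $2/45$, and to verify that no analogous conspiracy at the third- or higher-cumulant level leaves a leftover term of order $n^{3k/2}$.
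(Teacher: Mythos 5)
Your proposal is a genuinely different route from the paper's. The paper handles Theorem \ref{cltthm0} as a special case of Theorem \ref{cltthm}, working with the circular statistic $D_n' = \sum_{i=1}^n \min\{R(\tau(i)), R(\tau(i+1))\}$ and the martingale-like decomposition $\beta_i := \min\{R(\tau(i)), R(\tau(i+1))\} - f(\tau(i))$, then deferring the moment-method CLT to \cite{cbl93}. You instead realize the rank permutation through i.i.d.\ uniforms, split $|r_{i+1}-r_i| = 1 + N_i$ with $N_i$ conditionally binomial, and decompose $S_n = (n-1) + (n-2)T_n + R_n$ into a $1$-dependent main term and a conditionally centered remainder; this is closer to a H\'ajek-projection viewpoint. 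Your variance arithmetic checks out: $\var(V_1)=1/18$, $\cov(V_1,V_2)=1/180$, $\ee|I_i\cap I_j| = \int_0^1[2x(1-x)]^2\,\d x = 2/15$, the cross-covariance $\cov(V_i,M_j) = -1/45$ for well-separated pairs, and the three contributions $3n^3/45$, $n^3/45$, $-2n^3/45$ do collapse to $2n^3/45$, giving $\var(\sqrt{n}\,\xi_n) \to 9\cdot 2/45 = 2/5$.

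The gap is in the last step. The assertion that the $k$th cumulant of $S_n$ is $O(n^{k+1})$ is plausible --- the two ``conditional mean-zero'' structures ($\ee[\1(U_k\in I_j)-V_j \mid \text{all } U_m,\, m\ne k]=0$ and $\ee[V_i-\tfrac13\mid U_m,\, m\notin\{i,i+1\}]=0$) do force enough index coincidences to kill the naive $n^{2k}$ count --- but the combinatorics you gesture at (``chain-like patterns'') is precisely the heart of the theorem, and it is not enough to observe that \emph{some} cancellation happens. One also has to handle mixed products of $(V_i-\tfrac13)$ and $(\1(U_k\in I_j)-V_j)$ factors uniformly in the overlap pattern and across all multi-index configurations, and a single overlooked pattern of order $n^{3k/2}$ would wreck the argument. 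The paper finesses exactly this by invoking the main theorem of \cite{cbl93}, where the bookkeeping is carried out; if you want your route to be self-contained you would need to do the equivalent work, e.g.\ by proving a graph-counting lemma for the admissible index hypergraphs and showing only tree/chain patterns contribute at leading order. Alternatively, you could truncate your argument after the variance identification and appeal to a ready-made CLT for finite-range-dependent arrays for the $(n-2)T_n$ piece together with a negligible-remainder argument for $R_n$; but $\var(R_n)\sim n^3/45$ is \emph{not} negligible relative to $\var(S_n)$, so that shortcut is unavailable, and the joint fluctuations of $T_n$ and $R_n$ have to be treated together, which again puts the combinatorics back on the table.
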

The above result is essentially a restatement the main theorem of~\cite{cbl93}, where a similar statistic for measuring the `presortedness' of a permutation was studied. We will see later in numerical examples that the convergence in Theorem~\ref{cltthm0} happens quite fast. It is roughly valid even for $n$ as small as $20$. 


If $X$ and $Y$ are independent but $Y$ is not continuous, then also $\sqrt{n}\xi_n$ converges in distribution to a centered Gaussian law, but the variance has a more complicated expression, and may depend on the law of $Y$. For each $t\in \rr$, let $F(t):= \pp(Y\le t)$ and $G(t):=\pp(Y\ge t)$. Let $\phi(y,y') := \min\{F(y), F(y')\}$. Define 
\begin{equation}\label{tauform}
\tau^2 = \frac{\ee \phi(Y_1,Y_2)^2 - 2\ee(\phi(Y_1,Y_2)\phi(Y_1,Y_3)) + (\ee\phi(Y_1,Y_2))^2}{(\ee G(Y)(1-G(Y)))^2},
\end{equation}
where $Y_1,Y_2,Y_3$ are independent copies of $Y$. The following theorem  generalizes Theorem \ref{cltthm0}.
\begin{thm}\label{cltthm}
Suppose that $X$ and $Y$ are independent. Then   $\sqrt{n}\xi_n(X,Y)$ converges to $N(0, \tau^2)$ in distribution as $n\to\infty$, where $\tau^2$ is given by the formula~\eqref{tauform} stated above. The number $\tau^2$ is strictly positive if $Y$ is not a constant, and equals $2/5$ if $Y$ is continuous. 
\end{thm}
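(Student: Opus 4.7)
The strategy is to condition on $\{Y_1,\ldots,Y_n\}$ and exploit that under independence of $X$ and $Y$ (together with the uniform tie-breaking in the definition of $\xi_n$) the sorted-by-$X$ sequence $(Y_{(1)},\ldots,Y_{(n)})$ is a uniformly random permutation $\sigma$ of the data. The key algebraic identity is
\[
|r_{i+1}-r_i| = \sum_{j=1}^n K(Y_{(i)},Y_{(i+1)},Y_{(j)}), \qquad K(a,b,c) := \mathbf{1}\{\min(a,b) < c \le \max(a,b)\},
\]
which holds because all $j$-indexed contributions to $r_{i+1}-r_i$ share the same sign (nonnegative if $Y_{(i)}\le Y_{(i+1)}$, nonpositive otherwise), so the absolute value moves inside the sum. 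Thus $S_n := \sum_{i=1}^{n-1}|r_{i+1}-r_i|$ is a sum over ordered triples $(\sigma(i),\sigma(i+1),\sigma(j))$ drawn from a uniform random permutation, to which the machinery of CLTs for permutation statistics applies.

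First I would compute the conditional mean $\ee[S_n \mid Y]$ and conditional variance $\var(S_n \mid Y)$ by exchangeability: these reduce to first- and second-order $U$-statistics in the empirical CDF $\hat F_n$, whose almost-sure limits involve $\ee\phi(Y_1,Y_2)$, $\ee\phi(Y_1,Y_2)^2$, and $\ee[\phi(Y_1,Y_2)\phi(Y_1,Y_3)]$ with $\phi(y,y') = \min(F(y),F(y'))$; in parallel, $n^{-2}\sum_i l_i(n-l_i) \to \ee[G(Y)(1-G(Y))]$ almost surely, furnishing the denominator of $\tau^2$. The technical core is then the conditional CLT for $S_n$ given $Y$: the natural tool is Hoeffding's combinatorial CLT applied after a Hoeffding-type decomposition of $S_n$ into symmetric projections of low order, or alternatively a Doob-martingale CLT along the progressive reveal $\sigma(1),\sigma(2),\ldots$. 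Either route requires careful control of the local dependence between consecutive pairs $(\sigma(i),\sigma(i+1))$ and $(\sigma(i+1),\sigma(i+2))$, together with boundary terms where $j\in\{i,i+1\}$; discrete ties in $Y$ add combinatorial cases without altering the overall scheme. Slutsky then promotes the conditional CLT, together with the LLN for the denominator, to the unconditional statement $\sqrt{n}\xi_n \to N(0,\tau^2)$.

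The two remaining claims about $\tau^2$ then fall out by direct calculation. For continuous $Y$, $F(Y)\sim\mathrm{Unif}[0,1]$ gives $\ee\phi(Y_1,Y_2) = 1/3$, $\ee\phi(Y_1,Y_2)^2 = 1/6$, $\ee[\phi(Y_1,Y_2)\phi(Y_1,Y_3)] = 2/15$, and $\ee[G(Y)(1-G(Y))] = 1/6$, so $\tau^2 = 2/5$, recovering Theorem~\ref{cltthm0}. Strict positivity of $\tau^2$ for nonconstant $Y$ follows from the representation $\phi(y,y') = \int \mathbf{1}\{u\le y\}\,\mathbf{1}\{u\le y'\}\,d\mu(u)$, which after Fubini yields
\[
\ee\phi(Y_1,Y_2)^2 - 2\,\ee[\phi(Y_1,Y_2)\phi(Y_1,Y_3)] + (\ee\phi(Y_1,Y_2))^2 = \int \int \bigl[G(y\vee y') - G(y)G(y')\bigr]^2\,d\mu(y)\,d\mu(y'),
\]
a manifestly nonnegative sum of squares, which vanishes only when $G(y) = G(y)^2$ for $\mu$-a.e.~$y$, i.e.\ when $Y$ is almost surely constant. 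I expect the main obstacle to be the conditional CLT: tracking its variance through the combinatorics of consecutive-pair overlaps and ties in $\hat F_n$ so that the leading-order coefficient recovers the stated formula for $\tau^2$ in the general (discrete) case.
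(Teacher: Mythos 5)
Your strategy matches the paper's at a structural level: condition on $Y_1,\ldots,Y_n$ so that the sort order becomes a uniform permutation $\tau$ independent of the $Y$'s, compute the conditional mean and variance of the relevant permutation statistic, identify the almost-sure limits as expectations of products of $\phi(y,y')=\min\{F(y),F(y')\}$, prove a conditional CLT, and close with Slutsky. The paper works with $D_n=\sum_{i}\min\{R(\tau(i)),R(\tau(i+1))\}$ and converts to $\sum|r_{i+1}-r_i|$ via $\min\{a,b\}=\tfrac12(a+b-|a-b|)$; you start directly from $\sum|r_{i+1}-r_i|$ and the kernel $K$, which is the same object after this elementary rewriting, and your limiting constants for continuous $Y$ are all correct.

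Two places where you depart from the paper. (i) For strict positivity of $\tau^2$, the paper writes the numerator as the Hoeffding residual $\ee\bigl(\phi(Y,Y')-\psi(Y)-\psi(Y')+m\bigr)^2$ and argues from $\min$'s over growing samples that vanishing forces $F(Y)$ constant. Your Fubini identity
\[
\ee\phi(Y_1,Y_2)^2-2\ee\bigl[\phi(Y_1,Y_2)\phi(Y_1,Y_3)\bigr]+\bigl(\ee\phi(Y_1,Y_2)\bigr)^2=\iint\bigl[G(u\vee v)-G(u)G(v)\bigr]^2\,d\mu(u)\,d\mu(v)
\]
is correct and arguably cleaner; however your next sentence (``vanishes only when $G(y)=G(y)^2$ for $\mu$-a.e.\ $y$'') jumps to the diagonal, which is $\mu\otimes\mu$-null in general. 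The integral vanishing gives $G(u\vee v)=G(u)G(v)$ for $\mu\otimes\mu$-a.e.\ $(u,v)$, and one still has to extract a contradiction from nonconstancy of $Y$ (e.g.\ choose two support points $a<b$, argue $G\equiv1$ near $b$, which is impossible since $\pp(Y<b-\ve)>0$). The idea goes through, but as written it papers over the off-diagonal reduction. (ii) The CLT itself is where the real work is, and here the proposal is under-specified. Hoeffding's combinatorial CLT concerns linear permutation statistics $\sum_i a_{i,\sigma(i)}$; your statistic is $\sum_i b_{\sigma(i),\sigma(i+1)}$, a ``consecutive-pair'' functional of the permutation, for which the classical combinatorial CLT does not apply directly, and the Hoeffding decomposition of such a statistic does not reduce it to a linear one. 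The paper is explicit that this is the crux and handles it by mimicking the moment-method argument of \cite{cbl93}, which was built for exactly this class of ``oscillation of permutation'' statistics: compute $\ee'[(\tilde D_n')^k]$ for each $k$, show each converges to the Gaussian moment, and finish with uniform integrability. Your alternative route via a Doob martingale along the revelation $\tau(1),\tau(2),\ldots$ is plausible in spirit (and martingale CLTs have been used for related permutation functionals), but you would need to verify the conditional Lindeberg condition and the convergence of the quadratic variation to $\sigma^2$, neither of which is sketched; since you yourself flag this as the main obstacle, the proposal should be read as a correct plan whose central technical step remains to be supplied.
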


The simple reason why $\tau^2$ does not depend on the law of $Y$ if $Y$ is continuous is that in this case $F(Y)$ and $G(Y)$ are Uniform$[0,1]$ random variables, which implies that the expectations in \eqref{tauform} do not depend on the law of $Y$. If $Y$ is not continuous, then $\tau^2$ may depend on the law of $Y$. For example, it is not hard to show that if $Y$ is a Bernoulli$(1/2)$ random variable, then $\tau^2=1$. Fortunately, if $Y$ is not continuous, there is a simple way to estimate $\tau^2$ from the data using the estimator
\[
\hat{\tau}^2_n = \frac{a_n-2b_n+c_n^2}{d_n^2},
\]
where  $a_n$, $b_n$, $c_n$ and $d_n$ are defined as follows. For each $i$, let 
\begin{equation}\label{rldef}
R(i) := \#\{j: Y_j\le Y_i\}, \ \ \ L(i) := \#\{j: Y_j\ge Y_i\}. 
\end{equation}
Let $u_1\le u_2\le\cdots \le u_n$ be an increasing rearrangement of $R(1),\ldots, R(n)$. Let $v_i := \sum_{j=1}^i u_j$ for $i=1,\ldots,n$.  Define
\begin{align*}
&a_n := \frac{1}{n^4}\sum_{i=1}^n (2n-2i+1) u_i^2, \ \ \ b_n := \frac{1}{n^5}\sum_{i=1}^n (v_i + (n-i)u_i)^2, \\
&c_n := \frac{1}{n^3}\sum_{i=1}^n (2n-2i+1)u_i, \ \ \  d_n := \frac{1}{n^3}\sum_{i=1}^n L(i)(n-L(i)). 
\end{align*}
Then we have the following result.
\begin{thm}\label{estthm}
The estimator $\hat{\tau}_n^2$ can be computed in time $O(n\log n)$, and converges to $\tau^2$ almost surely as $n\to\infty$. 
\end{thm}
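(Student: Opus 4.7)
The plan is to show that $a_n, b_n, c_n, d_n$ are V-statistics in disguise and then apply Glivenko--Cantelli together with the strong law. For the complexity claim, a single sort of the $Y_i$'s produces $R(i)$, $L(i)$, and the sorted sequence $u_1 \le \cdots \le u_n$ in $O(n\log n)$ time; one linear pass computes the partial sums $v_i$; and each of $a_n, b_n, c_n, d_n$ is then a single linear-time summation, for a total cost of $O(n\log n)$.

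The heart of the argument is an algebraic bookkeeping step. For any real numbers $w_1,\ldots,w_n$ with order statistics $w_{(1)} \le \cdots \le w_{(n)}$, a short count of the ordered pairs $(i,j)$ with $\min\{w_i,w_j\}\ge w_{(r)}$ gives
\[
\sum_{r=1}^n (2n-2r+1)\, w_{(r)}^k \;=\; \sum_{i,j=1}^n \bigl(\min\{w_i,w_j\}\bigr)^k, \qquad k=1,2,
\]
and a direct calculation yields the identity $v_i+(n-i)u_i=\sum_{j=1}^n \min\{u_j,u_i\}$ for any sorted sequence. Applying these with $w_i=R(i)$, and writing $R(i)=nF_n(Y_i)$, $L(i)=nG_n(Y_i)$ where $F_n(t):=n^{-1}\#\{j:Y_j\le t\}$ and $G_n(t):=n^{-1}\#\{j:Y_j\ge t\}$, one obtains
\begin{align*}
a_n &= \frac{1}{n^2}\sum_{i,j=1}^n \bigl(\min\{F_n(Y_i),F_n(Y_j)\}\bigr)^2,\\
b_n &= \frac{1}{n^3}\sum_{i,j,k=1}^n \min\{F_n(Y_i),F_n(Y_j)\}\min\{F_n(Y_i),F_n(Y_k)\},\\
c_n &= \frac{1}{n^2}\sum_{i,j=1}^n \min\{F_n(Y_i),F_n(Y_j)\},\\
d_n &= \frac{1}{n}\sum_{i=1}^n G_n(Y_i)\bigl(1-G_n(Y_i)\bigr).
\end{align*}

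From this representation the convergence is routine. Glivenko--Cantelli supplies $\sup_t |F_n(t)-F(t)|\to 0$ and $\sup_t |G_n(t)-G(t)|\to 0$ almost surely, and since $(a,b)\mapsto \min\{a,b\}^k$ is Lipschitz on the unit square, replacing $F_n,G_n$ by $F,G$ throughout introduces only a uniformly vanishing error. What remains is a V-statistic with bounded kernel in each case, and the strong law for such V-statistics delivers
\[
a_n\to \E\,\phi(Y_1,Y_2)^2,\quad b_n\to \E[\phi(Y_1,Y_2)\phi(Y_1,Y_3)],\quad c_n\to \E\,\phi(Y_1,Y_2),\quad d_n\to \E[G(Y)(1-G(Y))].
\]
Since $Y$ is not almost surely constant, the denominator limit $\E[G(Y)(1-G(Y))]$ is strictly positive, and the continuous mapping theorem yields $\hat\tau_n^2\to\tau^2$ almost surely. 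The step most prone to error is the reduction of $b_n$: the three-index sum combined with the need to trace quantities through the sort permutation via $v_i+(n-i)u_i=\sum_j\min\{u_j,u_i\}$ makes it the point where indexing slips are easiest to make; everything else is standard.
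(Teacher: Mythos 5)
Your proof is correct and uses the same key algebraic identities as the paper: the pair-counting formula $\sum_r (2n-2r+1)u_r^k = \sum_{i,j}\min\{u_i,u_j\}^k$ and the identity $v_i+(n-i)u_i=\sum_j\min\{u_j,u_i\}$ that turn $a_n$, $b_n$, $c_n$ into V-statistics, together with the $O(n\log n)$ bound from sorting and cumulative sums. The only (immaterial) difference is that the paper establishes the almost-sure convergence of the V-statistic $a_n-2b_n+c_n^2$ by recognizing it as $V_n/n^3$ and citing Lemma~\ref{vnlim} (which itself uses Glivenko--Cantelli plus a bounded-difference concentration inequality), whereas you replace $F_n$ by $F$ uniformly and invoke the strong law for V-statistics with bounded kernels directly; both routes are standard and of comparable length.
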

I do not have the asymptotic theory for $\xi_n(X,Y)$ when $X$ and $Y$ are dependent.  Simulation results  presented in Section~\ref{asympsec} indicate that even under dependence, $\sqrt{n}(\xi_n-\xi)$ is asymptotically normal. 

One may also ask about the asymptotic null distribution of the symmetrized statistic $\max\{\xi_n(X,Y), \xi_n(Y,X)\}$. It is likely that under independence, this behaves like the maximum of a pair of correlated normal random variables. At this time I do not have a proof of this claim, nor a conjecture about the parameters of this distribution. Of course, it is easy to carry out a permutation test for independence using the symmetrized statistic. 

The rest of the paper is organized as follows. We begin with an amusing  application of $\xi_n$ to Galton's peas data in Section \ref{galtonsec}. Various simulation results are presented in Section \ref{numsec}. An application to a famous gene expression dataset is given in Section \ref{spellmansec}. The inadequacy of  MIC and maximal correlation for measuring the strength of relationship between $X$ and $Y$ is proved in Section \ref{micsec}. A summary of the advantages and disadvantages of using $\xi_n$ is given in Section \ref{adsec}. A sketch of the proof of Theorem \ref{mainthm} is given in Section~\ref{sketchsec}. The remaining sections are devoted to proofs.

\section{Example: Galton's peas revisited}\label{galtonsec}

\begin{figure}[t]
\includegraphics[width = .5\textwidth]{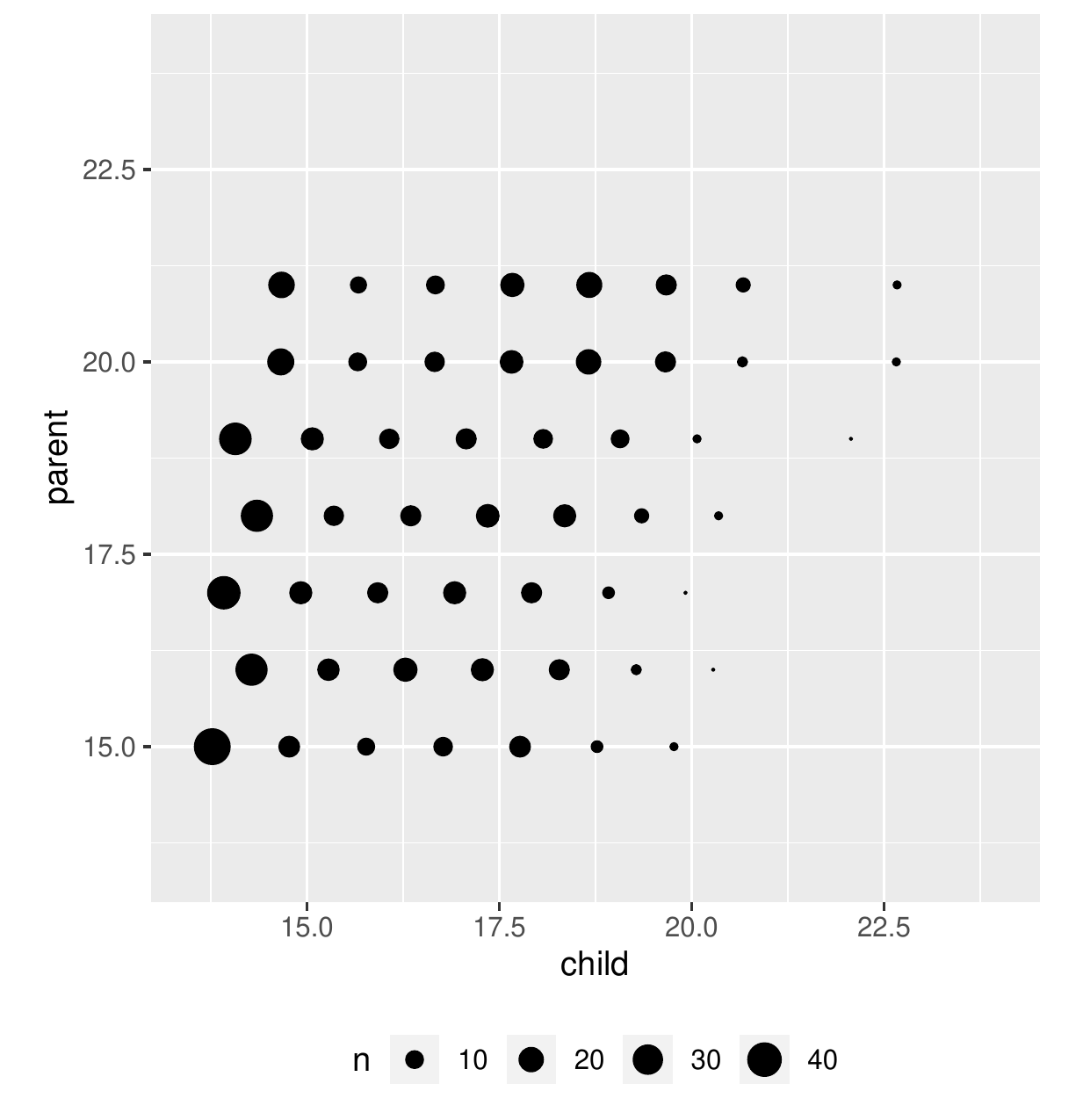}
\caption{Scatterplot of Galton's peas data. Thickness of a dot represents the number of data points at that location. (Figure courtesy of Susan Holmes.)\label{figpeas}}
\end{figure}

Sir Francis Galton's peas data, collected in 1875, is one of the earliest and most famous datasets in the history of statistics. The data consists of $700$ observations of mean diameters of sweet peas in mother plants and daughter plants.  The exact process of data collection was not properly recorded; all we know is that Galton sent out packets of seeds to friends, who planted the seeds, grew the plants, and sent the seeds from the new plants back to Galton (see \cite[p.~296]{stigler86} for further details). The dataset is freely available as the `peas' data frame in the psych package in R.

Let $X$ be the mean diameter of peas in a mother plant, and $Y$ be the mean diameter of peas in the daughter plant. As already observed by Pearson long ago, the correlation between $X$ and $Y$ is around $0.35$. The $X_i$'s have many ties in this data, which means that $\xi_n(X,Y)$ is random due to the random breaking of ties. Averaging over ten thousand simulations gave a value close to $0.11$ for $\xi_n(X,Y)$. The P-value for the test of independence using Theorems \ref{cltthm} and \ref{estthm} came out to be less than $0.0001$, so $\xi_n(X,Y)$ succeeded in the task of detecting dependence between $X$ and $Y$.

\begin{table}[t]
\centering
\begin{tiny}
\caption{Contingency table for Galton's peas data.\label{galton}}
\begin{tabular}{rrrrrrrr}
  \toprule
  & \multicolumn{7}{c}{Parent}\\
  \cmidrule{2-8}
 Child & 15 & 16 & 17 & 18 & 19 & 20 & 21 \\ 
  \midrule
13.77 &  46 &   0 &   0 &   0 &   0 &   0 &   0 \\ 
  13.92 &   0 &   0 &  37 &   0 &   0 &   0 &   0 \\ 
  14.07 &   0 &   0 &   0 &   0 &  35 &   0 &   0 \\ 
  14.28 &   0 &  34 &   0 &   0 &   0 &   0 &   0 \\ 
  14.35 &   0 &   0 &   0 &  34 &   0 &   0 &   0 \\ 
  14.66 &   0 &   0 &   0 &   0 &   0 &  23 &   0 \\ 
  14.67 &   0 &   0 &   0 &   0 &   0 &   0 &  22 \\ 
  14.77 &  14 &   0 &   0 &   0 &   0 &   0 &   0 \\ 
  14.92 &   0 &   0 &  16 &   0 &   0 &   0 &   0 \\ 
  15.07 &   0 &   0 &   0 &   0 &  16 &   0 &   0 \\ 
  15.28 &   0 &  15 &   0 &   0 &   0 &   0 &   0 \\ 
  15.35 &   0 &   0 &   0 &  12 &   0 &   0 &   0 \\ 
  15.66 &   0 &   0 &   0 &   0 &   0 &  10 &   0 \\ 
  15.67 &   0 &   0 &   0 &   0 &   0 &   0 &   8 \\ 
  15.77 &   9 &   0 &   0 &   0 &   0 &   0 &   0 \\ 
  15.92 &   0 &   0 &  13 &   0 &   0 &   0 &   0 \\ 
  16.07 &   0 &   0 &   0 &   0 &  12 &   0 &   0 \\ 
  16.28 &   0 &  18 &   0 &   0 &   0 &   0 &   0 \\ 
  16.35 &   0 &   0 &   0 &  13 &   0 &   0 &   0 \\ 
  16.66 &   0 &   0 &   0 &   0 &   0 &  12 &   0 \\ 
  16.67 &   0 &   0 &   0 &   0 &   0 &   0 &  10 \\ 
  16.77 &  11 &   0 &   0 &   0 &   0 &   0 &   0 \\ 
  16.92 &   0 &   0 &  16 &   0 &   0 &   0 &   0 \\ 
  17.07 &   0 &   0 &   0 &   0 &  13 &   0 &   0 \\ 
  17.28 &   0 &  16 &   0 &   0 &   0 &   0 &   0 \\ 
  17.35 &   0 &   0 &   0 &  17 &   0 &   0 &   0 \\ 
   \bottomrule
\end{tabular}
\hskip.3in
\begin{tabular}{rrrrrrrr}
  \toprule
  & \multicolumn{7}{c}{Parent}\\
  \cmidrule{2-8}
 Child & 15 & 16 & 17 & 18 & 19 & 20 & 21 \\ 
  \midrule
  17.66 &   0 &   0 &   0 &   0 &   0 &  17 &   0 \\ 
  17.67 &   0 &   0 &   0 &   0 &   0 &   0 &  18 \\ 
  17.77 &  14 &   0 &   0 &   0 &   0 &   0 &   0 \\ 
  17.92 &   0 &   0 &  13 &   0 &   0 &   0 &   0 \\ 
  18.07 &   0 &   0 &   0 &   0 &  11 &   0 &   0 \\ 
  18.28 &   0 &  13 &   0 &   0 &   0 &   0 &   0 \\ 
  18.35 &   0 &   0 &   0 &  16 &   0 &   0 &   0 \\ 
  18.66 &   0 &   0 &   0 &   0 &   0 &  20 &   0 \\ 
  18.67 &   0 &   0 &   0 &   0 &   0 &   0 &  21 \\ 
  18.77 &   4 &   0 &   0 &   0 &   0 &   0 &   0 \\ 
  18.92 &   0 &   0 &   4 &   0 &   0 &   0 &   0 \\ 
  19.07 &   0 &   0 &   0 &   0 &  10 &   0 &   0 \\ 
  19.28 &   0 &   3 &   0 &   0 &   0 &   0 &   0 \\ 
  19.35 &   0 &   0 &   0 &   6 &   0 &   0 &   0 \\ 
  19.66 &   0 &   0 &   0 &   0 &   0 &  13 &   0 \\ 
  19.67 &   0 &   0 &   0 &   0 &   0 &   0 &  13 \\ 
  19.77 &   2 &   0 &   0 &   0 &   0 &   0 &   0 \\ 
  19.92 &   0 &   0 &   1 &   0 &   0 &   0 &   0 \\ 
  20.07 &   0 &   0 &   0 &   0 &   2 &   0 &   0 \\ 
  20.28 &   0 &   1 &   0 &   0 &   0 &   0 &   0 \\ 
  20.35 &   0 &   0 &   0 &   2 &   0 &   0 &   0 \\ 
  20.66 &   0 &   0 &   0 &   0 &   0 &   3 &   0 \\ 
  20.67 &   0 &   0 &   0 &   0 &   0 &   0 &   6 \\ 
  22.07 &   0 &   0 &   0 &   0 &   1 &   0 &   0 \\ 
  22.66 &   0 &   0 &   0 &   0 &   0 &   2 &   0 \\ 
  22.67 &   0 &   0 &   0 &   0 &   0 &   0 &   2 \\ 
   \bottomrule
\end{tabular}
\end{tiny}
\end{table}

Thus far, there is nothing surprising.  The real surprise, however, was that the value of $\xi_n(Y,X)$ (instead of $\xi_n(X,Y)$) turned out to be approximately $0.92$ (and it appeared to be independent of the tie-breaking process). By Theorem~\ref{mainthm}, this means that $X$ is close to being  a noiseless function of $Y$. From the scatterplot of the data (Figure~\ref{figpeas}), it is not clear how this can be possible.  The mystery is resolved by looking at the contingency table of the data (Table~\ref{galton}). Each row of the table corresponds to a value of $Y$, and each column corresponds to a value of $X$. We notice that each column has multiple cells with nonzero counts, meaning that for each value of $X$ there are many different values of $Y$ in the data. On the other hand, each row in the table contains exactly one cell with a nonzero (and often quite large) count. That is, for any value of $Y$, every value of $X$ in the data is the same.

For example, among all mother plants with mean diameter $15$, there were $46$ cases where the daughter plant had diameter $13.77$, $14$ had diameter $14.77$, $11$ had diameter $16.77$, $14$ had diameter $17.77$, and $4$ had diameter $18.77$. On the other hand, for all $46$ daughter plants in the data  with  diameter $13.77$, the mother plants had diameter $15$. Similarly, for all $34$ daughter plants with diameter $14.28$, the mother plants had diameter $16$. 

Common sense suggests that the reason behind this strange phenomenon is surely some quirk of the data collection or recording method, and not some profound biological fact. (It is probably not a simple rounding effect, though; for instance, in all $46$ cases where $Y = 13.77$, we have $X=15$, but for all $37$ cases where $Y=13.92$, which is only slightly different than $13.77$, we have $X=17$.) However, if we imagine that the values recorded in the data are the exact values that were measured and the observations were i.i.d.~(neither of which is exactly true, as I learned from Steve Stigler), then looking at Table \ref{galton} there is no way to escape the conclusion that the mean diameter of peas in the mother plant can be exactly predicted with considerable certainty by the mean diameter of the peas in the daughter plant (but not the other way around). The coefficient $\xi_n(Y,X)$ discovers this fact numerically by attaining a value close to $1$. It is probable that this feature of Galton's peas data has been noted before, but if so, it is certainly hard to find. I could not find any reference where this is mentioned, in spite of much effort.

\section{Simulation results}\label{numsec}
The goal of this section is to investigate the performance of $\xi_n$ using  numerical simulations, and compare it to other methods. We compare general performance, run times, and powers for testing independence. 

\subsection{General performance, equitability and generality}

\begin{figure}[t]
\centering
\begin{subfigure}[b]{.3\textwidth}
\centering
\includegraphics[width = \textwidth]{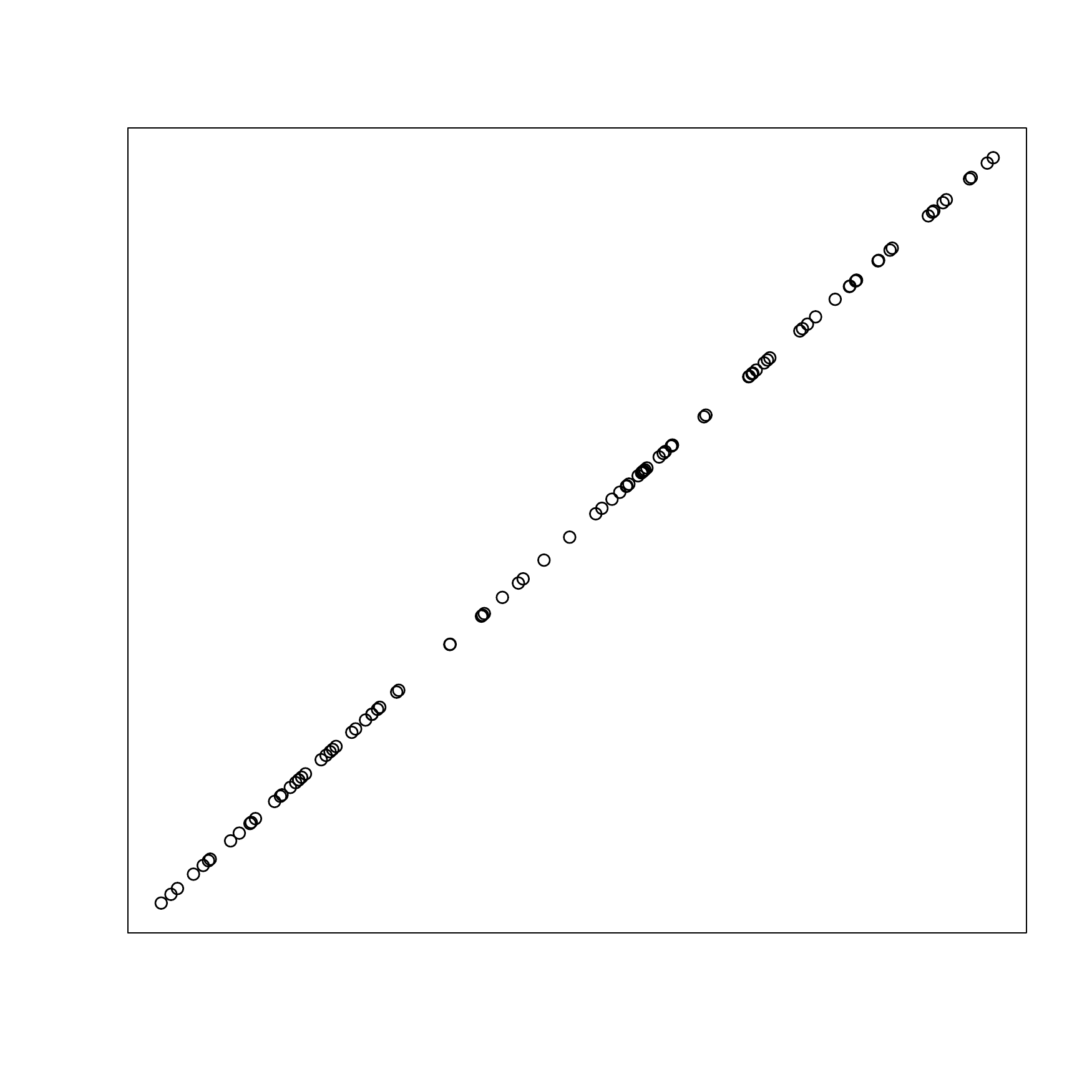}
\caption{$\xi_n = 0.970$. \label{fig0a}}
\end{subfigure}
\begin{subfigure}[b]{.3\textwidth}
\centering
\includegraphics[width = \textwidth]{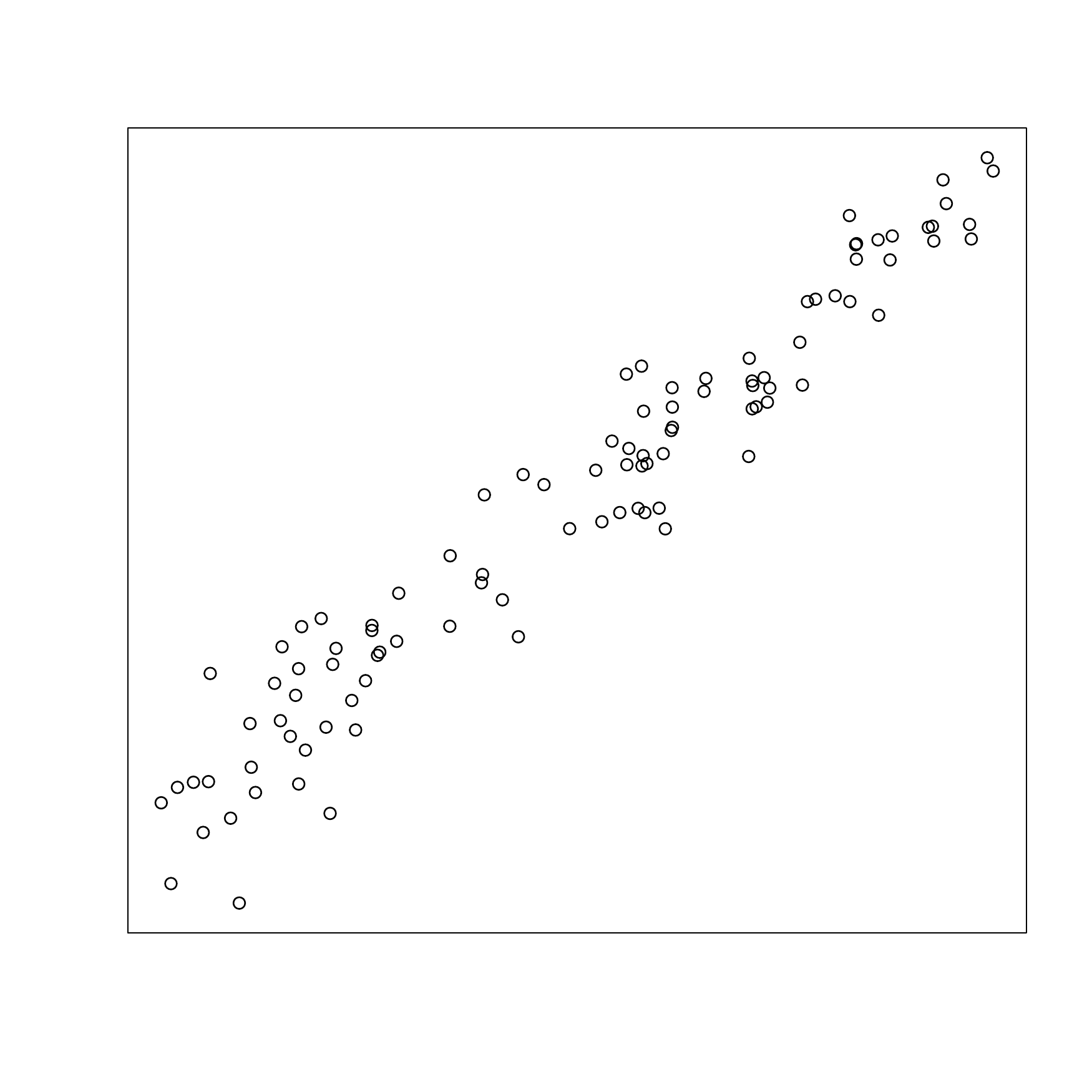}
\caption{$\xi_n =  0.732$. \label{fig0b}}
\end{subfigure}
\begin{subfigure}[b]{.3\textwidth}
\centering
\includegraphics[width = \textwidth]{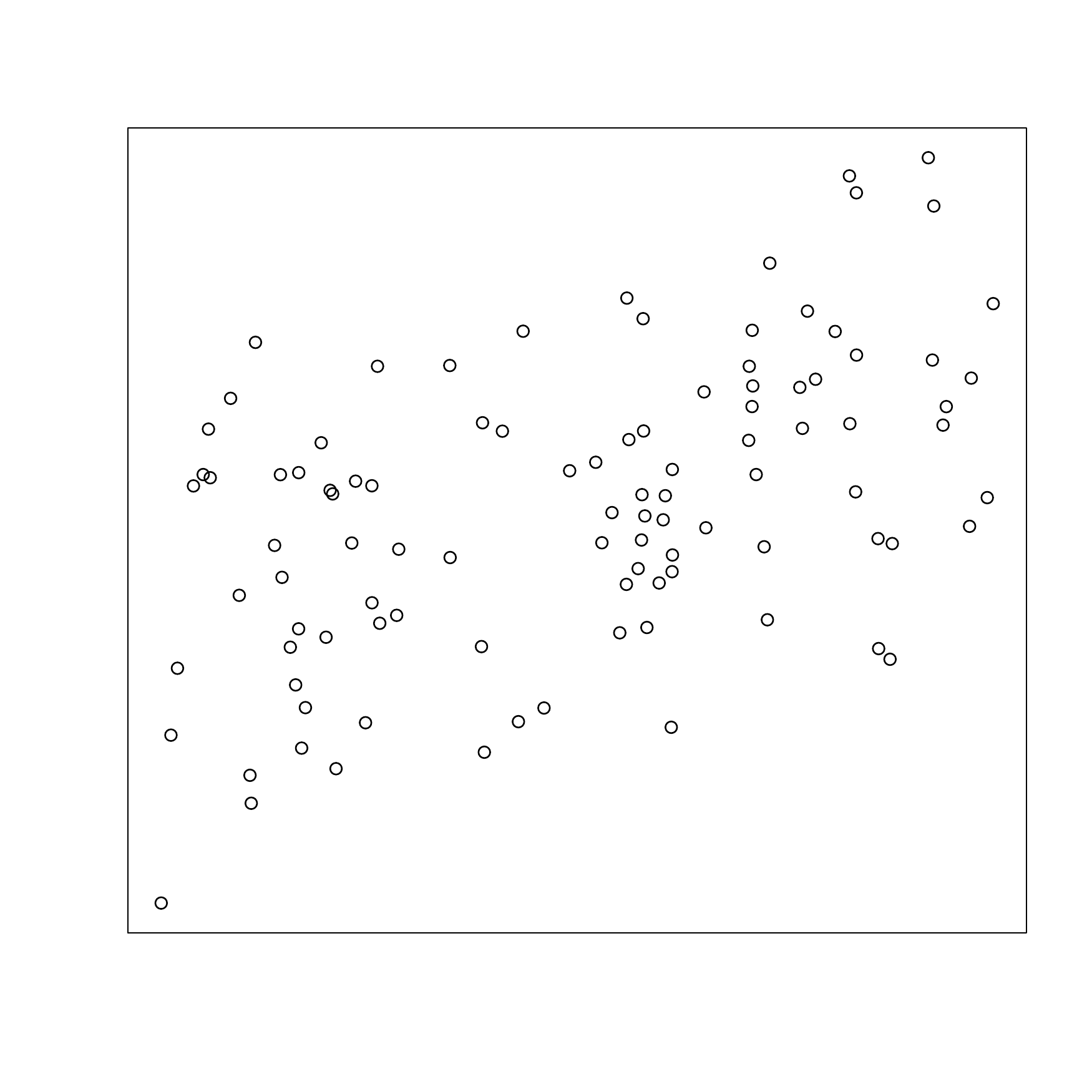}
\caption{$\xi_n =  0.145$. \label{fig0c}}
\end{subfigure}
\begin{subfigure}[b]{.3\textwidth}
\centering
\includegraphics[width = \textwidth]{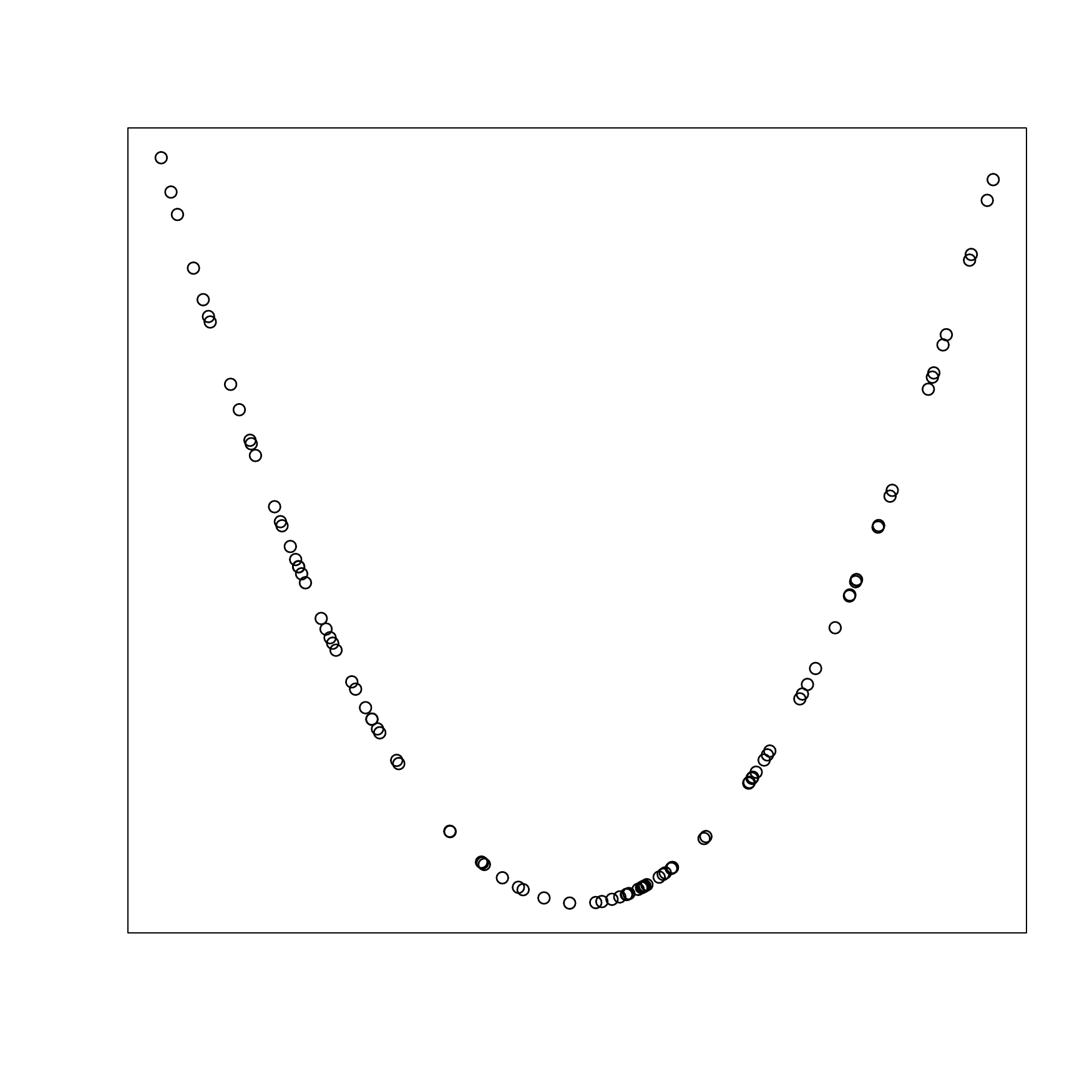}
\caption{$\xi_n = 0.941$. \label{fig0d}}
\end{subfigure}
\begin{subfigure}[b]{.3\textwidth}
\centering
\includegraphics[width = \textwidth]{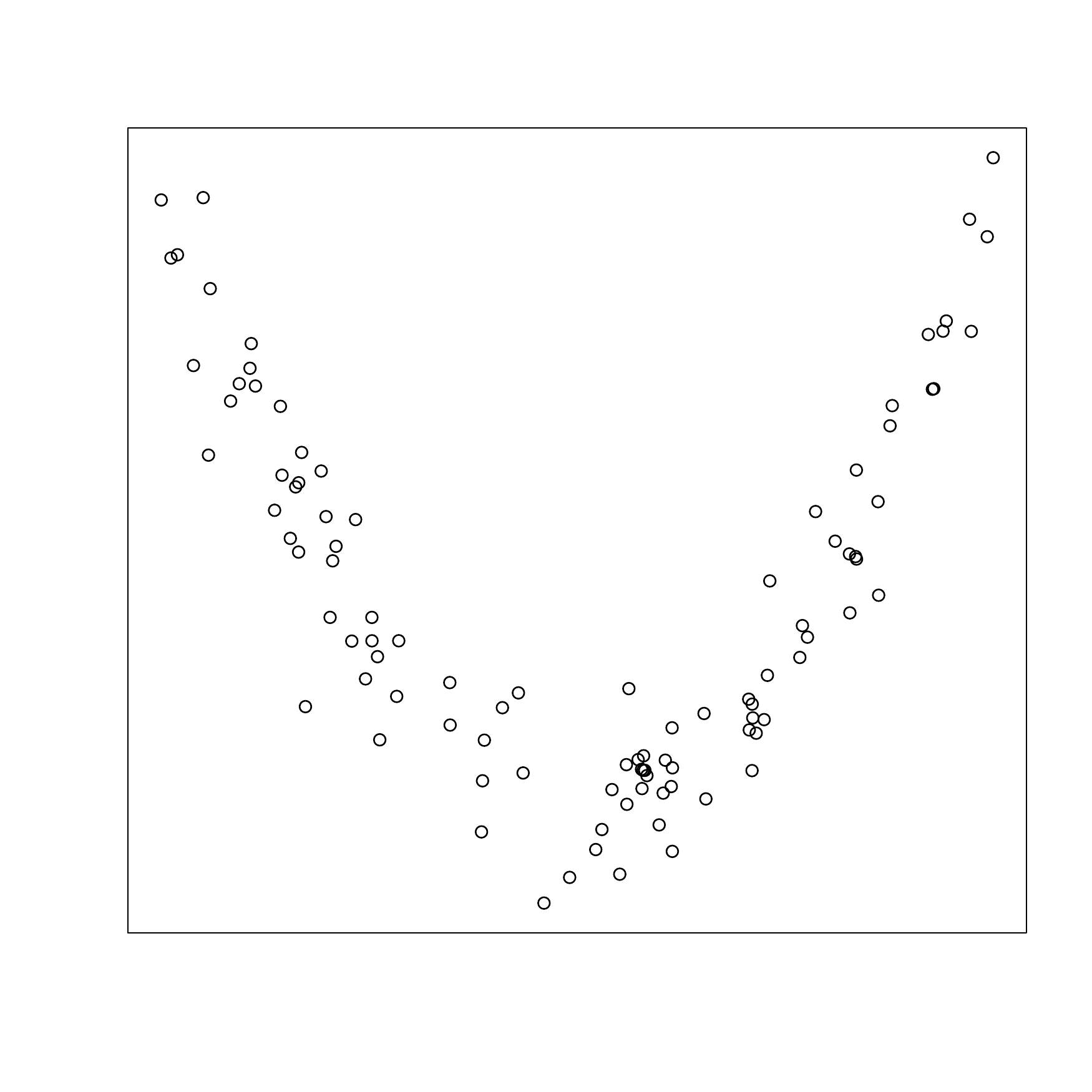}
\caption{$\xi_n = 0.684$. \label{fig0e}}
\end{subfigure}
\begin{subfigure}[b]{.3\textwidth}
\centering
\includegraphics[width = \textwidth]{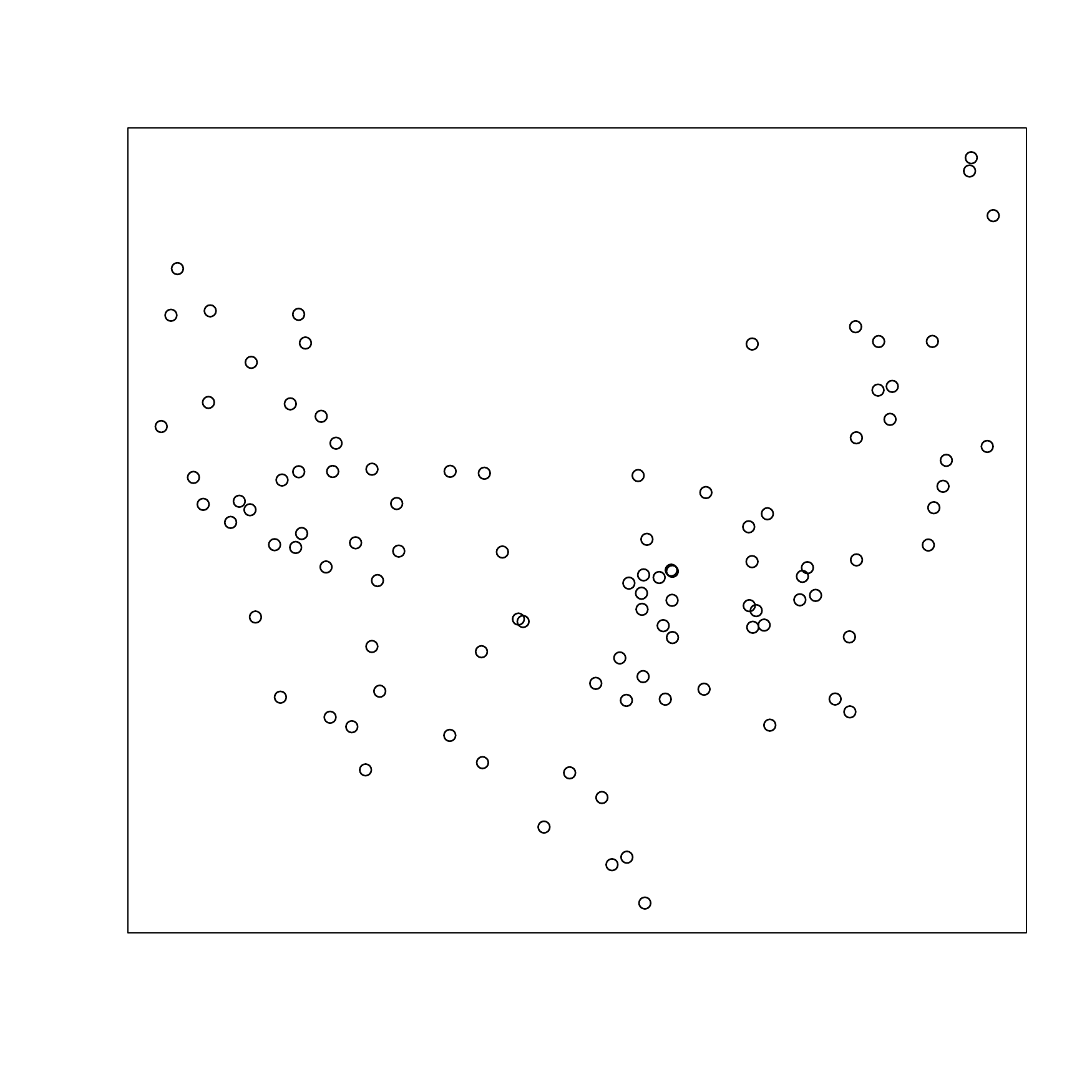}
\caption{$\xi_n = 0.265$. \label{fig0f}}
\end{subfigure}
\begin{subfigure}[b]{.3\textwidth}
\centering
\includegraphics[width = \textwidth]{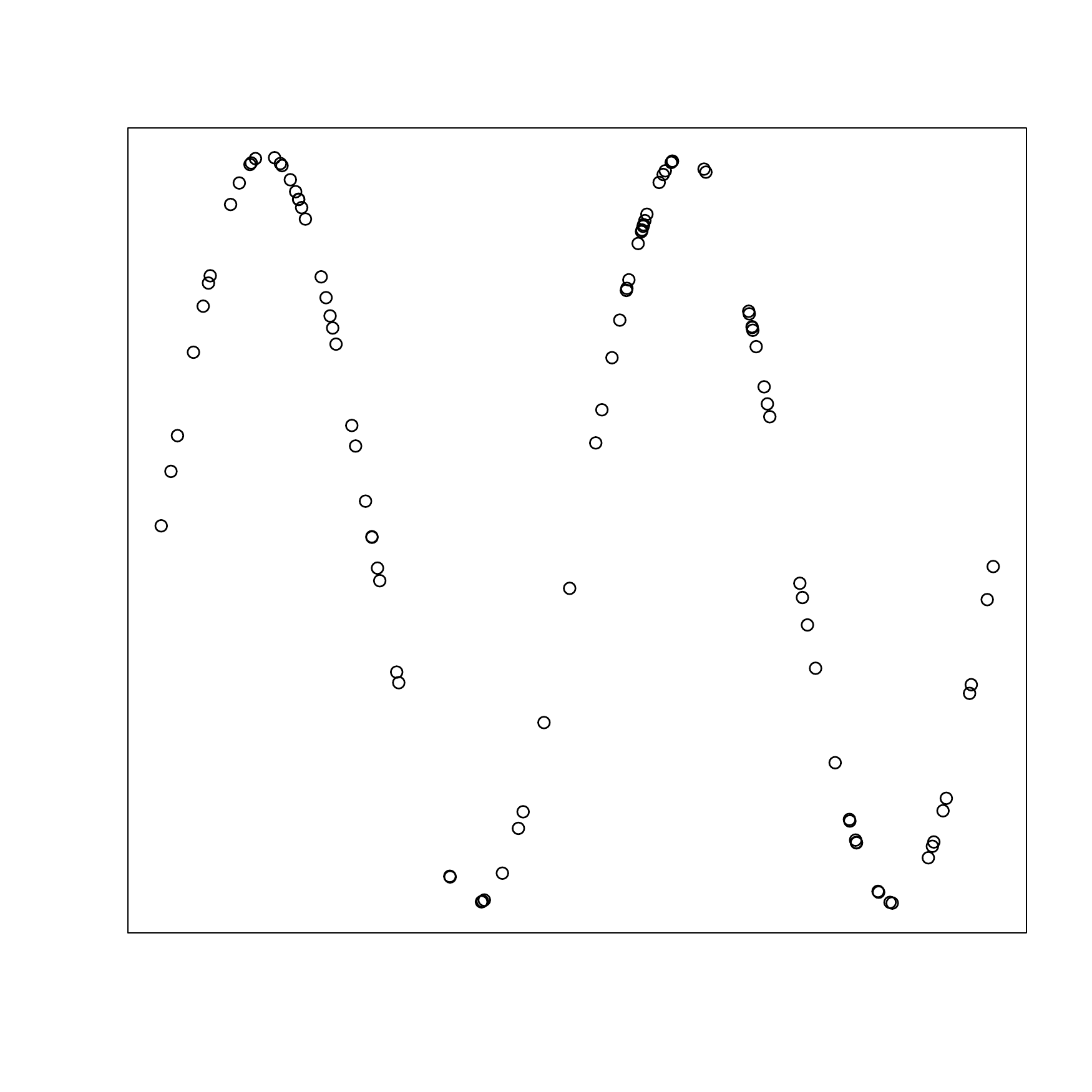}
\caption{$\xi_n = 0.885$. \label{fig0g}}
\end{subfigure}
\begin{subfigure}[b]{.3\textwidth}
\centering
\includegraphics[width = \textwidth]{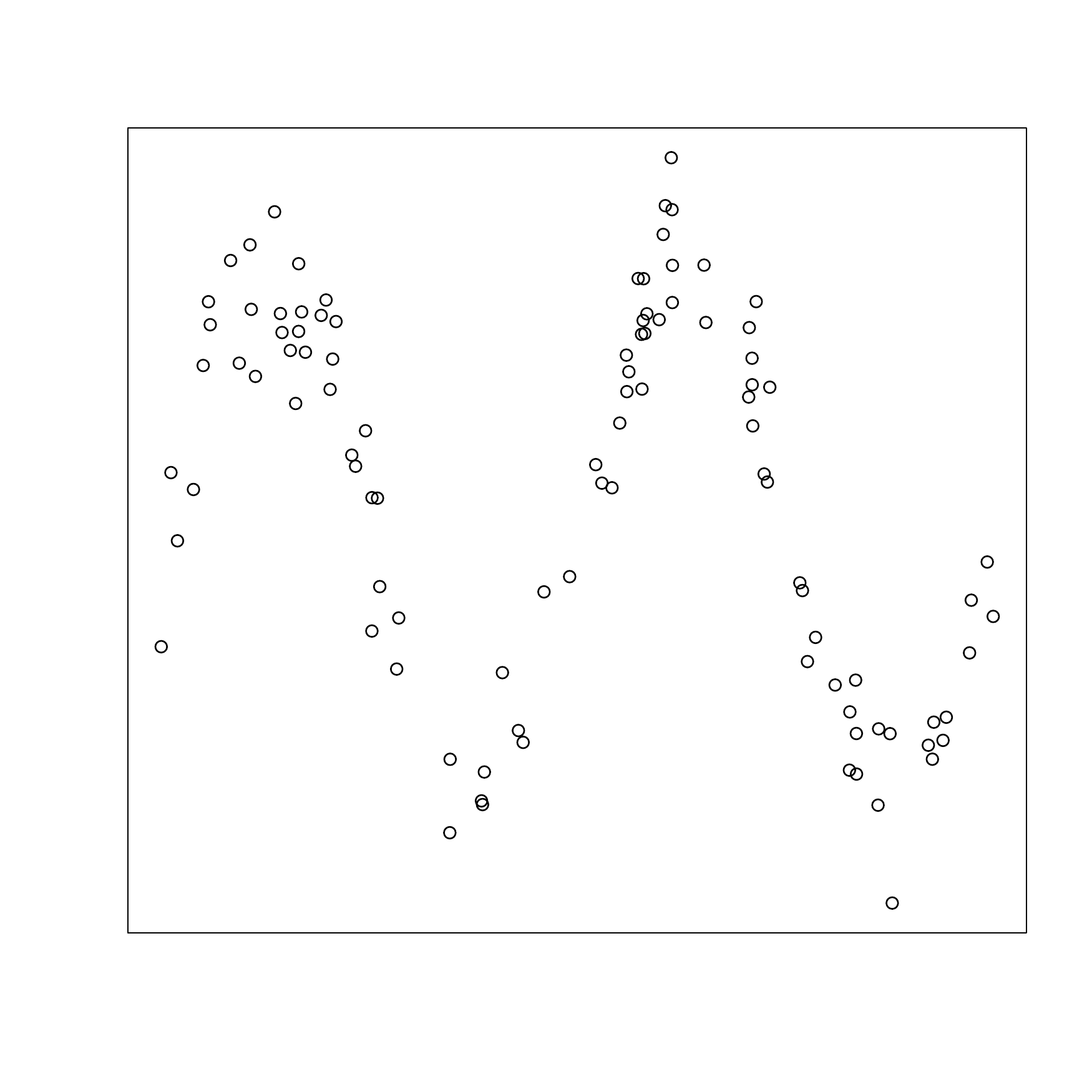}
\caption{$\xi_n = 0.650$. \label{fig0h}}
\end{subfigure}
\begin{subfigure}[b]{.3\textwidth}
\centering
\includegraphics[width = \textwidth]{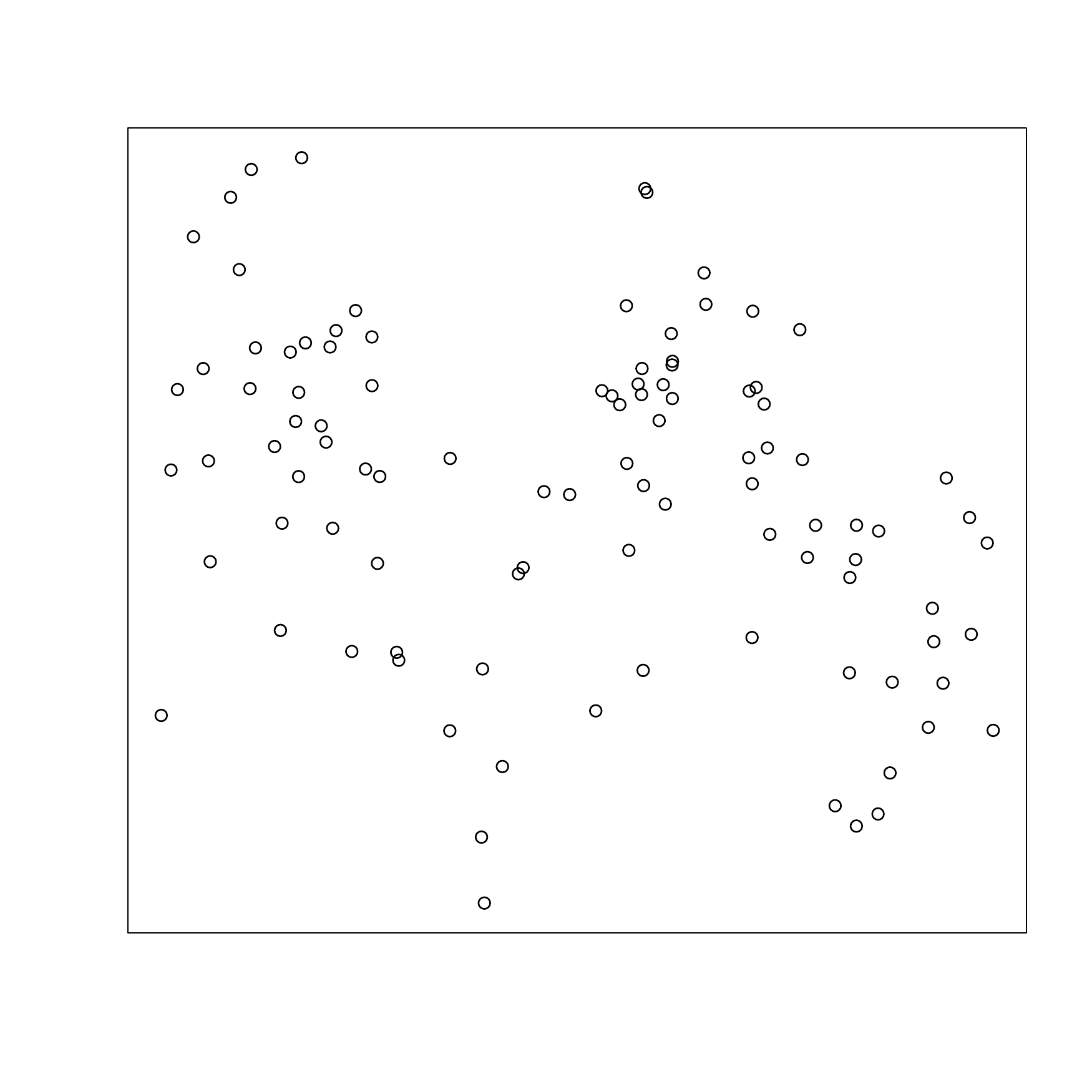}
\caption{$\xi_n= 0.281$. \label{fig0i}}
\end{subfigure}
\caption{Values of $\xi_n(X,Y)$ for various kinds of scatterplots, with $n=100$. Noise increases from left to right. The $95^{\mathrm{th}}$ percentile of $\xi_n(X,Y)$ under the hypothesis of independence is approximately $0.066$.\label{fig0}}
\end{figure}

Figure \ref{fig0} gives a glimpse of the general performance of $\xi_n$ as a measure of association. The figure has three rows. Each row starts with a scatterplot where $Y$ is a noiseless function of $X$, and $X$ is generated from the uniform distribution on $[-1,1]$. As we move to the right, more and more noise is added. The sample size $n$ is taken to be $100$ in each case, to show that $\xi_n$ performs well in relatively small samples. In each row, we see that $\xi_n(X,Y)$ is very close $1$ for the leftmost graph, and progressively deteriorates as we add more noise. By Theorem \ref{cltthm0}, the $95^{\mathrm{th}}$ percentile of $\xi_n(X,Y)$ under the hypothesis of independence, for $n=100$, is approximately $0.066$. The values in Figure \ref{fig0} are all much higher than that.

An interesting observation from Figure~\ref{fig0} is that $\xi_n$ appears to be an {\it equitable} coefficient, as defined in \cite{Reshef11}. The definition of equitability is not mathematically precise but intuitively clear. Roughly, an equitable measure of correlation `gives similar scores to equally noisy relationships of different types'. Figure \ref{fig0} indicates that $\xi_n$ has this property as long as the relationship is `functional'. It is not equitable for relationships that are not functional, although that is expected because $\xi_n$ measures how well $Y$ can be predicted by $X$. 

The other criterion for a good measure of correlation, according to \cite{Reshef11}, is that the coefficient should be `general', in that it should be able to detect any kind of pattern in the scatterplot. In statistical terms, this means that the test of independence based on the coefficient should be consistent against all alternatives. This is clearly true by Theorem~\ref{mainthm}, in fact more true than for any other  coefficient in the literature. Among available test statistics, only maximal correlation has this property in full generality, but there is no estimator of maximal correlation that is known to be consistent for all possible distributions of $(X,Y)$.

\subsection{Validity of the asymptotic theory}\label{asympsec}

\begin{figure}[t]
\centering
\begin{subfigure}[b]{.4\textwidth}
\centering
\includegraphics[width = \textwidth]{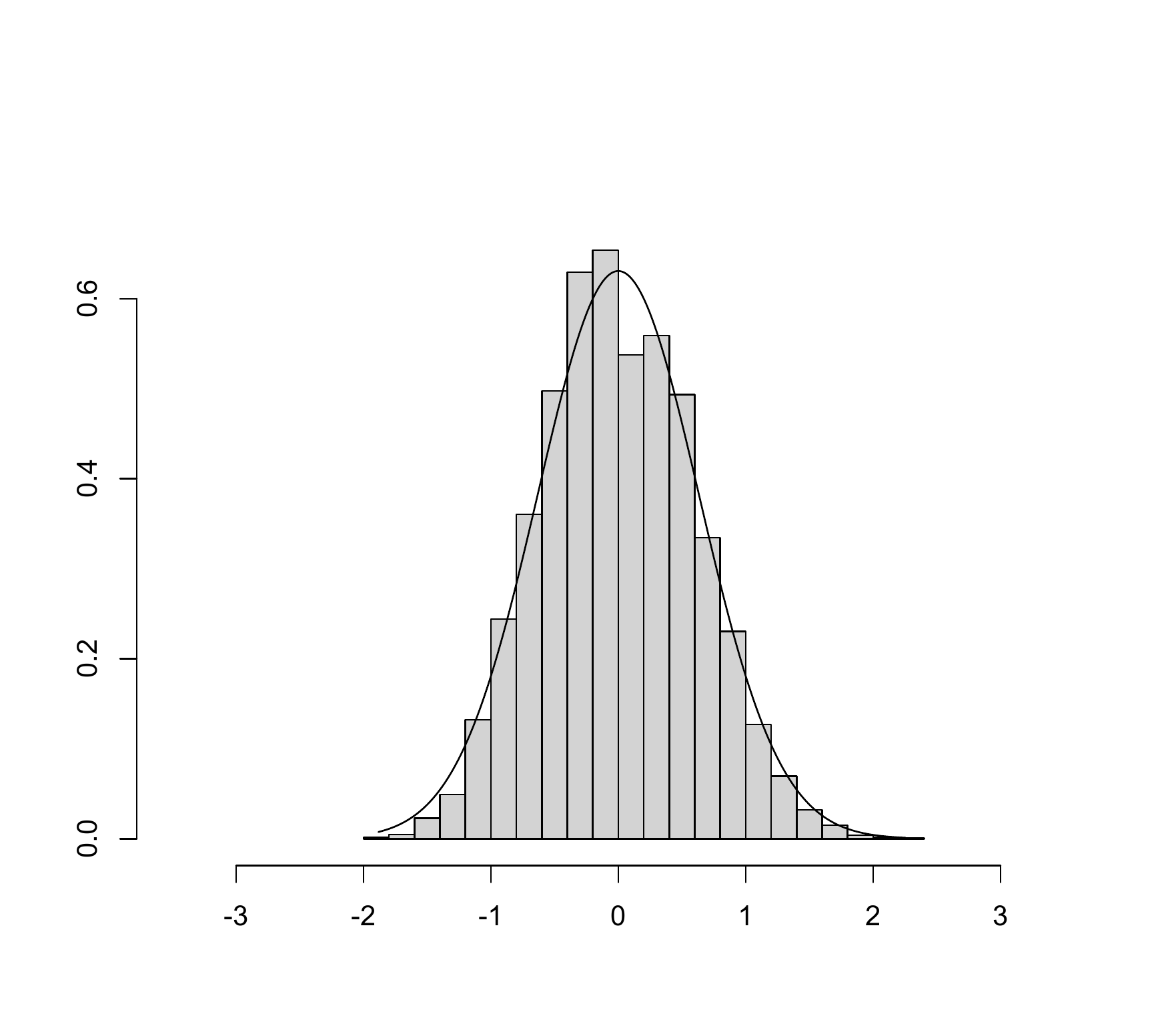}
\caption{Uniform$[0,1]$, $n=20$. \label{fig-null1}}
\end{subfigure}
\begin{subfigure}[b]{.4\textwidth}
\centering
\includegraphics[width = \textwidth]{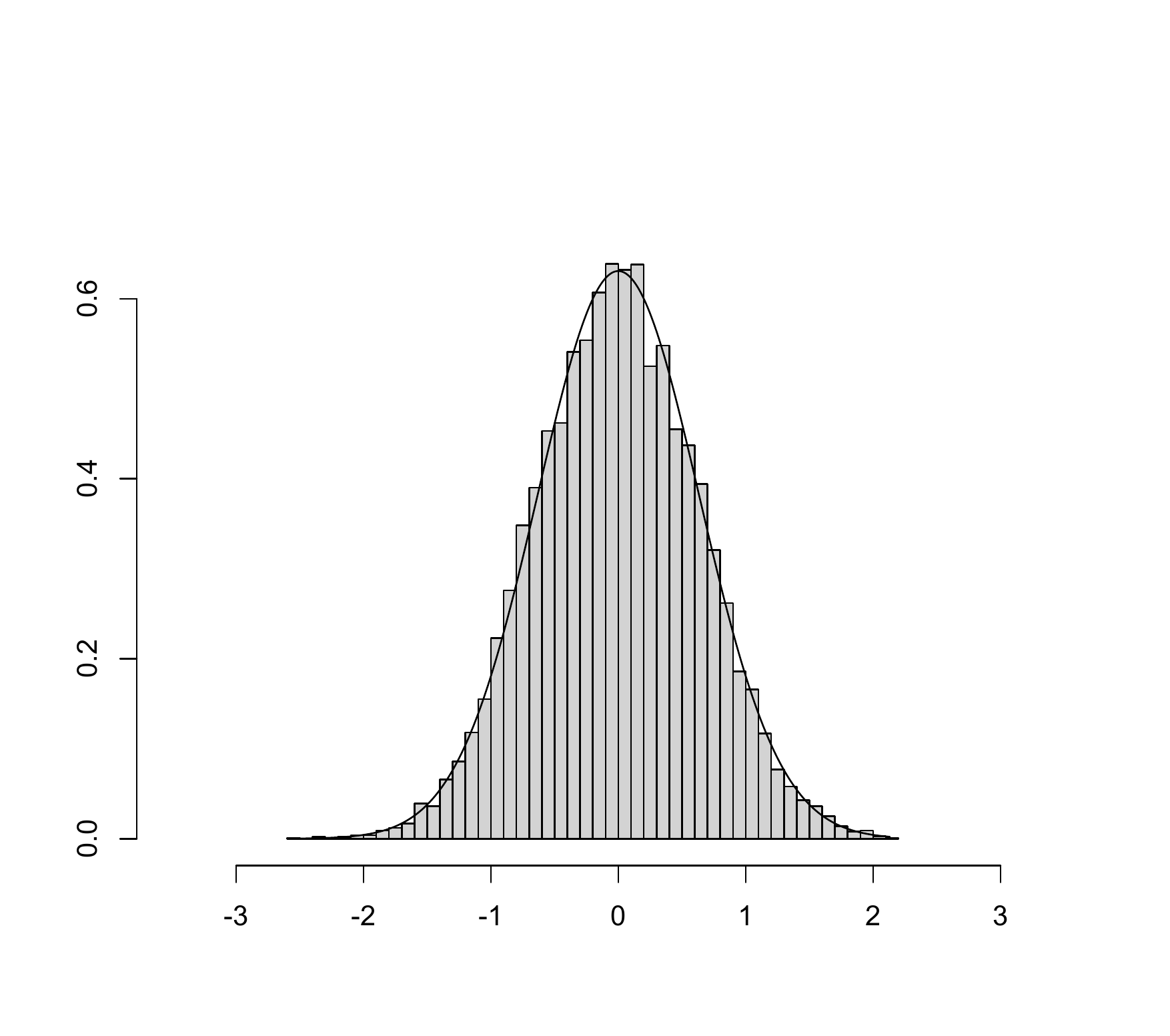}
\caption{Uniform$[0,1]$, $n=1000$. \label{fig-null2}}
\end{subfigure}
\centering
\begin{subfigure}[b]{.4\textwidth}
\centering
\includegraphics[width = \textwidth]{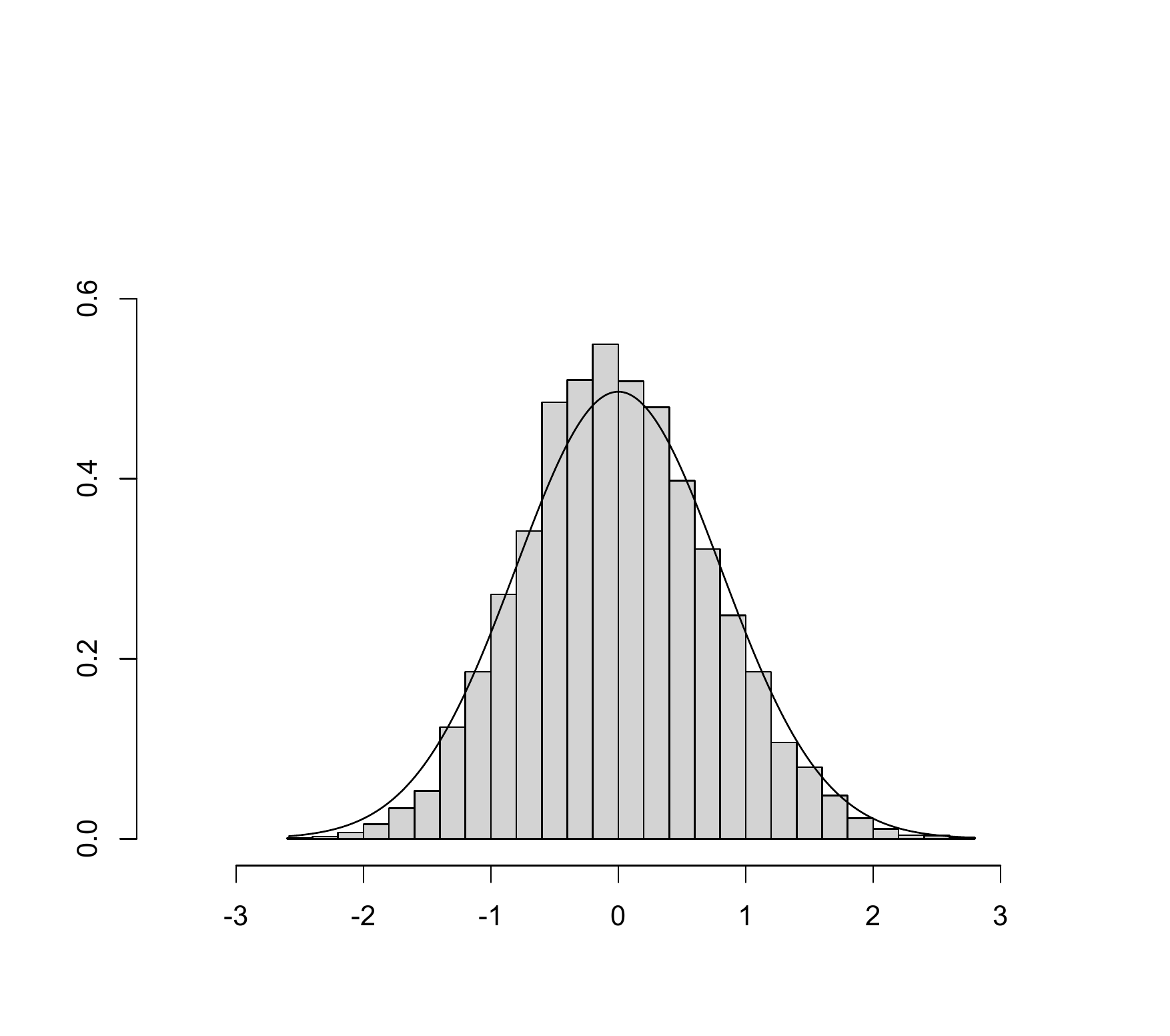}
\caption{Binomial$(3,0.5)$, $n=20$. \label{fig-null3}}
\end{subfigure}
\centering
\begin{subfigure}[b]{.4\textwidth}
\centering
\includegraphics[width = \textwidth]{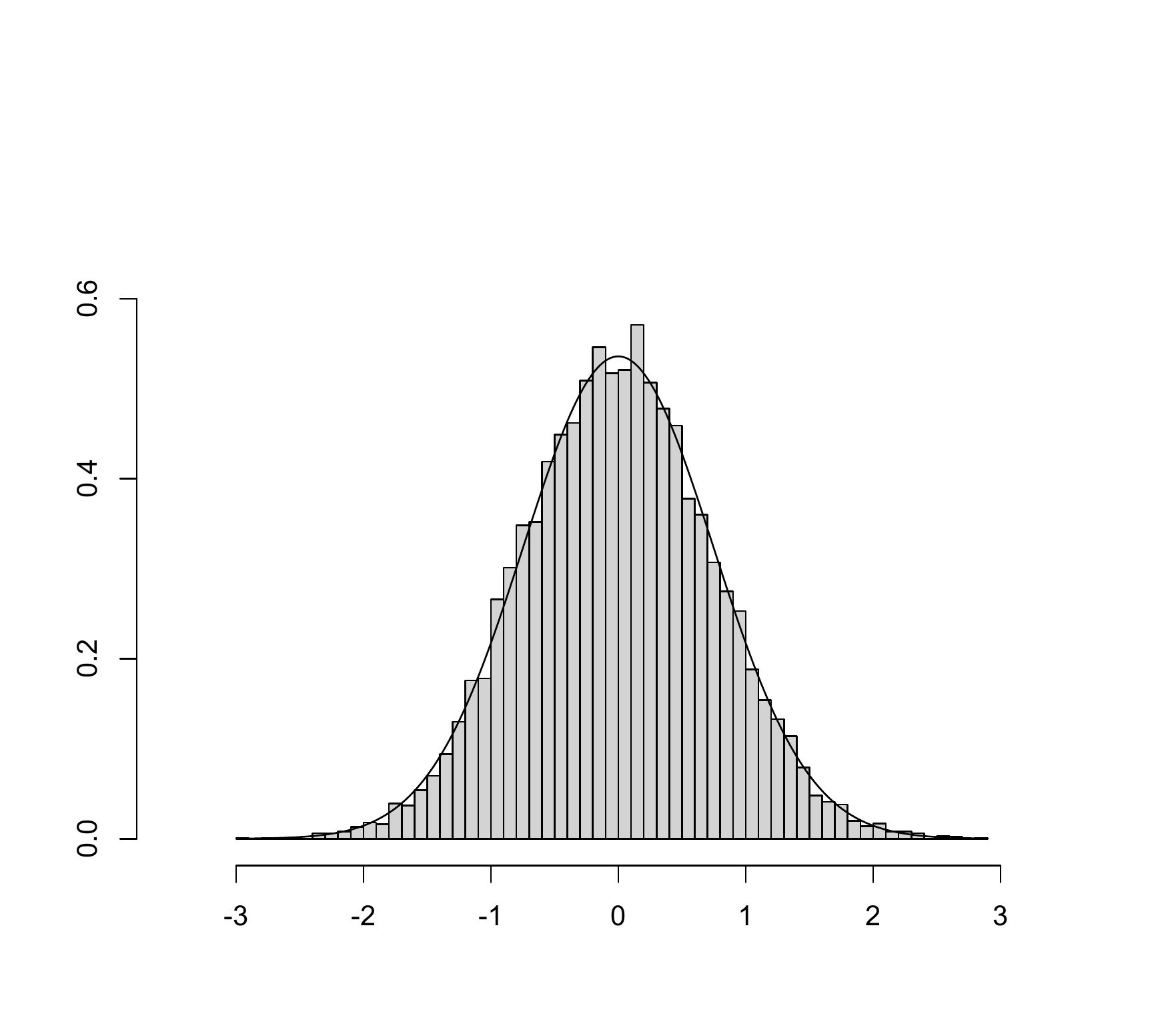}
\caption{Binomial$(3,0.5)$, $n=1000$. \label{fig-null4}}
\end{subfigure}
\caption{Histogram of ten thousand simulations of $\sqrt{n}\xi_n$, superimposed with the asymptotic density function.}
\end{figure}

Next, let us numerically investigate the distribution of $\xi_n(X,Y)$ when $X$ and $Y$ are independent. 
Taking $X_i$'s and $Y_i$'s to be independent Uniform$[0,1]$ random variables, and $n=20$, ten thousand values of $\xi_n(X,Y)$ were generated. The histogram of $\sqrt{n}\xi_n(X,Y)$ is displayed in Figure~\ref{fig-null1}, superimposed with the asymptotic density function predicted by Theorem~\ref{cltthm0}. We see that already for $n=20$, the agreement is striking. A much better agreement is obtained with $n=1000$ in Figure \ref{fig-null2}. Next, $X_i$'s and $Y_i$'s were drawn as independent Binomial$(3, 0.5)$ random variables. The value of $\tau^2$ was estimated using Theorem \ref{estthm}, and was plugged into Theorem \ref{cltthm} to obtain the asymptotic distribution of $\sqrt{n}\xi_n$. Again, the true distributions are shown to be in good agreement with the asymptotic distributions, for $n=20$ and $n=1000$,  in Figures \ref{fig-null3} and \ref{fig-null4}. 

Some simulation analysis was also carried out to investigate the convergence of $\xi_n$ under dependence. For that, the following simple model was chosen. Let $X\sim \textup{Bernoulli}(p)$ and $Z\sim\textup{Bernoulli}(p')$ be independent random variables, and let $Y:=XZ$. Then $X$ and $Y$ are  dependent Bernoulli random variables. An easy calculation shows that
\[
\xi(X,Y) = \frac{p'(1-p)}{1-pp'}.
\]
With $p = 0.4$ and $p' = 0.5$, we get $\xi(X,Y) = 0.375$. To test the convergence of $\xi_n$ to $\xi$, ten thousand simulations were carried out with $n=1000$. In this sample, the mean value of $\xi_n$ was approximately $0.374$ and the standard deviation was approximately $0.040$ (which means that the standard deviation of $\sqrt{n}\xi_n$ was approximately $1.254$). The histogram, given in Figure \ref{fig-dephist} shows an excellent fit with a normal distribution with the above mean and standard deviation.  

\begin{figure}[t]
\centering
\includegraphics[width = .5\textwidth]{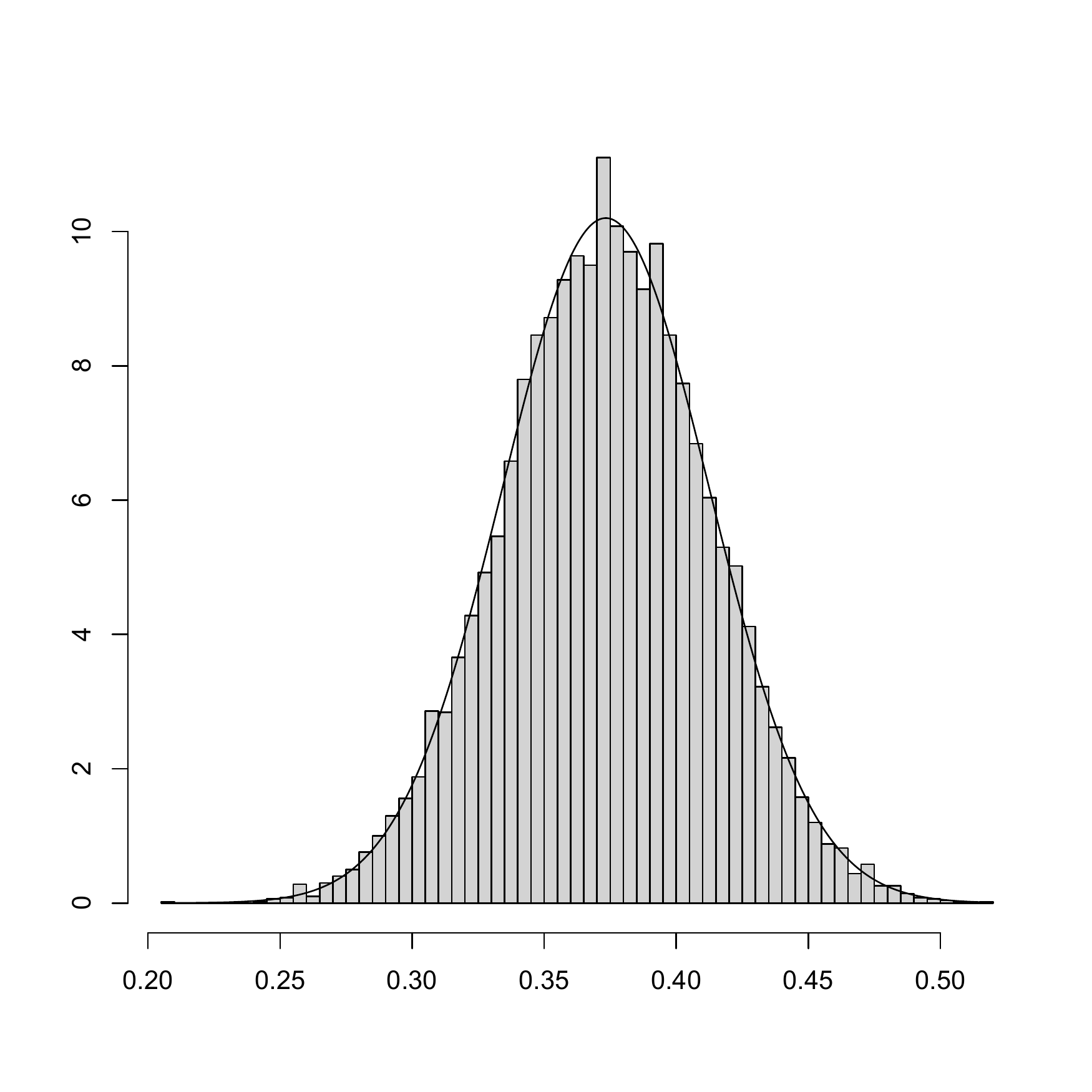}
\caption{Histogram of ten thousand simulations of $\xi_n(X,Y)$ when $X$ and $Y$ are dependent Bernoulli random variables (see Section \ref{asympsec}), superimposed with the normal density function of suitable mean and variance. Here $\xi(X,Y)=0.375$ and $n=1000$. \label{fig-dephist}}
\end{figure}

\subsection{Power and run time comparisons}\label{powersec}
In this section we compare the power of the test of independence  based on $\xi_n$ against a number of powerful tests proposed in recent years, and we also compare the run times of these tests. The main finding is that $\xi_n$ is less powerful than some of the other tests if the signal is relatively smooth, and more powerful if the signal is wiggly. In terms of run time, $\xi_n$ has a big advantage since it is computable in time $O(n\log n)$, whereas its competitors require time $n^2$. This is further validated through numerical examples, which show that $\xi_n$ is essentially the only statistic that can be computed in reasonable time if the sample size is in the order several thousands.


Comparisons are carried out with the following popular test statistics for testing independence.  I excluded  statistics that are either too new (because they are not time-tested, and software is not available in many cases) or too old (because they are superseded by newer ones). In the following, $(X_1,Y_1),\ldots, (X_n, Y_n)$ is an i.i.d.~sample of points from some distribution on~$\rr^2$. 
\begin{enumerate}
\item \textit{Maximal information coefficient (MIC) \cite{Reshef11}:} Recall that the mutual information of a  bivariate probability distribution is the Kullback--Leibler divergence between that distribution and the product of its marginals. Given any scatterplot of $n$ points, suppose we divide it into an $x\times y$ array of rectangles. The proportions of points falling into these rectangles define a bivariate probability distribution. Let $I$ be the mutual information of this probability distribution. The maximum of $I/\log \min\{x,y\}$ over all subdivisions into rectangles, under the constraint $xy< n^{0.6}$, is called the maximal information coefficient of the scatterplot. 
\item \textit{Distance correlation~\cite{Szekely07}:} Let $a_{ij} := |X_i-X_j|$ and $b_{ij} := |Y_i-Y_j|$. Center these numbers by defining $A_{ij} := a_{ij}-a_{i\cdot} - a_{\cdot j} + a_{\cdot\cdot}$ and $B_{ij} := b_{ij}-b_{i\cdot} - b_{\cdot j} + b_{\cdot\cdot}$, where $a_{i\cdot}$ is the average of $a_{ij}$ over all $j$, etc. The distance correlation between the two samples is simply the Pearson correlation between the $A_{ij}$'s and the $B_{ij}$'s. 
\item \textit{The HHG test~\cite{hhg13}:} Take any $i$ and $j$. Divide $X_k$'s into two groups depending on whether $|X_i-X_k|< |X_i-X_j|$ or not. Similarly classify the $Y_k$'s into two groups depending on whether $|Y_i-Y_k|< |Y_i-Y_j|$ or not. These classifications partition the scatterplot into $4$ compartments, and the numbers of points in these compartments define a $2\times 2$ contingency table. The HHG test statistic is a linear combination of the Pearson $\chi^2$ statistics for testing independence in these contingency tables over all choices of $i$ and $j$. 
\item \textit{The Hilbert--Schmidt independence criterion (HSIC)~\cite{gretton05, gretton08}:} Let $k$ and $l$ be symmetric positive definite kernels on $\rr^2$. For example, we may take the Gaussian kernel $k(x,y) = l(x,y)= e^{-|x-y|^2/2\sigma^2}$ for some $\sigma>0$. Let $k_{ij} := k(X_i, X_j)$ and $l_{ij} := l(Y_i,Y_j)$. Then the  HSIC statistic is 
\[
\frac{1}{n^2}\sum_{i,j} k_{ij} l_{ij} + \frac{1}{n^4} \sum_{i,j,q,r} k_{ij} l_{qr} - \frac{2}{m^3}\sum_{i,j,q} k_{ij} l_{iq}. 
\]
\end{enumerate}
All of the above test statistics are consistent for testing independence under mild conditions. Moreover, the HSIC test has been proved to be minimax rate-optimal against uniformly smooth alternatives~\cite{liyuan19}.

\begin{figure}[t]
\includegraphics[width = .95\textwidth]{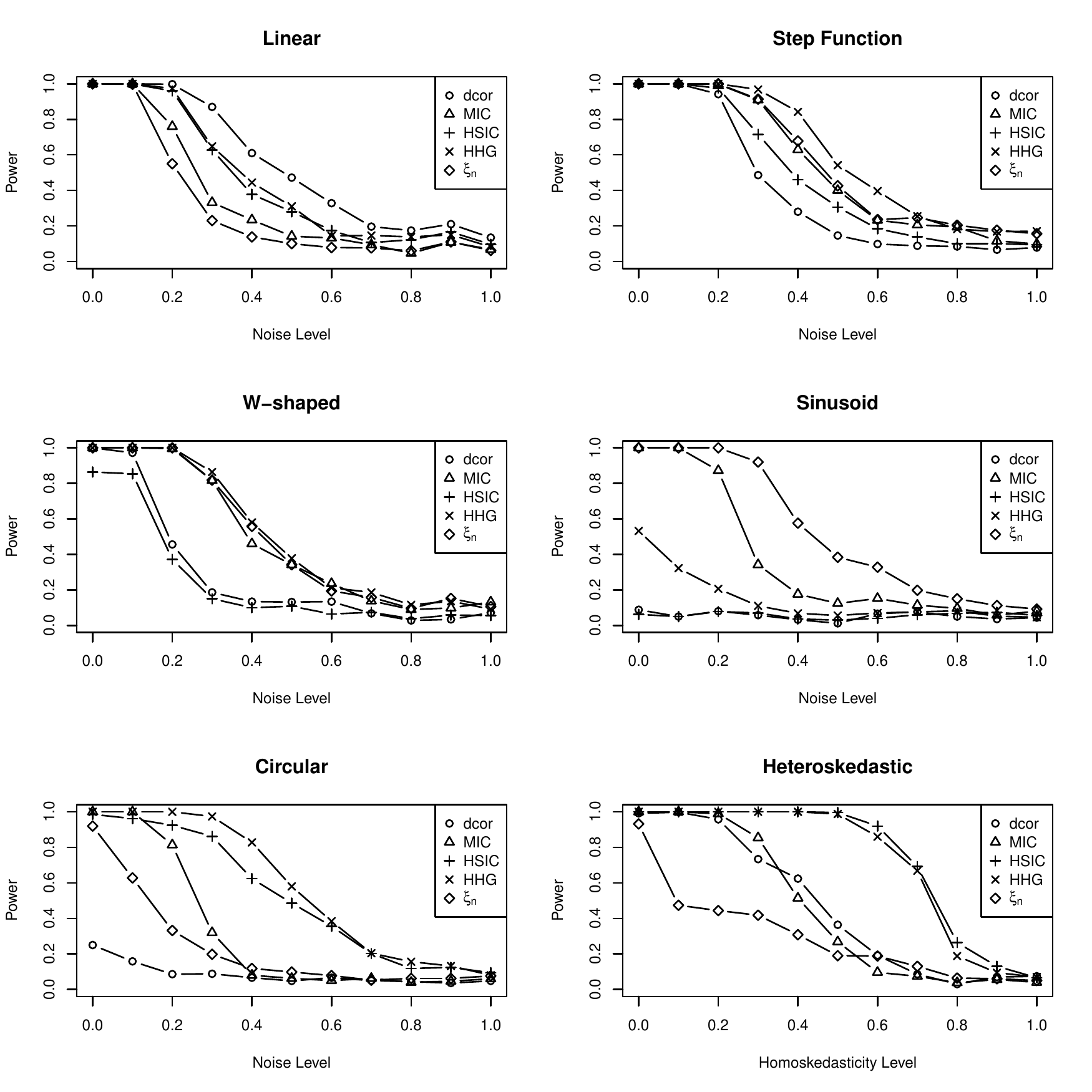}
\caption{Comparison of powers of several tests of independence. The titles describe the shapes of the scatterplots. The level of the noise increases from left to right. In each case, the sample size is $100$, and $500$ simulations were used  to estimate the power.\label{fig1}}
\end{figure}

Power comparisons were carried out with sample size $n = 100$. In each case, $500$ simulations were used to estimate the power. The R packages energy, minerva, HHG and dHSIC were used for calculating the distance correlation, MIC, HHG and HSIC statistics, respectively. Since the HHG test is very slow for large samples, a fast univariate version of the HHG test~\cite{hhg2} was used. Generating $X$ from the uniform distribution on $[-1,1]$, the following six alternatives were considered:
\begin{enumerate}
\item Linear: $Y = 0.5X + 3\lambda \ve$, where $\lambda$ is a noise parameter ranging from $0$ to $1$, and $\ve \sim N(0,1)$ is independent of $X$. 
\item Step function: $Y = f(X) +10 \lambda \ve$, where $f$ takes values $-3$, $2$, $-4$ and $-3$ in the intervals $[-1,-0.5)$, $[-0.5, 0)$, $[0,0.5)$ and $[0.5,1]$.
\item W-shaped: $Y = |X+0.5|1_{\{X<0\}} + |X-0.5|1_{\{X\ge 0\}} + 0.75\lambda \ve$. 
\item Sinusoid: $Y = \cos 8\pi X + 3\lambda \ve$. 
\item Circular: $Y = Z\sqrt{1-X^2} + 0.9 \lambda \ve$, where $Z$ is $1$ or $-1$ with equal probability, independent of $X$.
\item Heteroskedastic: $Y = 3(\sigma(X) (1-\lambda)+\lambda)\ve$, where $\sigma(X) = 1$ if $|X|\le 0.5$ and $0$ otherwise. As $\lambda$ increases from $0$ to $1$, the relationship becomes more and more homoskedastic. 
\end{enumerate}
The coefficients in all of the above were chosen to ensure that a full range of powers were observed as $\lambda$ was varied from $0$ to $1$. The results are presented in Figure~\ref{fig1}. The main observation from this figure is that $\xi_n$ is more powerful than the other tests when the signal has an oscillatory nature, such as for the W-shaped scatterplot and the sinusoid. For the step function, too, it performs reasonably well. However, $\xi_n$ has inferior performance for smoother alternatives, namely, the linear, circular, and heteroskedastic scatterplots.

\begin{table}[t]
\renewcommand\arraystretch{1.2}
\begin{center}
\begin{footnotesize}
\caption{Run times (in seconds) for permutation tests of independence, with $200$ permutations. For $\xi_n$, the asymptotic test was used because it is as reliable as the permutation test.\label{permrun}}
\begin{tabular}{l>{\raggedleft}p{0.09\linewidth}>{\raggedleft}p{0.11\linewidth}>{\raggedleft}p{0.09\linewidth}>{\raggedleft}p{0.12\linewidth}>{\raggedleft\arraybackslash}p{0.07\linewidth}}
\toprule
$n$ & dCor & MIC & HSIC & HHG  & $\xi_n$ \\
\midrule
100 & 0.008 & 0.328 & 0.048  & 0.167 &  0.006  \\
500 & 0.104  & 5.433 & 1.214 & 4.671 & 0.007 \\
1000 & 0.532 & 17.459  & 5.028 &  20.515 & 0.009  \\ 
2000 &  2.423 & 55.556 & 18.873 & 108.949 &  0.009\\
10000 &88.976 & 1097.483 & 860.605 & $>30$ mins & 0.011 \\
\bottomrule
\end{tabular}
\end{footnotesize}
\end{center}
\end{table}

Next, let us turn to the comparison of run times for tests of independence based on the five competing test statistics. For all except $\xi_n$, the only way to test for independence is to run a permutation test. (There is a theoretical test for HSIC, but it is only a crude approximation.) The number of permutations was taken to be the smallest respectable number, $200$. Usually $200$ is too small for a permutation test, but I took it to be so small so that the program terminates in a manageable amount of time for the larger values of~$n$. For $\xi_n$, the asymptotic test was used because it performs as well as the permutation test even in very small samples, as we saw in Section~\ref{asympsec}.

For distance correlation, HSIC and HHG, the permutation tests are directly available from the corresponding R packages. For MIC, I had to write the code because the permutation tests are not automatically available from the package, so the run time can probably be somewhat improved with a better code. For the HHG test, the function requires the distance matrices for $X$ and $Y$ to be input as arguments. For the sake of fairness, the time required for computing the distance matrices was included in the total time for carrying out the permutation tests. 

The results are presented in Table~\ref{permrun}. Every test was hundreds or even thousands of times slower than the test based on $\xi_n$ for all sample sizes $500$ and above. For sample size $10000$, the HHG test was terminated after not converging in 30 minutes.

\section{Example: Yeast gene expression data}\label{spellmansec}

\begin{figure}[t]
\includegraphics[width = .9\textwidth]{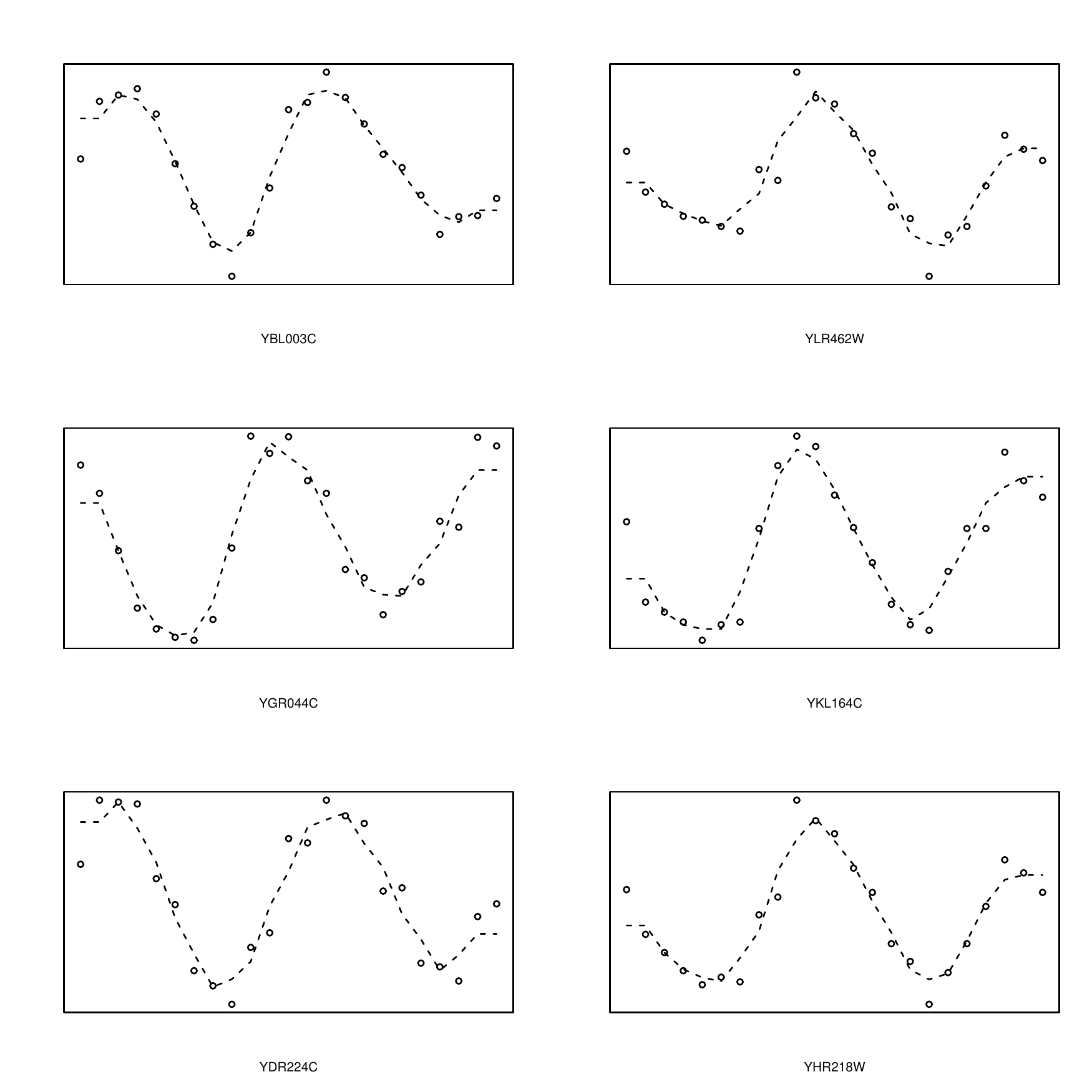}
\caption{Transcript levels of the top 6 among the 215 genes selected by $\xi_n$ but by no other test. The dashed lines are fitted by $k$-nearest neighbor regression with $k=3$. The name of the gene is displayed below each plot. \label{spellfig1}}
\end{figure}

In a landmark paper in gene expression studies~\cite{spellman98}, the authors studied the expressions of $6223$ yeast genes with the goal of identifying genes whose transcript levels oscillate during the cell cycle. In lay terms, this means that the expressions were studied over a number of successive time points ($23$, to be precise), and the goal was to identify the genes for which the transcript levels follow an oscillatory pattern. This example illustrates the utility of correlation coefficients in detecting patterns, because the number of genes is so large that identifying patterns by visual inspection is out of the question.

\begin{figure}[t]
\includegraphics[width = .9\textwidth]{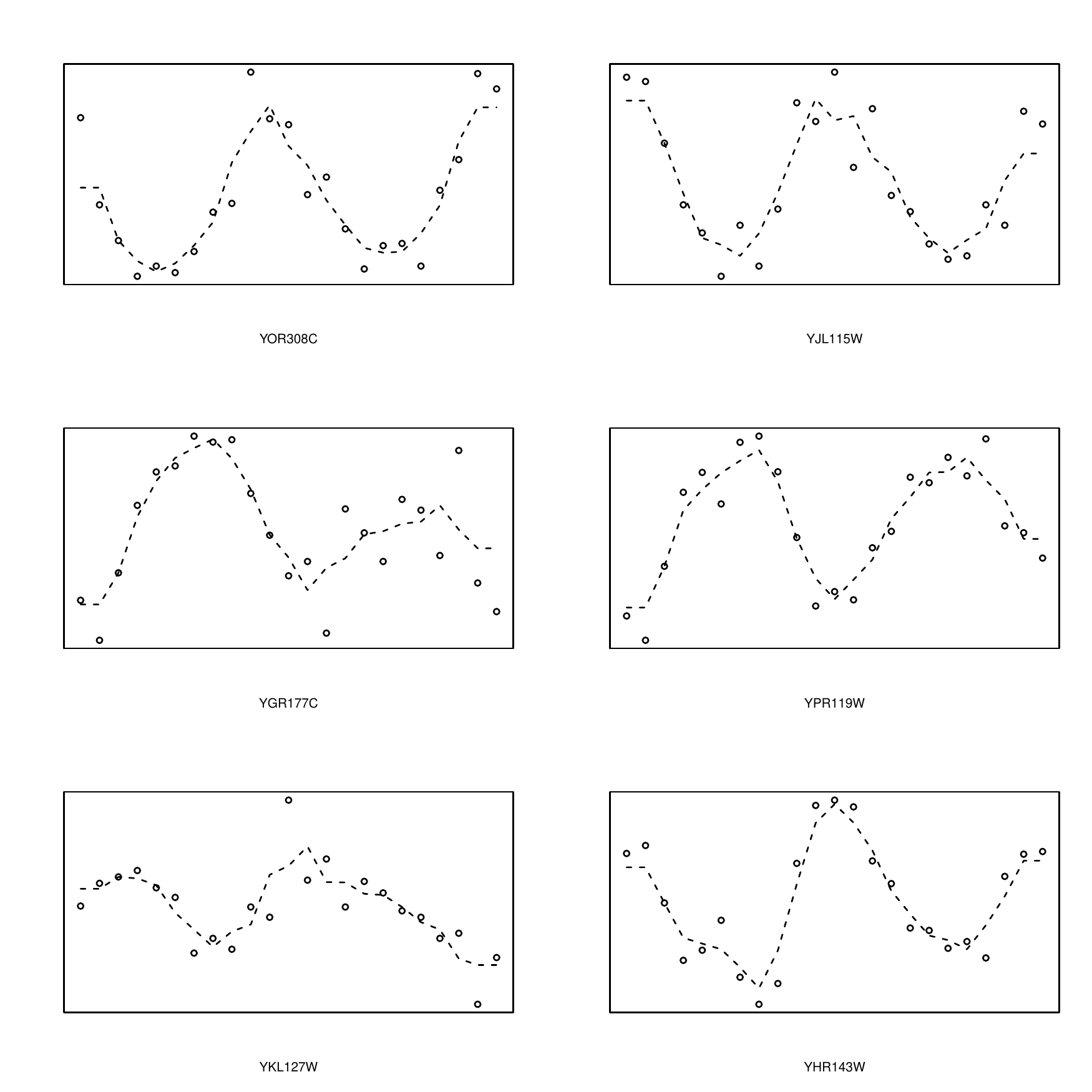}
\caption{Transcript levels of a random sample of 6 genes from the 215 genes that were selected by $\xi_n$ but by no other test.\label{spellfig3}}
\end{figure}

This dataset was used in the paper~\cite{Reshef11} to demonstrate the efficacy of MIC for identifying patterns in scatterplots. The authors of \cite{Reshef11} used a curated version of the dataset, where they excluded all genes for which there were missing observations, and made several other modifications. The revised dataset has $4381$ genes. I used this curated dataset (available through the R package minerva) to study the power of $\xi_n$ in discovering genes with oscillating transcript levels, and compare its performance with the competing tests from Section \ref{powersec}. 

There are literally hundreds of papers analyzing this particular dataset. I will not attempt to go deep into this territory in any way, because that will take us too far afield. The sole purpose of the analysis that follows is to compare the performance of $\xi_n$ with the  competing tests.

\begin{figure}[t]
\includegraphics[width = .9\textwidth]{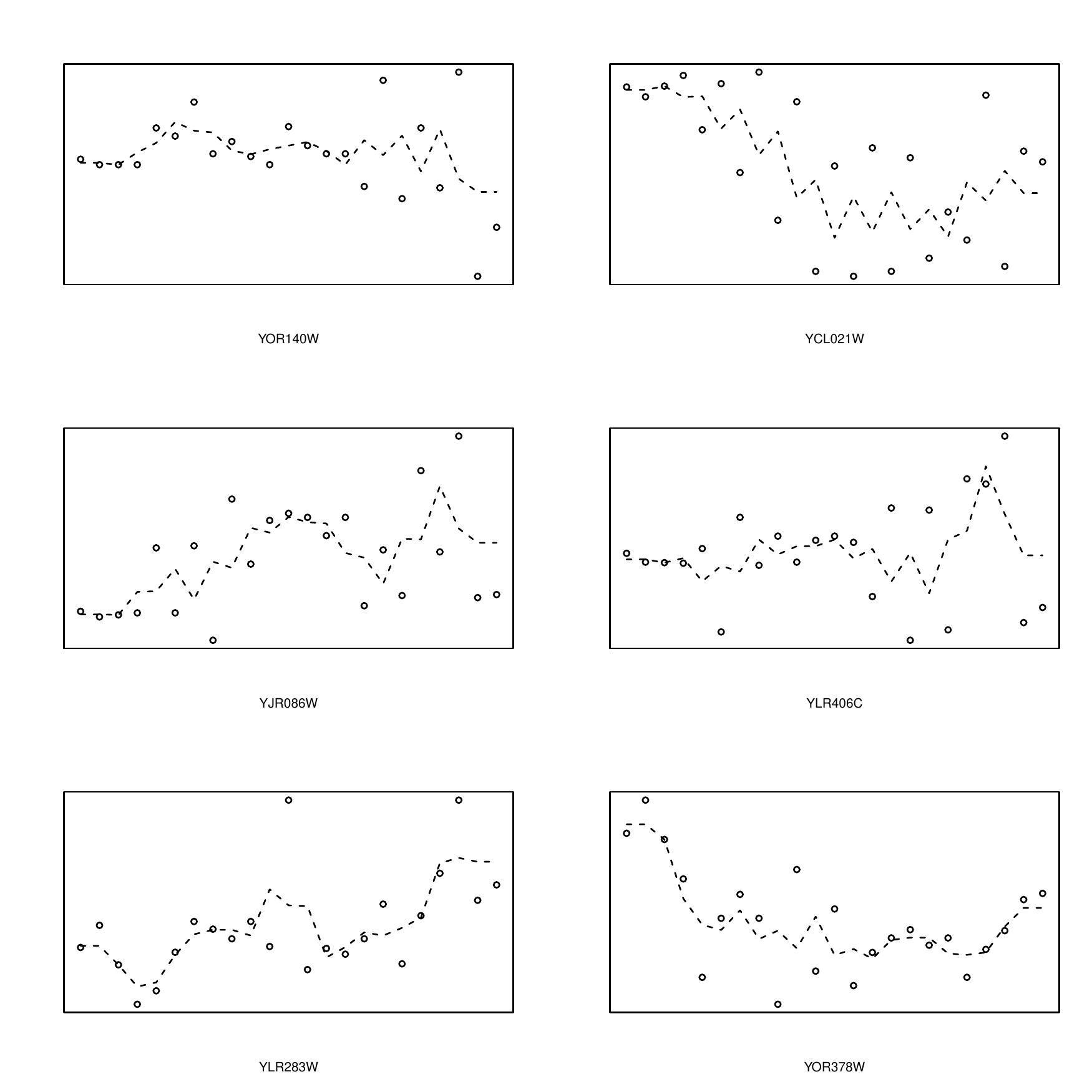}
\caption{Transcript levels of 6 randomly sampled genes from the set of genes that were not selected by $\xi_n$ but were selected by at least one other test.\label{spellfig4}}
\end{figure}

For each test, P-values were obtained and a set of significant genes were selected using the Benjamini--Hochberg FDR procedure~\cite{bh95}, with the expected proportion of false discoveries set at $0.05$.

It turned out that {\it there are 215 genes (out of 4381) that are selected by $\xi_n$ but by none of the other  tests.} This is surprising in itself, but what is more surprising is the nature of these genes. Figure \ref{spellfig1} shows the transcript levels of the top $6$ of these genes (that is, those with the smallest P-values). There is no question that these genes exhibit almost perfect oscillatory behavior and yet they were not selected by any of the  other tests.

One may wonder if this is true for only the top 6 genes, or typical of all 215. To investigate that, I took a random sample of 6 genes from the 215, and looked at their transcript levels. The results are shown in Figure \ref{spellfig3}. Even for a random sample, we see strong oscillatory behavior. This behavior was consistently observed in other random samples.

How about the genes that were selected by at least one of the other tests, but not by $\xi_n$? Figure \ref{spellfig4} shows the transcript levels of a random sample of 6 genes selected from this set. I think it is reasonable to say that these plots show slight increasing or decreasing trends, or heteroscedasticity, but no definite oscillatory patterns. Repeated samplings showed similar results. 

Thus, we arrive at the following conclusion. The genes selected by $\xi_n$ are much more likely than the genes selected by the other tests to be the ones that really exhibit oscillatory patterns in their transcript levels during the cell cycle. This is because the other tests prioritize monotone trends over cyclical patterns. Most of the 215 genes that were selected by $\xi_n$ but not by any of the other tests show pronounced oscillatory patterns.   The fact that $\xi_n$ is particularly powerful for detecting oscillatory behavior turns out to be very useful in this example. Of course, $\xi_n$ also selects genes that show other kinds of patterns (it selects a total of 586 genes), but those are  selected by at least one of the other tests and therefore do not appear in this set of 215 genes that are selected exclusively by $\xi_n$.

\begin{figure}[t]
\includegraphics[width = .55\textwidth]{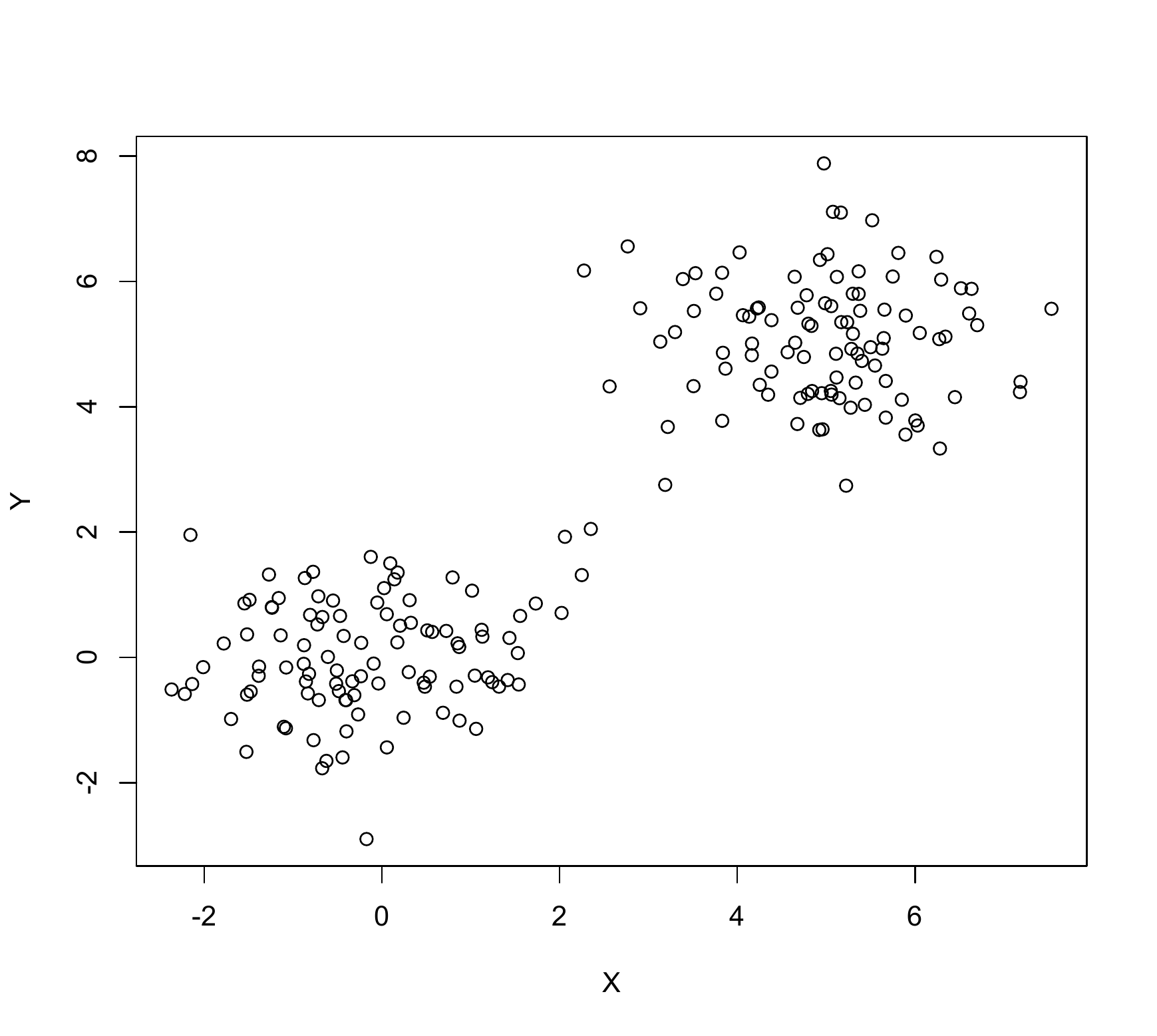}
\caption{Scatterplot of a mixture of bivariate normals, with  $n=200$. For this plot, maximal correlation $=0.99$, MIC $=1.00$, and $\xi_n= 0.48$. \label{figx2}}
\end{figure}

\section{MIC and maximal correlation may not correctly measure the strength of the relationship}\label{micsec}

It is sometimes mistakenly believed that MIC and maximal correlation measure the strength of relationship between $X$ and $Y$; in particular, that they attain their maximum value, $1$, if and only if the relationship between $X$ and $Y$ is perfectly noiseless. In this section we show that this is not true:  MIC and maximal correlation can detect noiseless relationships even if the actual relationship between $X$ and $Y$ is very noisy.

In the example shown in Figure \ref{figx2}, $200$ samples of $(X,Y)$ are generated from a mixture of bivariate normal distributions. With probability $1/2$, $(X,Y)$ is drawn from the standard bivariate normal distribution, and with probability $1/2$, $(X,Y)$ is drawn from the bivariate normal distribution with mean $(5,5)$ and identity covariance matrix. The data forms two clusters of roughly equal size that are close but nearly disjoint. Clearly, there is a lot of noise in the relationship between $X$ and $Y$. Given $X$, we can only tell whether $Y$ comes from $N(0,1)$ or $N(5,1)$, but nothing else. Yet, rounded off to two decimal places, MIC is $1.00$ and maximal correlation (as computed by the ACE algorithm~\cite{Breiman85}) is $0.99$ for this scatterplot. The coefficient $\xi_n$, on the other hand, is well-behaved; it turns out to be $0.48$, indicating the presence of a significant relationship between $X$ and $Y$ but not a noiseless one. Common sense suggests that the value $0.48$ is much better reflective of the strength of the relationship between $X$ and $Y$ in Figure \ref{figx2} than $0.99$ or~$1.00$.

In the supplementary material of~\cite{Reshef11}, it is shown that MIC $=1$ when $Y = f(X)$ for a large class of functions $f$. However, it is not shown that the {\it converse} is true, that is MIC $=1$ implies that $X$ and $Y$ have a noiseless relationship. Figure \ref{figx2} indicates that in fact the converse is probably {\it not true}. The phenomenon is not an artifact of the sample size --- it remains consistently true in larger sample sizes. Moreover, scatterplots such as  Figure \ref{figx2} are not uncommon in real datasets. 

The following mathematical result uses the intuition gained from the above example to confirm that there indeed exist very noisy relationships which are declared to be perfectly noiseless by maximal correlation and MIC.
\begin{prop}\label{micthm}
Let $I_1$, $I_2$, $J_1$ and $J_2$ be bounded intervals such that $I_1$ and $I_2$ are disjoint, and $J_1$ and $J_2$ are disjoint. Suppose that the law of a random vector $(X,Y)$ is supported on the union of the two rectangles $I_1\times J_1$ and $I_2\times J_2$, giving equal masses to both. Then the maximal correlation between $X$ and $Y$ is $1$, and the MIC between $X$ and $Y$ in an i.i.d.~sample of size $n$ tends to $1$ in probability as $n\to \infty$. 
\end{prop}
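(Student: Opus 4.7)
The plan is to handle the two assertions separately; both arguments are short. For the maximal correlation claim, I will exhibit an explicit pair of square-integrable functions $f, g$ with $f(X) = g(Y)$ almost surely and nonzero variance, forcing $\mathrm{Corr}(f(X), g(Y)) = 1$. For the MIC claim, I will display a single admissible $2 \times 2$ grid whose empirical score tends to $1$ and combine this with the universal upper bound $\mathrm{MIC} \leq 1$.

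For the first assertion, define the $\pm 1$-valued functions $f(x) := \mathbf{1}_{x \in I_1} - \mathbf{1}_{x \in I_2}$ and $g(y) := \mathbf{1}_{y \in J_1} - \mathbf{1}_{y \in J_2}$. Because the law of $(X, Y)$ is supported on $(I_1 \times J_1) \cup (I_2 \times J_2)$ with equal mass $1/2$ on each rectangle, one has $f(X) = g(Y) = +1$ on the first rectangle and $f(X) = g(Y) = -1$ on the second. Thus $f(X) = g(Y)$ almost surely, each is Rademacher with variance $1$, and so $\mathrm{Corr}(f(X), g(Y)) = 1$. Since the maximal correlation is the supremum of such correlations over square-integrable pairs and is always at most $1$ by Cauchy--Schwarz, it equals $1$.

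For the MIC assertion, relabel if necessary so that $I_1$ lies strictly to the left of $I_2$ and $J_1$ strictly below $J_2$. Consider the $2 \times 2$ grid obtained by drawing one vertical line separating $I_1$ from $I_2$ and one horizontal line separating $J_1$ from $J_2$. For $n \geq 11$ the admissibility constraint $xy = 4 < n^{0.6}$ is satisfied. Every sample point lands in the lower-left or upper-right cell, and by the strong law of large numbers the empirical proportions in these two cells converge almost surely to $1/2$, while those in the other two cells tend to $0$. The empirical mutual information $I_n$ of the $2 \times 2$ table is a continuous function of the four cell probabilities (using the convention $0 \log 0 := 0$), so $I_n$ converges almost surely to the mutual information of the limit distribution, which is exactly $\log 2$. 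Dividing by $\log \min\{2, 2\} = \log 2$ yields a ratio tending to $1$ almost surely. Since the general entropy inequality $I(X_g; Y_g) \leq \min\{H(X_g), H(Y_g)\} \leq \log \min\{x, y\}$ for any $x \times y$ grid implies $\mathrm{MIC} \leq 1$ deterministically, this sandwich gives $\mathrm{MIC} \to 1$ almost surely, hence in probability.

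There is really no substantive obstacle here. The only fine points are verifying the admissibility condition $xy < n^{0.6}$ (which holds for all but finitely many $n$) and invoking continuity of mutual information at boundary distributions via the convention $0 \log 0 := 0$; both are standard. The whole argument reduces to one explicit choice of functions for part (a) and one explicit grid plus a one-line application of the law of large numbers for part (b).
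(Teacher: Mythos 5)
Your proof is correct and follows essentially the same route as the paper: for maximal correlation you exhibit functions that make $f(X)=g(Y)$ almost surely (your $\pm 1$-valued choice is just an affine rescaling of the paper's indicators of $I_1$ and $J_1$), and for MIC you analyze the same natural $2\times 2$ grid aligned with the rectangles and show its empirical mutual information converges to $\log 2$. Your write-up is a bit more explicit about the admissibility constraint $xy<n^{0.6}$ and the continuity argument underlying the convergence, but these are details the paper leaves implicit rather than genuine points of divergence.
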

\begin{proof}
Recall that the maximal correlation between two random variables $X$ and $Y$ is defined as the maximum possible correlation between $f(X)$ and $g(Y)$ over all $f$ and $g$ such that $f(X)$ and $g(Y)$ are square-integrable. In the setting of this proposition, let $f$ be the indicator of the interval $I_1$ and $g$ be the indicator of the interval $J_1$. Then $f(X) = 1$ if and only if $g(Y)=1$, because the nature of $(X,Y)$ implies that $X\in I_1$ if and only if $Y\in J_1$. Thus, $f(X)=g(Y)$, and so the maximal correlation between $X$ and $Y$ is equal to $1$. 

Next, recall the definition of MIC from Section \ref{powersec}. The support of $(X,Y)$ can be partitioned into the $2\times 2$ array of rectangles $I_1\times J_1$, $I_1\times J_2$, $I_2\times J_1$ and $I_2\times J_2$. The first and fourth rectangles carry mass $1/2$ each, and the other two carry mass $0$. Therefore, when $n$ is large, the first and fourth rectangles receive approximately $n/2$ points each, and the other two receive no points. A simple calculation shows that the mutual information of the corresponding contingency table is approximately $\log 2$. Thus, the contribution of this array of rectangles to the definition of MIC is approximately $1$, which shows that the MIC itself is approximately $1$ (since it cannot exceed $1$ and is defined to be the maximum of the contributions from all rectangular partitions of size $< n^{0.6}$). 
\end{proof}

\section{Summary}\label{adsec}
Let us now briefly summarize what we learned. The new correlation coefficient offers many advantages over its competitors. The following is a partial list:
\begin{enumerate}
\item It has a very simple formula. The formula is as simple as those for the classical coefficients, like Pearson's correlation, Spearman's $\rho$, or Kendall's $\tau$.
\item Due to its simple formula, it is (a) easy to understand conceptually, and (b) computable very quickly, not only in theory but also in practice. Most of its competitors are hundreds of times slower to compute even in samples of moderately large size, such as $500$.
\item It is a function of ranks, which makes it robust to outliers and invariant under monotone transformations of the data. 
\item It converges to a limit which has an easy interpretation as a measure of dependence. The limit ranges from $0$ to $1$. It is $1$ if and only if $Y$ is a measurable function of $X$ and $0$ if and only if $X$ and $Y$ are independent. Thus, $\xi_n$ gives an actual measure of the strength of the relationship. 
\item It has a very simple asymptotic theory under the hypothesis of independence, which is roughly valid even for samples of  size as small as $20$. This allows theoretical tests of independence, bypassing computationally expensive permutation tests that are necessary for other tests.
\item The test of independence based on $\xi_n$ is consistent against all alternatives, with no exceptions. No other test has this property.
\item None of the results mentioned above require any assumptions about the law of $(X,Y)$ except that $Y$ is not a constant. One can even apply $\xi_n$ to categorical data, by converting the categorical variables to integer-valued variables in any arbitrary way.
\item In simulations and real data, $\xi_n$ seems to be more powerful than other tests for detecting oscillatory signals.
\end{enumerate}
Against all of the above advantages, $\xi_n$ has only one disadvantage: It seems to have  less power than several popular tests of independence when the signal is smooth and non-oscillatory. Although such signals comprise the majority of types observed in practice, this is a matter of concern only when the sample size is small. In large samples, all tests are powerful, and computational time becomes a much bigger concern. 

\section{Proof sketch}\label{sketchsec}
This section contains a brief sketch of the proof of convergence of $\xi_n$ to $\xi$. For simplicity, let us only consider the case of continuous $X$ and $Y$.  First, note that by the Glivenko--Cantelli theorem, $r_i/n \approx F(Y_{(i)})$, where $F$ is the cumulative distribution function of $Y$. Thus, 
\begin{align}\label{xiapprox}
\xi_n(X,Y) \approx 1 - \frac{3}{n} \sum_{i=1}^n |F(Y_i)- F(Y_{N(i)})|,
\end{align}
where $N(i)$ is the unique index $j$ such that $X_j$ is immediately to the right of $X_i$ if we arrange the $X$'s in increasing order. If $X_i$ is the rightmost value, define $N(i)$ arbitrarily; it does not matter since the contribution of a single term in the above sum is $O(1/n)$.

The first important observation is that for any $x,y\in \rr$, 
\begin{align}\label{fform}
|F(x)- F(y)| &= \int (1_{\{t\le x\}} - 1_{\{t\le y\}})^2 d\mu(t),
\end{align}
where $\mu$ is the law of $Y$. This is true because the integrand is $1$ between $x$ and $y$ and $0$ outside.

Now suppose that we condition on $X_1,\ldots,X_n$. Since $X_i$ is likely to be very close to $X_{N(i)}$, the random variables $Y_i$ and $Y_{N(i)}$ are likely to be approximately i.i.d.~after this conditioning. This is the second key observation (which is tricky to make rigorous in the absence of any assumptions on the law of $(X,Y)$), which leads to the approximation
\begin{align*}
\ee[(1_{\{t\le Y_i\}} - 1_{\{t\le Y_{N(i)}\}})^2|X_1,\ldots,X_n] &\approx 2\var(1_{\{t\le Y_i\}}|X_1,\ldots,X_n) \\
&= 2\var(1_{\{t\le Y_i\}}|X_i). 
\end{align*}
This gives
\begin{align*}
\ee(1_{\{t\le Y_i\}} - 1_{\{t\le Y_{N(i)}\}})^2 &\approx  2\ee[\var(1_{\{t\le Y\}}|X)]\\
&= 2\var(1_{\{t\le Y\}}) - 2 \var(\ee(1_{\{t\le Y\}}|X)). 
\end{align*}
Combining this with \eqref{fform}, we get
\begin{align*}
\ee|F(Y_i)-F(Y_{N(i)})| &\approx \int 2[\var(1_{\{t\le Y\}}) -\var(\ee(1_{\{t\le Y\}}|X))]d\mu(t). 
\end{align*}
But note that $\var(1_{\{t\le Y\}}) = F(t)(1-F(t))$, and $F(Y)\sim \textup{Uniform}[0,1]$. Thus, 
\[
\int \var(1_{\{t\le Y\}}) d\mu(t) = \int F(t)(1-F(t)) d\mu(t) = \int_0^1 x(1-x)dx = \frac{1}{6}.
\] 
Therefore by \eqref{xiapprox},
\begin{align*}
\ee(\xi_n(X,Y)) \approx 6 \int \var(\ee(1_{\{t\le Y\}}|X))d\mu(t) = \xi(X,Y), 
\end{align*}
where the last identity holds because $\int \var(1_{\{t\le Y\}}) d\mu(t)  = 1/6$, as shown above. This establishes the convergence of $\ee(\xi_n(X,Y))$ to $\xi(X,Y)$. Concentration inequalities are then used to show that $\xi_n(X,Y)-\ee(\xi_n(X,Y))\to 0$ almost surely.

\section{Proof of Theorem \ref{mainthm}}\label{proof1}
Throughout this proof and the rest of the manuscript, we will abbreviate $\xi_n(X,Y)$ as $\xi_n$ and $\xi(X,Y)$ as $\xi$. 
For  $t\in \rr$, let $F(t) := \pp(Y\le t)$ and $G(t) := \pp(Y\ge t)$. Let $\mu$ be the law of $Y$. By the existence of regular conditional probabilities on regular Borel spaces (see for example \cite[Theorem 2.1.15 and Exercise 5.1.16]{durrett10}), for each Borel set $A\subseteq \rr$ there is a measurable map $x \mapsto \mu_x(A)$ from $\rr$ into $[0,1]$, such that 
\begin{enumerate}
\item for any $A$, $\mu_X(A)$ is a version of $\pp(Y\in A|X)$, and 
\item with probability one, $\mu_X$ is a probability measure on $\rr$. 
\end{enumerate}
In the above sense, $\mu_x$ is the conditional law of $Y$ given $X=x$. For each $t$, let $G_x(t) := \mu_x([t, \infty))$, and define
\begin{align}\label{qdef}
Q := \int \var(G_X(t)) d\mu(t). 
\end{align}
(Since $t\mapsto \ee(G_X(t))$ and $t\mapsto \ee(G_X(t)^2)$ are both non-increasing maps, they are measurable. Therefore $t\mapsto \var(G_X(t))$ is also measurable, and so the above integral is well-defined.)
\begin{lmm}\label{indepthm1}
Let $Q$ be as above. Then $Q =0$ if and only if $X$ and $Y$ are independent. 
\end{lmm}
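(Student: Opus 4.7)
The easier direction ($\Leftarrow$) is immediate: if $X$ and $Y$ are independent, then $\mu_X = \mu$ almost surely, so $G_X(t) = G(t)$ a.s.\ for every $t$, $\var(G_X(t)) = 0$, and $Q = 0$.

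For the converse, assume $Q = 0$. Because $\var(G_X(\cdot))$ is a nonnegative measurable function whose $\mu$-integral vanishes, the set $A := \{t \in \mathbb{R} : \var(G_X(t)) = 0\}$ satisfies $\mu(A^c) = 0$; combined with $\ee G_X(t) = G(t)$, this means $G_X(t) = G(t)$ almost surely for each $t \in A$. The goal is to promote this pointwise identity (for each fixed $t$, on a $t$-dependent null event) to the measure identity $\mu_X = \mu$ almost surely, which is exactly the independence of $X$ and $Y$. The plan is to invoke Dynkin's $\pi$--$\lambda$ theorem on the closed set $\bar A$.

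As preliminaries, $\mu(\bar A^c) \le \mu(A^c) = 0$, and taking expectations gives $\ee \mu_X(\bar A^c) = \mu(\bar A^c) = 0$, so $\mu$ and $\mu_X$ are both concentrated on $\bar A$ (the latter almost surely). Choose a countable $T \subseteq A$ that is dense in $\bar A$ (possible because $A$ is dense in its closure) and contains every atom of $\mu$ (atoms automatically lie in $A$). The set of left-isolated points of $\bar A$ is countable (the intervals $(s - \varepsilon_s, s)$ produced by left-isolation are pairwise disjoint open intervals in $\mathbb R$), and the non-atomic ones among them satisfy $\mu(\{s\}) = 0$, hence $\mu_X(\{s\}) = 0$ almost surely. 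A countable union of null events then yields an event $\Omega_0$ of full probability on which (i) $G_X(t) = G(t)$ for every $t \in T$ simultaneously, (ii) $\mu_X(\bar A) = 1$, and (iii) $\mu_X(\{s\}) = \mu(\{s\}) = 0$ for every non-atomic left-isolated $s \in \bar A$.

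On $\Omega_0$, the countable $\pi$-system $\mathcal P := \{\bar A\} \cup \{[t,\infty) \cap \bar A : t \in T\} \cup \{\{s\} : s \text{ non-atomic and left-isolated in } \bar A\}$ lies inside the $\lambda$-system $\{E \in \mathcal B(\bar A) : \mu_X(E) = \mu(E)\}$. Using the left-continuity of survival functions together with the density of $T$ in $\bar A$, one checks that $\sigma(\mathcal P) = \mathcal B(\bar A)$: a left-limit point $s \in \bar A$ is handled by the decreasing intersection $[s,\infty) \cap \bar A = \bigcap_n [t_n,\infty) \cap \bar A$ with $t_n \in T$, $t_n \uparrow s$, while a left-isolated $s \in \bar A$ is either an atom (hence $s \in T$) or non-atomic, in which case $\{s\} \in \mathcal P$ and $[s,\infty) \cap \bar A = \{s\} \cup \bigcup_n [t_n,\infty) \cap \bar A$ with $t_n \in T$, $t_n \downarrow s$ (such $t_n$ exist because left-isolation plus non-membership of $T$ forces $s$ to be a right-limit point of $\bar A$). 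The $\pi$--$\lambda$ theorem then forces $\mu_X = \mu$ on $\mathcal B(\bar A)$, and since both measures vanish on $\bar A^c$, also on $\mathcal B(\mathbb R)$ almost surely, which is the independence of $X$ and $Y$. The main obstacle throughout is reconciling ``a.s.\ for each $t \in A$'' with ``a.s.\ as measures on $\mathbb R$''; consolidating to a countable $T$ handles the $t$-dependent null sets, and passing to traces on $\bar A$ sidesteps the fact that $A$ itself may fail to be dense in $\mathbb R$.
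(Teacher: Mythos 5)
Your proof is correct but takes a genuinely different route from the paper's. The paper handles the hard direction by upgrading the identity $G_X(t) = G(t)$ a.s.\ from $t \in A$ to \emph{every} $t \in \rr$ (still with a $t$-dependent null set), exploiting one-sided continuity and monotonicity of $G$: for a fixed $t \notin A$ with $\mu(\{t\}) = 0$, it either finds $r_n \in A$ with $r_n \downarrow t$ and passes to the limit using $G_X(t) \ge \lim G_X(r_n) = G(t)$ together with $\ee G_X(t) = G(t)$, or identifies a point $s > t$ with $G(s) = G(t)$ to which the same reasoning applies. It then checks the product-measure identity $\pp(Y \ge t, X \in B) = \pp(Y \ge t)\pp(X \in B)$ for each $t, B$ separately, with a monotone-class argument left implicit. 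Your approach avoids the one-sided continuity analysis entirely: you consolidate the $t$-dependent null sets over a countable $T \subseteq A$ dense in $\bar A$, restrict both measures to the trace $\sigma$-algebra on $\bar A$, and apply $\pi$--$\lambda$ to a $\pi$-system tailored to $\bar A$. This is a more portable technique (it does not lean on the special shape of survival functions) at the cost of the bookkeeping around left-isolated points. Two small polish items: you should explicitly include $\emptyset$ in $\mathcal P$ (or note that all pairwise intersections are either in $\mathcal P$ or empty, which the $\lambda$-system trivially contains); and in the non-atomic left-isolated case you should separate ``$s \in T$'' (where $[s,\infty) \cap \bar A \in \mathcal P$ directly — this covers $s$ isolated in $\bar A$, since any dense subset of $\bar A$ contains its isolated points) from ``$s \notin T$'' (where your decomposition with $t_n \downarrow s$ applies). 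Neither gap is serious.
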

\begin{proof}
If $X$ and $Y$ are independent, then for any $t$, $\pp(Y\ge t|X)=\pp(Y\ge t)$ almost surely. Thus, $G_X(t) = G(t)$ almost surely, and so $\var(G_X(t)) =0$. Consequently, $Q=0$. 

Conversely, suppose that $Q=0$. Then there is a Borel set $A\subseteq \rr$ such that $\mu(A)=1$ and $\var(G_X(t))=0$ for every $t\in A$. Since $\ee(G_X(t))=G(t)$, $G_X(t)=G(t)$ almost surely for each $t\in A$.  We claim that $A$ can be chosen to be the whole of $\rr$. 

To show this, take any $t\in \rr$. If $\mu(\{t\})>0$, then clearly $t$ must be a member of $A$ and there is nothing more to prove. So assume that $\mu(\{t\})=0$. This implies that $G$ is right-continuous at $t$. 

There are two possibilities. First, suppose that $G(s)<G(t)$ for all $s>t$. Then for each $s>t$, $\mu([t,s)) >0$, and hence $A$ must intersect $[t,s)$. This shows that there is a sequence $r_n$ in $A$ such that $r_n$ decreases to $t$. Since $G_X(r_n)=G(r_n)$ almost surely for each $n$, this implies that with probability one,
\[
G_X(t) \ge \lim_{n\to \infty} G_X(r_n)= \lim_{n\to\infty}G(r_n)=G(t).
\]
But $\ee(G_X(t))=G(t)$. Thus, $G_X(t) = G(t)$ almost surely. 

The second possibility is that there is some $s>t$ such that $G(s)=G(t)$. Take the largest such $s$, which exists because $G$ is left-continuous. If $s=\infty$, then $G(t)=G(s)=0$, and hence $G_X(t)=0$ almost surely because $\ee(G_X(t))=G(t)$. Suppose that $s<\infty$. Then either $\mu(\{s\})>0$, which implies that $G_X(s)=G(s)$ almost surely, or $\mu(\{s\})=0$ and $G(r)<G(s)$ for all $r>s$, which again implies that $G_X(s)=G(s)$ almost surely, by the previous paragraph. Therefore in either case, with probability one,
\[
G_X(t)\ge G_X(s)=G(s)=G(t).
\]
Since $\ee(G_X(t))=G(t)$, this implies that $G_X(t)=G(t)$ almost surely.

This completes the proof of our claim that for each $t\in \rr$, $G_X(t)=G(t)$ almost surely. Therefore, for any $t\in \rr$ and any Borel set $B\subseteq \rr$,
\begin{align*}
\pp(\{Y\ge t\}\cap \{X\in B\}) &= \ee(\pp(Y\ge t|X) 1_{\{X\in B\}})\\
&= G(t) \pp(X\in B) = \pp(Y\ge t) \pp(X\in B).
\end{align*} 
This proves that $Y$ and $X$ are independent. 
\end{proof}
\begin{cor}\label{ycor}
If $Y$ is not a constant, then $\int G(t)(1-G(t))d\mu(t)>0$. 
\end{cor}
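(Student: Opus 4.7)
The plan is to rewrite the integral as an expectation and reduce the problem to a statement about the event where $G(Y)$ is degenerate. Since $\mu$ is the law of $Y$,
\[
\int G(t)(1-G(t))\,d\mu(t) = \ee\bigl[G(Y)(1-G(Y))\bigr],
\]
and the integrand is pointwise nonnegative. Consequently, the integral vanishes if and only if $G(Y) \in \{0,1\}$ almost surely, so it suffices to rule this out when $Y$ is non-constant.

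Next I would exploit the monotonicity of $G$ together with the essential infimum $m^- := \sup\{a : \pp(Y < a) = 0\}$ and essential supremum $m^+ := \inf\{a : \pp(Y > a) = 0\}$ of $Y$ to locate the level sets. Since $G$ is non-increasing, the set $\{y : G(y) = 0\}$ is upward-closed and contained in $(m^+,\infty)$ (any $y$ in this set forces $\pp(Y \ge y) = 0$, so $y$ must exceed some threshold above which $Y$ cannot be found with positive probability); by definition of $m^+$ this has $\mu$-measure zero, giving $\pp(G(Y) = 0) = 0$. Similarly $G(y) = 1$ forces $\pp(Y < y) = 0$, which means $y \le m^-$, so $\{G(Y) = 1\} \subseteq \{Y \le m^-\}$. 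Under the assumption $G(Y) \in \{0,1\}$ a.s., these two facts combine to give $\pp(Y \le m^-) = 1$.

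To close the argument, I would combine $\pp(Y \le m^-) = 1$ with the bound $\pp(Y \ge m^-) = 1$ (which is just $\pp(Y < m^-) = 0$, immediate from the definition of $m^-$) to deduce $Y = m^-$ almost surely, contradicting the hypothesis that $Y$ is not a constant. The only point requiring a bit of care, rather than being a genuine obstacle, is the handling of distributions for which $m^{\pm}$ is infinite or is not attained by $Y$; in those edge cases the containments used above still hold under the natural conventions $\pp(Y \le -\infty) = \pp(Y > +\infty) = 0$, so the argument goes through verbatim.
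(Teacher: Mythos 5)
Your proof is correct, but it takes a genuinely different and more hands-on route than the paper. The paper dispatches this corollary in one line by specializing Lemma \ref{indepthm1} to the case $X=Y$: there $G_X(t)$ becomes $1_{\{Y\ge t\}}$, so $Q$ becomes exactly $\int G(t)(1-G(t))\,d\mu(t)$, and since a non-constant $Y$ is not independent of itself, $Q>0$ follows immediately. Your argument instead works directly from $\int G(t)(1-G(t))\,d\mu(t)=\ee\bigl[G(Y)(1-G(Y))\bigr]$, observes that vanishing forces $G(Y)\in\{0,1\}$ a.s., and then uses the monotonicity of $G$ together with the essential infimum and supremum of $Y$ to show this collapses $Y$ to a single point. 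The paper's approach is shorter and reuses machinery already built; yours is self-contained and exposes the measure-theoretic reason more concretely. One small imprecision: the set $\{y:G(y)=0\}$ is in general $[m^+,\infty)$ rather than $(m^+,\infty)$ (for instance if $Y$ is uniform on $(0,1)$, $G(1)=0$ and $m^+=1$); but since $\pp(Y\ge m^+)=G(m^+)$ vanishes precisely in that boundary case and $\pp(Y>m^+)=0$ by right-continuity, your conclusion $\pp(G(Y)=0)=0$ is still correct in both cases. Similarly, when $m^-=-\infty$ you cannot literally conclude $Y=m^-$ a.s., but the contradiction $\pp(G(Y)\in\{0,1\})=0$ still obtains, so the proof survives — just not quite ``verbatim'' as you phrase it.
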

\begin{proof}
In Lemma \ref{indepthm1}, take $X=Y$. Then $G_X(t) = 1_{\{X\ge t\}}$, and hence $\var(G_X(t)) = G(t)(1-G(t))$. But if $Y$ is not a constant, then $Y$ is not independent of itself. Hence Lemma \ref{indepthm1} implies that $Q>0$, which gives what we want.
\end{proof}

Let $X_1,X_2,\ldots$ be an infinite sequence of i.i.d.~copies of $X$. For each $n\ge 2$ and each $1\le i\le n$, let $X_{n,i}$ be the element of the set  $\{X_j: 1\le j\le n, j\ne i\}$ that is immediately to the right of $X_i$. If there is no such element, then let $X_{n,i}=X_i$.
\begin{lmm}\label{nnlmm}
With probability one, $X_{n,1}\to X_1$ as $n\to \infty$.
\end{lmm}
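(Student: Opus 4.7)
The plan is to condition on $X_1$ and show that the right neighbor stabilizes for $\mu$-almost every value of $X_1$, where $\mu$ denotes the law of $X$. Introduce the ``right-approachable'' set
\[
G := \{x \in \R : \mu([x, x+\epsilon)) > 0 \text{ for every } \epsilon > 0\}.
\]
This $G$ consists precisely of the atoms of $\mu$ together with those $x$ that are true right-limit points of the support of $\mu$. The strategy is to prove $\mu(G) = 1$ and then argue the convergence conditionally on $X_1 \in G$.

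For the conditional argument, fix $x \in G$ and condition on $X_1 = x$. For each rational $\epsilon > 0$, the events $\{X_j \in [x, x+\epsilon)\}$ for $j \ge 2$ are i.i.d.\ Bernoulli with success probability $\mu([x, x+\epsilon)) > 0$, so infinitely many occur almost surely by the second Borel--Cantelli lemma. Reading $X_{n,1}$ as the smallest element of $\{X_j : 2 \le j \le n,\, X_j \ge X_1\}$ whenever that set is nonempty, the first appearance of some $X_j \in [X_1, X_1 + \epsilon)$ forces $X_1 \le X_{n,1} < X_1 + \epsilon$ for every subsequent $n$. Intersecting over a countable sequence $\epsilon_k \downarrow 0$ delivers $X_{n,1} \to X_1$ almost surely on the event $\{X_1 = x\}$ for each $x \in G$, and integrating against $\mu$ then gives the full-measure conclusion once $\mu(G)=1$ is known.

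To establish $\mu(G) = 1$, write $\R \setminus G = \bigcup_{q \in \Q_+} W_q$ with $W_q := \{x : \mu(\{x\}) = 0 \text{ and } \mu((x, x+q)) = 0\}$, and reduce to showing $\mu(W_q) = 0$ for each rational $q > 0$. Any $x \in W_q$ lies in a maximal interval on which the CDF $F$ of $X$ is constant, and there are only countably many such maximal flat intervals. Within a flat interval $[a,b)$ the total mass of $\mu$ reduces to the (possibly zero) atom $\mu(\{a\})$, since $\mu((a,b)) = 0$ by flatness; but atoms are explicitly excluded from $W_q$. Hence $W_q$ meets each flat interval in a $\mu$-null set, giving $\mu(W_q) = 0$ and, by countable subadditivity, $\mu(\R \setminus G) = 0$. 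The main obstacle is precisely this second step: because $F$ is only right-continuous, the endpoint structure of maximal flat intervals is asymmetric, and one must check in each subcase that the only possible $\mu$-mass inside such an interval is a left-endpoint atom, which $W_q$ rules out---after which the conditional Borel--Cantelli step runs cleanly.
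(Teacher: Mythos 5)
Your proof is correct, and the overall architecture matches the paper's: identify the ``right-approachable'' set, show it has full measure, and then use a Borel--Cantelli-type argument conditionally on $X_1$. The differences are in the two halves. For the measure-theoretic step, the paper restricts attention to $B := A^c \cap \supp(\nu)$ and assigns to each $x\in B$ an interval $[x,y_x)$ with $\nu([x,y_x))=0$; if two such intervals overlapped, the larger point would sit in a $\nu$-null open set, contradicting membership in the support. Disjointness then forces $B$ to be countable and atom-free, hence null. You instead decompose $\rr\setminus G$ as $\bigcup_q W_q$ and observe that each point of $W_q$ lies in a maximal flat interval of $F$ of length $\ge q$, of which there are only countably many, and that the only mass a flat interval $[a,b)$ or $[a,b]$ can carry is a left-endpoint atom (by right-continuity of $F$), which $W_q$ excludes by construction. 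Both yield the same countability conclusion; the paper's route avoids the case analysis on endpoint types and the explicit invocation of the CDF, so it is somewhat shorter, while yours is more concrete about \emph{where} the exceptional points can live. For the convergence step, the paper first derives convergence in probability from the bound $\pp(|X_1-X_{n,1}|\ge\ve\mid X_1)\le(1-\nu([X_1,X_1+\ve)))^{n-1}$ and then upgrades to a.s.\ convergence by noting the sequence $|X_1-X_{n,1}|$ is eventually nonincreasing in $n$; you instead obtain a.s.\ convergence directly by conditioning on $X_1=x$ and applying Borel--Cantelli to the events $\{X_j\in[x,x+\epsilon_k)\}$, then integrating over $x$. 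These are essentially the same observation (once a sample lands in $[X_1,X_1+\epsilon)$ the right neighbor stays within $\epsilon$), packaged differently.
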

\begin{proof}
 Let $\nu$ be the law of $X$. Let $A$ be the set of all $x\in \rr$ such that $\nu([x, y)) >0$ for any $y>x$. First, we show that $\nu(A^c)=0$. Let $K$ be the support of $\nu$ and let $B:=A^c\cap K$. Since $\nu(K^c)=0$, it suffices to show that $\nu(B)=0$. 
 
 Take any $x\in B$. Since $x\in A^c$, there is some $y>x$ such that $\nu([x,y))=0$. For each $x\in B$, choose such a point $y_x$. We claim that the intervals $[x,y_x)$, as $x$ ranges over $B$, are disjoint. To see this, take any distinct $x,x'\in B$, $x<x'$. If $[x,y_x)$ and $[x', y_{x'})$ are not disjoint, then $x'\in (x,y_x)$. But $\nu((x,y_x)) \le \nu([x, y_x)) = 0$. This contradicts the fact that $x'\in K$. Thus, we have established that the intervals $[x,y_x)$ are disjoint. But this implies that there can be at most countably many such intervals. Thus, $B$ is at most countable. But for any $x\in B$, $\nu(\{x\})\le \nu([x,y_x))=0$. This proves that $\nu(B)=0$, and hence $\nu(A^c)=0$.
 
Take any $\ve>0$. Let $I$ be the interval $[X_1, X_1+\ve)$. Then 
\begin{align*}
\pp(|X_1-X_{n,1}|\ge \ve|X_1) &\le (1-\nu(I))^{n-1}.
\end{align*}
Since $X_1\in A$ almost surely, it follows that $\nu(I)>0$ almost surely. Thus,
\begin{align*}
\lim_{n\to\infty} \pp(|X_1-X_{n,1}|\ge \ve|X_1)=0
\end{align*}
almost surely, and hence
\[
\lim_{n\to\infty} \pp(|X_1-X_{n,1}|\ge \ve)=0.
\] 
This proves that $|X_1-X_{n,1}|\to 0$ in probability. But $|X_1-X_{n,1}|$ is decreasing in $n$ after the first time some $X_j$ is drawn that is $\ge X_1$ (and there will always be such a time, since $\nu(I)>0$). Therefore $|X_1-X_{n,1}|\to 0$ almost surely.
\end{proof}

\begin{lmm}\label{cplmm}
For any measurable function $f:\rr \to [0,\infty)$,
\[
\ee(f(X_{n,1}))\le 2\ee(f(X_1)).
\]
\end{lmm}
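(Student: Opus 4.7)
The plan is to prove this by a symmetrization and telescoping argument. The key observation is that although $X_{n,1}$ has a more complicated distribution than $X_1$, the sum $\sum_{i=1}^n f(X_{n,i})$ has a very simple structure when we look at the sample from the point of view of its order statistics.

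First I would use the fact that the pairs $(X_i, X_{n,i})$ are identically distributed across $i$: since $X_1, \ldots, X_n$ are i.i.d. and $X_{n,i}$ is a symmetric functional of $X_i$ together with $\{X_j : j \ne i\}$, we have
\[
\ee(f(X_{n,1})) = \frac{1}{n}\sum_{i=1}^n \ee(f(X_{n,i})) = \frac{1}{n}\ee\!\left(\sum_{i=1}^n f(X_{n,i})\right).
\]
So it suffices to bound $\sum_{i=1}^n f(X_{n,i})$ pathwise by $2\sum_{i=1}^n f(X_i)$.

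Next I would exploit the order statistics. Let $X_{(1)} \le \cdots \le X_{(n)}$ denote the increasing rearrangement (breaking ties in the same way used to define the neighbors $X_{n,i}$). By construction, if $X_i$ has rank $k < n$, then $X_{n,i} = X_{(k+1)}$, while if $X_i = X_{(n)}$ then by the convention in the definition $X_{n,i} = X_i = X_{(n)}$. As $i$ ranges over $\{1,\ldots,n\}$, the rank of $X_i$ ranges over a bijection of $\{1,\ldots,n\}$, so
\[
\sum_{i=1}^n f(X_{n,i}) = \sum_{k=2}^n f(X_{(k)}) + f(X_{(n)}).
\]
Since $f \ge 0$, the first sum is bounded by $\sum_{k=1}^n f(X_{(k)}) = \sum_{i=1}^n f(X_i)$, and the extra term satisfies $f(X_{(n)}) \le \sum_{i=1}^n f(X_i)$ for the same reason. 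Adding these two bounds gives the desired pathwise inequality $\sum_{i=1}^n f(X_{n,i}) \le 2\sum_{i=1}^n f(X_i)$. Taking expectations and dividing by $n$ yields the lemma.

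There is essentially no obstacle beyond careful bookkeeping: the nonnegativity of $f$ is the only analytic input, and the argument works for arbitrary laws of $X$ and any measurable rule for breaking ties, since the telescoping identity above depends only on the rank structure, not on the continuity of the distribution.
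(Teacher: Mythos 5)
Your symmetrization step (reducing $\ee(f(X_{n,1}))$ to $\frac{1}{n}\ee\sum_{i=1}^n f(X_{n,i})$) and the idea of proving a pathwise bound $\sum_{i=1}^n f(X_{n,i})\le 2\sum_{i=1}^n f(X_i)$ are both sound. However, the key rank identity you use to establish the pathwise bound is false when the $X_i$'s have ties, and the lemma is stated without any continuity assumption on the law of $X$ --- indeed, the paper repeatedly emphasizes that no assumptions are made on the distribution --- so ties must be handled.

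The issue is that $X_{n,i}$ is not ``the element ranked one above $X_i$'' under a fixed tie-breaking. Rather, $X_{n,i}$ is the smallest \emph{value} $\ge X_i$ among $\{X_j : j\ne i\}$ (this reading is forced by the paper's own later arguments; under the alternative reading ``smallest value strictly greater than $X_i$'', the lemma is actually false --- e.g.\ for $X$ Bernoulli on $\{1,2\}$ with $\pp(X=1)=0.7$, $n=3$, $f=1_{\{2\}}$, one computes $\ee(f(X_{n,1}))=1.057>2\ee(f(X_1))=0.6$). With that definition, consider $n=3$, $X_1=X_2=1$, $X_3=2$, with the rank assignment $X_{(1)}=X_1$, $X_{(2)}=X_2$, $X_{(3)}=X_3$. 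Then $X_2$ has rank $2<n$, but $X_{n,2}$ is the smallest value $\ge 1$ in $\{X_1,X_3\}$, which is $X_1=1$, not $X_{(3)}=2$. So your claimed identity gives $\sum_i f(X_{n,i}) = f(1)+2f(2)$ whereas the true value is $2f(1)+f(2)$. The two sides disagree, and the ``rank of $X_i$ ranges over a bijection'' argument breaks down because each $X_{n,i}$ points back to a tied value of rank $\le$ rank$(X_i)$ rather than to $X_{(\mathrm{rank}(X_i)+1)}$.

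Your pathwise conclusion $\sum_i f(X_{n,i})\le 2\sum_i f(X_i)$ is in fact \emph{true}, but requires a different argument. The paper proceeds by showing that for each $j$, there is at most one index $i$ with $X_{n,i}=X_j$ and $X_j\ne X_i$ (an injectivity-of-successors argument that works through the closed-interval observation $X_k\notin[X_i,X_j)$). This yields
\[
\sum_{i=1}^n f(X_{n,i})\le \sum_{i=1}^n f(X_i)+\sum_{j=1}^n f(X_j)\cdot\mathbf{1}\{\exists\, i:\ X_{n,i}=X_j,\ X_j\ne X_i\}\le 2\sum_{i=1}^n f(X_i),
\]
from which the lemma follows after symmetrization exactly as you propose. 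In the case where $X$ is continuous (no ties almost surely), your telescoping identity is correct and gives a clean, arguably simpler proof; but the closing sentence of your proposal --- that the argument ``works for arbitrary laws of $X$ and any measurable rule for breaking ties'' --- is not correct as written.
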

\begin{proof}
Consider a particular realization of $X_1,\ldots,X_n$. In this realization, take any $i$ and $j$ such that $X_{n,i}=X_j$ and $X_j\ne X_i$. We claim that for any $j$, there can be at most one such $i$. Take any $k\notin\{i,j\}$. Then $X_k$ cannot lie in the interval $[X_i, X_j)$, because that would contradict the fact that $X_{n,i}=X_j$. If $X_k< X_i$, then $X_{n,k}\ne X_j$ because $X_i$ is closer to $X_k$ on the right than $X_j$. On the other hand, if $X_k> X_j$, then obviously $X_{n,k}\ne X_j$. Thus, we conclude that for any $j$, there can be at most one $i$ such that $X_{n,i}=X_j$ and~$X_i\ne X_j$. 

Now observe that  since $f$ is nonnegative, 
\begin{align*}
\ee(f(X_{n,i})) &\le \ee(f(X_i)) + \ee(f(X_{n,i})1_{\{X_{n,i}\ne X_i\}})\\
&\le \ee(f(X_i)) + \sum_{j=1}^n \ee(f(X_j)1_{\{X_j = X_{n,i},\, X_j\ne X_i\}}).
\end{align*}
Combining the two observations and using symmetry, we get
\begin{align*}
\ee(f(X_{n,1})) &= \frac{1}{n}\sum_{i=1}^n \ee(f(X_{n,i}))\\
&\le \frac{1}{n}\sum_{i=1}^n \ee(f(X_i)) + \frac{1}{n}\sum_{i=1}^n\sum_{j=1}^n \ee(f(X_j)1_{\{X_j = X_{n,i},\, X_j\ne X_i\}})\\
&= \ee(f(X_1)) + \frac{1}{n}\sum_{j=1}^n\ee\biggl(f(X_j)\sum_{i=1}^n 1_{\{X_j = X_{n,i},\, X_j\ne X_i\}}\biggr)\\
&\le \ee(f(X_1)) + \frac{1}{n}\sum_{j=1}^n\ee(f(X_j)) = 2\ee(f(X_1)),
\end{align*}
which completes the proof of the lemma. 
\end{proof}

For the next result, we will need the following version of Lusin's theorem (proved, for example, by combining \cite[Theorem 2.18 and Theorem 2.24]{rudin87}). 
\begin{lmm}[Special case of Lusin's theorem]\label{lusinthm}
Let $f:\rr\to \rr$ be a measurable function and $\nu$ be a probability measure on $\rr$. Then, given any $\ve>0$, there is a compactly supported continuous function $g:\rr\to \rr$ such that $\nu(\{x: f(x)\ne g(x)\}) <\ve$.
\end{lmm}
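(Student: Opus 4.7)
The plan is to prove Lusin's theorem in three stages: a reduction to a bounded, nonnegative, compactly supported $f$; a continuous approximation of indicator functions; and a dyadic recombination.

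\textbf{Stage 1 (reductions).} Writing $f=f^+-f^-$ and handling each piece separately, I may assume $f\ge 0$. Since $\nu$ is a probability measure on the Polish space $\rr$, it is regular; inner regularity supplies a compact $K\subset\rr$ with $\nu(\rr\setminus K)<\ve/4$, and I replace $f$ by $f\cdot\mathbf{1}_K$. Since $\nu(\{f>N\})\to 0$ as $N\to\infty$, truncating $f$ at a sufficiently large height $N$ costs at most $\ve/4$ more in $\nu$-measure; rescaling then lets me assume $0\le f<1$ with $f\equiv 0$ off $K$.

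\textbf{Stage 2 (continuous approximation of indicators).} For any Borel set $A\subset K$, use the regularity of $\nu$ to choose a compact $L\subset A$ and an open $U\supset A$, with $\nu(U\setminus L)$ as small as desired and with $U$ contained in a fixed bounded open neighborhood $V$ of $K$. Urysohn's lemma on the locally compact Hausdorff space $\rr$ then produces a continuous function $h\colon\rr\to[0,1]$ that equals $1$ on $L$ and is compactly supported in $U$; this $h$ agrees with $\mathbf{1}_A$ outside the set $U\setminus L$.

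\textbf{Stage 3 (dyadic recombination).} Decompose $f=\sum_{n=1}^\infty 2^{-n}\mathbf{1}_{A_n}$, where $\mathbf{1}_{A_n}$ is the $n$-th binary digit of $f$ and each $A_n\subset K$ is Borel. Applying Stage 2 to each $A_n$ with tolerance $\ve\cdot 2^{-n-2}$ produces continuous $h_n\colon\rr\to[0,1]$, all supported in the single bounded set $V$, with $h_n=\mathbf{1}_{A_n}$ outside a set $E_n$ of $\nu$-measure at most $\ve\cdot 2^{-n-2}$. Set
\[
g := \sum_{n=1}^\infty 2^{-n}h_n.
\]
Because $|h_n|\le 1$, the series converges uniformly, so $g$ is continuous; every partial sum is supported in the compact set $\overline V$, hence so is $g$. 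Outside $\bigcup_n E_n$, a set of $\nu$-measure less than $\ve/4$, the two series for $f$ and $g$ agree term by term, so $g=f$ there. Combining with the two Stage 1 errors yields $\nu(\{f\ne g\})<\ve$, as required.

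The main obstacle is the uniform bookkeeping in Stage 3: the regularity tolerances must be chosen to shrink geometrically in $n$ so that the bad sets $E_n$ sum to something small, and the approximating open sets $U_n$ must be taken inside a common bounded region so that $g$ inherits compact support from the partial sums. Once this bookkeeping is set up, the remaining ingredients — Urysohn's lemma, regularity of finite Borel measures on Polish spaces, and uniform convergence of the dyadic series — are standard.
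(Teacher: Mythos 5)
Your proof is correct, and it is essentially the standard argument: the paper simply cites Rudin (Theorems 2.18 and 2.24), and Rudin's proof of Lusin's theorem proceeds exactly as you do, by reducing to a bounded nonnegative compactly supported $f$, writing it as a dyadic sum $\sum 2^{-n}\mathbf{1}_{A_n}$, approximating each indicator via regularity plus Urysohn, and recombining with a geometric error budget. The only cosmetic loose end is that the $f^+/f^-$ split and the truncation each consume part of the $\ve$ budget, so the tolerances should be halved accordingly, but this is routine bookkeeping and does not affect the argument.
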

\begin{lmm}\label{nnthm}
For any measurable $f:\rr\to\rr$, $f(X_1)-f(X_{n,1})$ tends to $0$ in probability as $n\to\infty$. 
\end{lmm}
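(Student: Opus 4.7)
The plan is to reduce to the continuous case via Lusin's theorem (Lemma \ref{lusinthm}) and then use Lemma \ref{nnlmm} for the continuous part, with Lemma \ref{cplmm} controlling the pushforward of the bad set under $X_{n,1}$. The key point is that although $f$ may be badly behaved, the ``exceptional'' set on which $f$ disagrees with a nice continuous approximation has small mass under the law $\nu$ of $X$, and Lemma \ref{cplmm} tells us that the law of $X_{n,1}$ is absolutely continuous with respect to $\nu$ with density bounded by $2$.

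Concretely, fix $\delta,\ve>0$. By Lemma \ref{lusinthm} applied to $f$ and $\nu$, there exists a compactly supported continuous function $g:\rr\to\rr$ such that, setting $B:=\{x:f(x)\ne g(x)\}$, we have $\nu(B)<\ve$. Then $\pp(X_1\in B)<\ve$, and applying Lemma \ref{cplmm} to the nonnegative function $1_B$ yields $\pp(X_{n,1}\in B)\le 2\nu(B)<2\ve$. On the complementary event $\{X_1\notin B\}\cap\{X_{n,1}\notin B\}$, we have $f(X_1)=g(X_1)$ and $f(X_{n,1})=g(X_{n,1})$, so
\begin{align*}
\pp(|f(X_1)-f(X_{n,1})|>\delta) &\le \pp(X_1\in B)+\pp(X_{n,1}\in B)+\pp(|g(X_1)-g(X_{n,1})|>\delta)\\
&< 3\ve + \pp(|g(X_1)-g(X_{n,1})|>\delta).
\end{align*}

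Since $g$ is continuous and, by Lemma \ref{nnlmm}, $X_{n,1}\to X_1$ almost surely, we have $g(X_{n,1})\to g(X_1)$ almost surely, hence in probability. Therefore the last probability above tends to $0$ as $n\to\infty$, giving
\[
\limsup_{n\to\infty}\pp(|f(X_1)-f(X_{n,1})|>\delta)\le 3\ve.
\]
Since $\ve>0$ was arbitrary, the limsup is $0$, which is exactly convergence in probability.

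The main conceptual obstacle is that $f$ is only measurable, so one cannot directly invoke continuity of $f$ at $X_1$. The trick is that Lemma \ref{cplmm} is precisely what is needed to say that the nearest-neighbor map does not concentrate on a set of small $\nu$-measure, which allows the Lusin approximation to transfer from the law of $X_1$ to the joint control of both $X_1$ and $X_{n,1}$. No additional estimates are needed; the rest is routine $\ve$-chasing.
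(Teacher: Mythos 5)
Your proof is correct and follows essentially the same route as the paper's: Lusin approximation of $f$ by a compactly supported continuous $g$, Lemma \ref{cplmm} applied to the indicator of the exceptional set to control $\pp(X_{n,1}\in B)$, Lemma \ref{nnlmm} plus continuity of $g$ for the main term, and a union bound. The only superficial difference is that you name the exceptional set $B$ explicitly; the structure of the argument is identical.
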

\begin{proof}
Fix some $\ve>0$. Let $g$ be a function as in Lemma~\ref{lusinthm}, for the given $f$ and $\ve$, and $\nu=$ the law of $X_1$. Then note that for any $\delta >0$, 
\begin{align*}
&\pp(|f(X_1)-f(X_{n,1})|>\delta) \\
&\le \pp(|g(X_1)-g(X_{n,1})|>\delta)+ \pp(f(X_1)\ne g(X_1)) \\
&\qquad + \pp(f(X_{n,1})\ne g(X_{n,1})).
\end{align*}
By Lemma \ref{nnlmm} and the continuity of $g$, 
\[
\lim_{n\to \infty} \pp(|g(X_1)-g(X_{n,1})|>\delta) =0.
\]
By the construction of $g$,
\[
\pp(f(X_1)\ne g(X_1))< \ve.
\]
Finally, by Lemma \ref{cplmm}, 
\[
\pp(f(X_{n,1})\ne g(X_{n,1})) \le 2\pp(f(X_1)\ne g(X_1))\le 2\ve.
\]
Putting it all together, we get
\[
\limsup_{n\to\infty} \pp(|f(X_1)-f(X_{n,1})|>\delta)\le 3\ve.
\]
Since $\ve$ and $\delta$ are arbitrary, this completes the proof of the lemma.
\end{proof}

Let $\pi(i)$ be the rank of $X_i$, breaking ties at random so that $\pi$ is a permutation of $\{1,\ldots, n\}$. Define
\begin{align*}
N(i) := 
\begin{cases}
\pi^{-1}(\pi(i)+1) &\text{ if } \pi(i) < n,\\
i &\text{ if } \pi(i)=n.
\end{cases}
\end{align*}
We will now show that $\pp(X_{n,1}=X_{N(1)})\to 1$ as $n\to \infty$. For that, we need to recall the following formula.
\begin{lmm}\label{binlmm}
If $Z\sim\textup{Binomial}(m,p)$, then 
\[
\ee\biggl(\frac{1}{Z+1}\biggr) = \frac{1-(1-p)^{m+1}}{(m+1)p}.
\]
\end{lmm}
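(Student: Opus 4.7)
The plan is a direct computation from the binomial PMF, using the combinatorial identity $\frac{1}{k+1}\binom{m}{k} = \frac{1}{m+1}\binom{m+1}{k+1}$ to re-index into a truncated binomial expansion.

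First I would write out
\[
\ee\biggl(\frac{1}{Z+1}\biggr) = \sum_{k=0}^{m} \frac{1}{k+1} \binom{m}{k} p^k (1-p)^{m-k}.
\]
Then I would apply the identity above and pull a factor of $1/p$ outside so that the summand acquires the correct power of $p$, giving
\[
\ee\biggl(\frac{1}{Z+1}\biggr) = \frac{1}{(m+1)p}\sum_{k=0}^m \binom{m+1}{k+1} p^{k+1} (1-p)^{(m+1)-(k+1)}.
\]

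Next I would reindex by $j=k+1$, so the sum runs from $j=1$ to $j=m+1$, and complete the binomial expansion by adding and subtracting the missing $j=0$ term:
\[
\sum_{j=1}^{m+1} \binom{m+1}{j} p^j (1-p)^{m+1-j} = (p+(1-p))^{m+1} - (1-p)^{m+1} = 1 - (1-p)^{m+1}.
\]
Substituting back yields the claimed formula. The only case needing a brief comment is $p=0$, where both sides are understood as the limit $1$ (or the identity is stated for $p\in(0,1]$, which is the only regime in which it will be applied in the sequel, since it will be used to control probabilities $\pp(X_{n,1}=X_{N(1)})$ with strictly positive cell probability).

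There is no real obstacle here: the entire proof is a two-line combinatorial manipulation plus the binomial theorem. The ``hard'' step, if any, is just spotting the identity $\frac{1}{k+1}\binom{m}{k} = \frac{1}{m+1}\binom{m+1}{k+1}$, which follows immediately by writing both sides in terms of factorials.
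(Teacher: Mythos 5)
Your proof is correct, but it takes a genuinely different route from the paper. The paper substitutes $x = p/(1-p)$, rewrites the sum as $\frac{(1-p)^m}{x}\sum_{k=0}^m \binom{m}{k}\frac{x^{k+1}}{k+1}$, recognizes the inner sum as $\int_0^x (1+y)^m\,dy$, evaluates the integral, and substitutes back. This is a term-by-term integration (generating function) argument. You instead use the absorption identity $\frac{1}{k+1}\binom{m}{k} = \frac{1}{m+1}\binom{m+1}{k+1}$, re-index to $j = k+1$, and complete a binomial expansion of $(p + (1-p))^{m+1}$. The two arguments are of comparable length and both elementary, but yours is purely combinatorial and avoids the substitution $x = p/(1-p)$, which silently assumes $p \ne 0,1$ (the paper's integral argument degenerates at both endpoints, while yours only has the trivial $p=0$ case, which you correctly flag). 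In the application (Lemma 9.8), the $p_i$ are strictly positive atom masses, so both approaches are fine in context, but yours handles $p=1$ without comment, which is a small tidiness advantage.
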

\begin{proof}
Let $x:= p/(1-p)$. Then
\begin{align*}
\ee\biggl(\frac{1}{Z+1}\biggr) &= \sum_{k=0}^m \frac{1}{k+1}{m\choose k} p^k(1-p)^{m-k}\\
&= \frac{(1-p)^m}{x}\sum_{k=0}^m {m\choose k} \frac{x^{k+1}}{k+1}\\
&=  \frac{(1-p)^m}{x}\int_0^x\sum_{k=0}^m {m\choose k} y^kdy\\
&= \frac{(1-p)^m}{x}\int_0^x(1+y)^m dy = \frac{(1-p)^m}{x}\frac{(1+x)^{m+1} - 1}{m+1}.
\end{align*}
The result is obtained by substituting the value of $x$.
\end{proof}
\begin{lmm}\label{newlmm}
$\pp(X_{n,1}=X_{N(1)})\to 1$ as $n\to \infty$. 
\end{lmm}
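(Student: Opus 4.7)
The plan is to show $\pp(X_{n,1}\ne X_{N(1)})\to 0$ by localizing the disagreement event to the case where $X_1$ has tied copies among $X_2,\ldots,X_n$, and then bounding its probability via the uniform random tie-breaking built into $\pi$. Both $X_{n,1}$ and $X_{N(1)}$ are meant to represent ``the element immediately after $X_1$'': in the absence of ties at $X_1$ they coincide, and the only way they can differ is if the random tie-breaking assigns $X_1$ the largest rank within its tied block, so that $X_{N(1)}$ jumps past every tied copy of $X_1$ while $X_{n,1}$ stays pinned at the value $X_1$.

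Let $A:=\{a:\pp(X=a)>0\}$ (a countable set), $p(a):=\pp(X=a)$ for $a\in A$, and $K:=\#\{j\in\{2,\ldots,n\}:X_j=X_1\}$, so that $K\mid X_1=a\sim\textup{Binomial}(n-1,p(a))$ for $a\in A$. On $\{K=0\}$ (which contains $\{X_1\notin A\}$ up to a null set), $X_1$ has no ties in the sample and $X_{n,1}=X_{N(1)}$ deterministically. On $\{X_1\in A,\,K\ge 1\}$, the $K+1$ tied points occupy $K+1$ consecutive $\pi$-ranks via a uniformly random permutation, so $X_1$ is top-ranked among its tied block with probability exactly $1/(K+1)$; this is the only sub-event producing disagreement, because on it $X_{N(1)}$ is the smallest $X_j>X_1$ while $X_{n,1}=X_1$ (under the natural reading, consistent with the use of $X_{n,i}$ in Lemma \ref{cplmm}, that the element ``immediately to the right of $X_1$'' in the multiset $\{X_j:j\ne 1\}$ is a tied copy, hence has value $X_1$). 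Therefore
\[
\pp\bigl(X_{n,1}\ne X_{N(1)}\,\big|\,X_1,K\bigr)\;\le\;\frac{1_{\{X_1\in A\}}}{K+1}.
\]

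Taking expectations and invoking Lemma \ref{binlmm} with $m=n-1$ and $p=p(a)$ yields $\ee[1/(K+1)\mid X_1=a]=(1-(1-p(a))^n)/(n\,p(a))$ for each $a\in A$, hence
\[
\pp(X_{n,1}\ne X_{N(1)})\;\le\;\sum_{a\in A}p(a)\cdot\frac{1-(1-p(a))^n}{n\,p(a)}\;=\;\frac{1}{n}\sum_{a\in A}\bigl(1-(1-p(a))^n\bigr).
\]
Each summand tends to $0$ as $n\to\infty$ and is dominated by $p(a)$ (via Bernoulli's inequality $(1-p)^n\ge 1-np$); since $\sum_{a\in A}p(a)\le 1$, dominated convergence on the counting measure on $A$ finishes the proof.

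The main obstacle is really interpretive rather than technical: one must fix the meaning of ``immediately to the right of $X_i$'' when $X_i$ has tied copies in the sample. The only convention compatible with Lemma \ref{cplmm} is that $X_{n,i}$ equals $X_i$ in that case, since the nearest sample value on the right of $X_i$ is then $X_i$ itself, realized by a tied copy. With this fixed, the argument reduces to the elementary binomial computation of Lemma \ref{binlmm} combined with dominated convergence.
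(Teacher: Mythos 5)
Your proof is correct and follows essentially the same route as the paper's: both localize the disagreement event to the tied block of $X_1$, bound its probability by $\ee\bigl(1_{\{M\ge 1\}}/(M+1)\bigr)$ via the uniform random tie-breaking, and evaluate the resulting expectation using Lemma~\ref{binlmm}. The only cosmetic difference is that you close the argument by dominated convergence over the countable set of atoms, whereas the paper performs the equivalent manual truncation, bounding the first $k$ terms by $k/n$ and the tail by $\sum_{i>k}p_i$ before sending $n\to\infty$ and then $k\to\infty$.
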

\begin{proof}
Let $x_1,x_2,\ldots$ be the atoms of $X$, with masses $p_1, p_2,\ldots$. Fix a realization of $X_1,\ldots, X_n$. If $X_j\ne X_1$ for all $j\ne 1$, then $X_{n,1}=X_{N(1)}$. Suppose that $X_j=X_1$ for at least one $j\ne 1$.  Let $M$ be the number of such $j$. Then with probability $1/(M+1)$, $\pi(1)$ is the highest among all such $\pi(j)$. If this does not happen, then again $X_{n,1}=X_{N(1)}$. Therefore
\[
\pp(X_{n,1}\ne X_{N(1)}) \le \ee\biggl(\frac{1}{M+1}1_{\{M\ge 1\}}\biggr). 
\]
Now let us condition on $X_1$. If $X_1\notin\{x_1,x_2,\ldots\}$, then $M=0$. If $X_1=x_i$, then conditionally $M\sim\textup{Binomial}(n-1, p_i)$. Therefore by Lemma \ref{binlmm} and the above inequality, we get
\begin{align*}
\pp(X_{n,1}\ne X_{N(1)}) &\le \sum_{i=1}^\infty \frac{1-(1-p_i)^n}{np_i}p_i. 
\end{align*}
Take any $k$. Then by the inequality $(1-x)^n\ge 1-nx$ and the above inequality, 
\begin{align*}
\pp(X_{n,1}\ne X_{N(1)})  &\le \frac{k}{n}+\sum_{i=k+1}^\infty p_i.
\end{align*}
Fixing $k$, and sending $n\to\infty$, we get
\[
\limsup_{n\to\infty}\pp(X_{n,1}\ne X_{N(1)}) \le \sum_{i=k+1}^\infty p_i.
\]
The proof is completed by sending $k\to\infty$. 
\end{proof}
\begin{cor}\label{nncor}
For any measurable $f:\rr\to\rr$, $f(X_1)-f(X_{N(1)})\to 0$ in probability as $n\to\infty$. 
\end{cor}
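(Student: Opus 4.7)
The plan is to combine Lemma \ref{nnthm} and Lemma \ref{newlmm} directly, via a simple union bound. The corollary essentially asserts that $f$ evaluated at the ``next'' $X$ in the rank order agrees in the limit with $f$ evaluated at $X_1$; we already know this for $f$ evaluated at the right-nearest neighbor $X_{n,1}$, and we know that these two indices coincide with probability tending to one. So the proof should be a one-line argument once we unpack the two ingredients.

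More concretely, I would fix $\delta > 0$ and write
\[
\pp(|f(X_1) - f(X_{N(1)})| > \delta) \le \pp(|f(X_1) - f(X_{n,1})| > \delta) + \pp(X_{n,1} \ne X_{N(1)}),
\]
using the fact that on the event $\{X_{n,1} = X_{N(1)}\}$ we have $f(X_{n,1}) = f(X_{N(1)})$. By Lemma \ref{nnthm} the first term on the right tends to $0$ as $n\to\infty$, and by Lemma \ref{newlmm} the second term tends to $0$ as well. Since $\delta > 0$ was arbitrary, this gives convergence of $f(X_1) - f(X_{N(1)})$ to $0$ in probability.

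There is no real obstacle: both supporting lemmas have already been established, and the combination is immediate. The only mild subtlety to keep in mind is that $N(1)$ involves the random tie-breaking of $\pi$, but this is precisely what Lemma \ref{newlmm} addresses, so no additional care is needed.
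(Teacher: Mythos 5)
Your proposal is correct and matches the paper's proof in substance: the paper also deduces $f(X_{n,1})-f(X_{N(1)})\to 0$ in probability from Lemma \ref{newlmm} and adds it to the conclusion of Lemma \ref{nnthm}. Your union-bound decomposition is just a slightly more explicit rendering of the same two-ingredient argument.
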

\begin{proof}
By Lemma \ref{nnthm}, $f(X_1)-f(X_{n,1})\to 0$ in probability. By Lemma \ref{newlmm}, $f(X_{n,1})-f(X_{N(1)})\to 0$ in probability. The claim is proved by adding the two. 
\end{proof}
For each $t\in \rr$, let
\[
F_n(t) := \frac{1}{n}\sum_{i=1}^n1_{\{Y_i\le t\}}, \ \ \ 
G_n(t) := \frac{1}{n}\sum_{i=1}^n1_{\{Y_i\ge t\}}.
\]
Define 
\[
Q_n := \frac{1}{n}\sum_{i=1}^n \min\{F_n(Y_i), F_n(Y_{N(i)})\} - \frac{1}{n}\sum_{i=1}^nG_n(Y_i)^2. 
\]
\begin{lmm}\label{expthm}
Let $Q_n$ be defined as above, and $Q$ be the quantity defined in equation \eqref{qdef}. Then $\lim_{n\to\infty} \ee(Q_n) = Q$. 
\end{lmm}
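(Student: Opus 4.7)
The plan is to expand $\ee[Q_n]$ by exchangeability, replace the empirical distribution functions $F_n, G_n$ with their population limits $F, G$, and then identify the resulting expression with $Q$ by conditioning on the $X_i$'s. Since the pairs $(X_i,Y_i)$ are i.i.d.~and the rule defining $N(i)$ (together with the random tie-breaking of $\pi$) is symmetric in the index, all $n$ summands in each sum are identically distributed, so
\[
\ee[Q_n] = \ee[\min\{F_n(Y_1), F_n(Y_{N(1)})\}] - \ee[G_n(Y_1)^2].
\]
By the Glivenko--Cantelli theorem, $\sup_y|F_n(y)-F(y)| \to 0$ and $\sup_y|G_n(y)-G(y)| \to 0$ almost surely, and since every quantity lies in $[0,1]$, bounded convergence lets me replace $F_n$ by $F$ and $G_n$ by $G$ inside both expectations with an error that vanishes as $n\to \infty$.

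Next, I would invoke the elementary identity $\min\{F(x),F(y)\} = F(\min(x,y)) = \int 1_{\{t\le x\}}1_{\{t\le y\}}\,d\mu(t)$ and use Fubini to rewrite
\[
\ee[\min\{F(Y_1),F(Y_{N(1)})\}] = \int \ee\bigl[1_{\{Y_1\ge t\}}1_{\{Y_{N(1)}\ge t\}}\bigr]\,d\mu(t),
\]
and similarly $\ee[G(Y_1)^2] = \int G(t)^2\,d\mu(t)$. Conditional on $X_1,\ldots,X_n$, the $Y_i$'s are independent with $Y_i\sim\mu_{X_i}$, so on the event $\{N(1)\ne 1\}$, which has probability exactly $1-1/n$ by symmetry of the (randomized) ranks, the inner expectation equals $G_{X_1}(t)G_{X_{N(1)}}(t)$. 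The complementary event contributes at most $O(1/n)$ uniformly in $t$.

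Finally, for each fixed $t$, Corollary \ref{nncor} applied to the measurable function $x\mapsto G_x(t)$ gives $G_{X_{N(1)}}(t) - G_{X_1}(t) \to 0$ in probability, whence bounded convergence yields $\ee[G_{X_1}(t)G_{X_{N(1)}}(t)]\to \ee[G_X(t)^2]$ for each $t$. A second application of bounded convergence (integrating over $t\sim\mu$) then produces
\[
\int \ee[G_{X_1}(t)G_{X_{N(1)}}(t)]\,d\mu(t)\to \int \ee[G_X(t)^2]\,d\mu(t),
\]
and subtracting $\int G(t)^2\,d\mu(t)$ and using $\ee[G_X(t)] = G(t)$ yields $\int \var(G_X(t))\,d\mu(t)=Q$. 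The main obstacle is the passage through Corollary \ref{nncor} in the last step: the function $x\mapsto G_x(t)$ is only measurable, not continuous, so the nearest-neighbor convergence result is indispensable here (this is precisely why Lemmas \ref{nnlmm}--\ref{nnthm} were established via Lusin's theorem). The only other delicate point is bookkeeping the boundary event $\{N(1)=1\}$, which the random tie-breaking makes symmetric and hence of probability exactly $1/n$.
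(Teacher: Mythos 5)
Your proposal follows essentially the same route as the paper's proof: pass from $F_n,G_n$ to $F,G$ via Glivenko--Cantelli and boundedness, rewrite $\min\{F(Y_1),F(Y_{N(1)})\}$ as $\int 1_{\{Y_1\ge t\}}1_{\{Y_{N(1)}\ge t\}}\,d\mu(t)$, condition on the $X_i$'s (and tie-breaking) to factor the inner expectation as $G_{X_1}(t)G_{X_{N(1)}}(t)$, and then invoke Corollary~\ref{nncor} plus bounded convergence. The one small refinement you add is the explicit bookkeeping of the boundary event $\{N(1)=1\}$ (probability $1/n$), which the paper leaves implicit when asserting $\ee(1_{\{Y_1\ge t\}}1_{\{Y_{N(1)}\ge t\}}\mid\mf)=G_{X_1}(t)G_{X_{N(1)}}(t)$; this is a harmless $O(1/n)$ correction and does not change the argument.
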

\begin{proof}
Let  
\[
Q_n' := \frac{1}{n}\sum_{i=1}^n \min\{F(Y_i), F(Y_{N(i)})\} - \frac{1}{n}\sum_{i=1}^nG(Y_i)^2. 
\]
and let 
\[
\Delta_n := \sup_{t\in \rr} |F_n(t)-F(t)| + \sup_{t\in \rr} |G_n(t)-G(t)|.
\]
Then by the triangle inequality, 
\[
|Q_n' - Q_n|\le 3\Delta_n.
\]
On the other hand, by the Glivenko--Cantelli theorem, $\Delta_n \to 0$ almost surely as $n\to\infty$. Since $\Delta_n$ is bounded by $2$, this implies that 
\[
\lim_{n\to \infty} \ee|Q_n'-Q_n|=0.
\]
Thus, it suffices to show that $\ee(Q_n')$ converges to $Q$. First, notice that
\begin{align*}
\min\{F(Y_1),F(Y_{N(1)})\} &= \int 1_{\{ Y_1\ge t\}} 1_{\{Y_{N(1)}\ge t\}}d\mu(t).
\end{align*}
Let $\mf$ be the $\sigma$-algebra generated by the $X_i$'s and the randomness used for breaking ties in the selection of $\pi$. Then for any $t$,
\begin{align*}
\ee(1_{\{Y_1\ge t\}} 1_{\{Y_{N(1)}\ge t\}}|\mf) &= G_{X_1}(t) G_{X_{N(1)}}(t). 
\end{align*}
Now recall that by the properties of the regular conditional probability $\mu_x$, the map $x \mapsto G_x(t)$ is measurable. Therefore by the above identity and Corollary~\ref{nncor}, and the boundedness of $G_x$, we have 
\begin{align*}
\lim_{n\to\infty}\ee (1_{\{Y_1\ge t\}} 1_{\{Y_{N(1)}\ge t\}}) &= \lim_{n\to\infty}\ee (G_{X_1}(t) G_{X_{N(1)}}(t))\\
&= \ee(G_X(t)^2).
\end{align*}
Thus,
\begin{align*}
\lim_{n\to \infty} \ee(Q_n')&=\int_{\rr} (\ee(G_X(t)^2) - G(t)^2) d\mu(t).
\end{align*}
Since $\ee(G_X(t))=G(t)$, this completes the proof of the lemma. 
\end{proof}
\begin{lmm}\label{concthm}
There is a positive universal constant $C$ such that for any $n$ and any $t\ge 0$,
\[
\pp(|Q_n - \ee(Q_n)|\ge t) \le 2e^{-Cnt^2}.
\]
\end{lmm}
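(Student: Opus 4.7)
The plan is to apply McDiarmid's bounded differences inequality. View $Q_n$ as a deterministic function of $n$ independent inputs $Z_i := (X_i, Y_i, U_i)$, $i=1,\ldots,n$, where $U_1,\ldots,U_n$ are i.i.d.\ $\textup{Uniform}[0,1]$ random variables, independent of the sample, used to break ties: specifically, take $\pi(i)$ to be the rank of $(X_i,U_i)$ in lexicographic order. This order is a.s.\ strict and realizes exactly the random tie-breaking in the definition of $\xi_n$. If I can show that replacing any single $Z_k$ by an arbitrary $Z_k'$ perturbs $Q_n$ by at most $c/n$ for some absolute constant $c$, then McDiarmid immediately yields
\[
\pp(|Q_n - \ee Q_n|\ge t) \le 2\exp\Bigl(-\tfrac{2t^2}{n(c/n)^2}\Bigr) = 2\exp(-2nt^2/c^2),
\]
which is the stated inequality with $C := 2/c^2$.

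To verify the bounded difference, split $Q_n = (S_n-T_n)/n$ where
\[
S_n := \sum_{i=1}^n \min\{F_n(Y_i),\, F_n(Y_{N(i)})\}, \qquad T_n := \sum_{i=1}^n G_n(Y_i)^2.
\]
Replacing $Z_k$ alters only one indicator in each empirical CDF, so $F_n$ and $G_n$ each shift uniformly by at most $1/n$. Since each summand of $S_n$ and $T_n$ lies in $[0,1]$ and is $1$-Lipschitz in $F_n$ (respectively, in $G_n$, using $x^2$ being $2$-Lipschitz on $[0,1]$), this uniform shift contributes $O(1)$ across the sum of $n$ terms. The remaining changes come from (i) the $k$-th summand, whose arguments involve the new $Y_k'$, and (ii) those indices $i$ for which $N(i)$ itself is altered by the replacement; both of these contribute $O(1)$ per affected index. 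So it suffices to bound the number of affected indices by an absolute constant.

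This last combinatorial point is where introducing the $U_i$'s pays off: since the lexicographic order on $(X_\cdot,U_\cdot)$ is strict a.s., I can reduce to the distinct-values case. A short case analysis on the sorted sequence shows that when $Z_k$ moves from lex-rank $r$ (old) to lex-rank $r'$ (new), the only indices whose successor $N(\cdot)$ differs before and after are $k$ itself, the old lex-predecessor of $k$, and the new lex-predecessor of $k$ — at most three indices. Hence $|S_n - S_n'| = O(1)$ and $|T_n - T_n'| = O(1)$, giving $|Q_n-Q_n'| \le c/n$. The main obstacle is nothing deep: just the careful handling of boundary cases in this combinatorial claim (ranks $1$ or $n$, the degenerate case $r=r'$, and the convention $N(i)=i$ at the rightmost rank), all of which are routine once the strict lex order is in place.
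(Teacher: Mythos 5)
Your proof is correct, and it takes a genuinely different (and cleaner) route than the paper's for the tie-breaking issue. The paper first proves the bounded-difference estimate under the assumption that $X$ is continuous (so no randomization is needed in defining $\pi$), and then handles the general case by a perturbation-and-limit argument: it replaces each $X_i$ with $X_i + \ve Z_i$ for i.i.d.\ Uniform $Z_i$, applies the continuous-case bound with a constant independent of $\ve$, constructs a coupling of the tie-breaking permutation $\pi$ that stabilizes for $\ve$ smaller than a random threshold $\ve^*$, and lets $\ve\to 0$ using bounded convergence. You instead absorb the tie-breaking randomness directly into the McDiarmid setup by treating $(X_i,Y_i,U_i)$ as a single independent coordinate, where $U_i$ breaks ties via strict lexicographic order on $(X_i,U_i)$; this is distributionally identical to uniform tie-breaking and makes $Q_n$ a function of $n$ i.i.d.\ inputs, so McDiarmid applies in one shot. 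Both arguments hinge on the same key combinatorial fact (replacing a single coordinate alters $N(\cdot)$ at no more than a bounded number of indices, and shifts $F_n,G_n$ uniformly by at most $1/n$), and both implicitly rely on a measure-zero modification to handle the events where a replacement creates an exact tie in the ordering; you acknowledge this by noting the lex order is a.s.\ strict. What you buy is a shorter, one-step argument that avoids the $\ve$-perturbation and the bounded-convergence limit; what the paper's route buys is that it isolates the continuous case as the genuinely deterministic bounded-difference scenario, which some readers may find more transparent.
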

\begin{proof}
Throughout this proof, $C$ will denote any universal constant. The value of $C$ may change from line to line. First, we will prove the claim under the assumption that $X$ has a continuous distribution, so that no randomization is involved in the definitions of $\pi$ and the $N(i)$'s. 

Assume continuity, and suppose that for some $i\le n$, $(X_i,Y_i)$ is replaced by a different value $(X_i', Y_i')$. Then there are at most three indices $j$ such that the value of $N(j)$ changes after the replacement, and exactly one index, $j=i$, where $Y_j$ changes. Moreover, there can be at most one index $j$ such that $N(j)=i$, both before and after the replacement. Lastly, for each $t$, $G_n(t)$ and $F_n(t)$ change by at most $1/n$. This shows that $Q_n$  changes by at most $C/n$ due to this replacement. The result now follows easily by the bounded difference concentration inequality~\cite{mcdiarmid89}.

Now consider the general case. Let $Z_1,\ldots, Z_n$ be i.i.d.~Uniform$[0,1]$ random variables. For each $\ve>0$, define 
\[
X_i^{\ve} := X_i + \ve Z_i.
\]
Define $Q_n^\ve$ using $(X_1^\ve,Y_1),\ldots, (X_n^\ve, Y_n)$, by the same formula that was used for defining $Q_n$ using $(X_1, Y_1),\ldots, (X_n, Y_n)$. Then by the first part we know that
\begin{align}\label{qnetail}
\pp(|Q_n^\ve - \ee(Q_n^\ve)|\ge t) \le 2e^{-Cnt^2},
\end{align}
where the important thing is that $C$ has no dependence on $\ve$. Now construct a random permutation $\pi$ as follows. Given a realization of $X_1,\ldots,X_n$, let
\[
\ve^* := \frac{1}{2}\min\{|X_i-X_j|: 1\le i,j\le n, X_i\ne X_j\}.
\] 
Having produced $\ve^*$ as above, define $\pi$ to be the rank vector of $X_1^{\ve^*},\ldots,X_n^{\ve^*}$. Notice that if $X_i<X_j$ for some $i$ and $j$, then it is guaranteed that $X_i^{\ve^*} < X_j^{\ve^*}$. From this, it is not hard to see that $\pi$ is a rank vector for $X_1,\ldots,X_n$ where ties are broken uniformly at random. On the other hand, the construction also guarantees that $\pi$ is the rank vector $X_1^\ve,\ldots, X_n^\ve$ for all $\ve \le \ve^*$. Thus, if $Q_n$ is defined using this $\pi$, then $Q_n^\ve = Q_n$ for all $\ve\le \ve^*$. Consequently, $Q_n^\ve \to Q_n$ almost surely as $\ve \to 0$. Using the uniform boundedness of $Q_n^\ve$, it is now easy to deduce the tail bound for $Q_n$ from the inequality~\eqref{qnetail}.
\end{proof}
Combining Lemmas \ref{expthm} and \ref{concthm}, we get the following corollary.
\begin{cor}\label{convascor}
As $n\to \infty$, $Q_n\to Q$ almost surely. 
\end{cor}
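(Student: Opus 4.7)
The plan is to combine the two preceding lemmas via a standard Borel--Cantelli argument. By Lemma~\ref{expthm} we already have $\ee(Q_n)\to Q$, so it suffices to establish that $Q_n - \ee(Q_n)\to 0$ almost surely; adding the two convergences then yields the conclusion.

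To get almost sure convergence from the concentration bound in Lemma~\ref{concthm}, I would choose a sequence $t_n \to 0$ slowly enough that the tail probabilities are still summable. A convenient choice is $t_n := n^{-1/3}$ (any sequence with $n t_n^2 \to \infty$ and $\sum e^{-C n t_n^2} < \infty$ works). Then Lemma~\ref{concthm} gives
\begin{equation*}
\sum_{n=1}^\infty \pp(|Q_n - \ee(Q_n)| \ge t_n) \;\le\; \sum_{n=1}^\infty 2 e^{-C n^{1/3}} \;<\; \infty,
\end{equation*}
so by the first Borel--Cantelli lemma, with probability one the event $\{|Q_n - \ee(Q_n)| \ge n^{-1/3}\}$ occurs only finitely often. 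Hence $Q_n - \ee(Q_n) \to 0$ almost surely.

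Combining this with Lemma~\ref{expthm}, we conclude that $Q_n \to Q$ almost surely. There is no real obstacle here: both ingredients have already been supplied, and the only task is to package them via Borel--Cantelli with a suitable rate.
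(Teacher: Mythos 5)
Your proof is correct and is exactly the argument the paper has in mind; the paper states the corollary as an immediate consequence of Lemmas~\ref{expthm} and~\ref{concthm} without spelling out the Borel--Cantelli step, and your choice of $t_n = n^{-1/3}$ (or any sequence with $n t_n^2 \to \infty$ fast enough for summability) fills that gap in the standard way.
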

We are now ready to prove Theorem \ref{mainthm}.
\begin{proof}[Proof of Theorem \ref{mainthm}]
Define 
\begin{equation}\label{sndefi}
S_n := \frac{1}{n}\sum_{i=1}^n G_n(Y_i)(1-G_n(Y_i)), \ \ \ 
S_n' := \frac{1}{n}\sum_{i=1}^n G(Y_i)(1 - G(Y_i)),
\end{equation}
and $\Delta_n := \sup_{t\in \rr} |G_n(t)-G(t)|$.
Then by the triangle inequality, $|S_n - S_n'| \leq 2\Delta_n$, and by the Glivenko--Cantelli theorem,  $\Delta_n\rightarrow 0$ almost surely. But by  the strong law of large numbers,  $S_n' \to \int G(t)(1 - G(t))d\mu(t)$  almost surely as $n\to\infty$, and therefore the same holds for $S_n$. By Corollary \ref{ycor}, this limit is nonzero. Therefore by this and Corollary \ref{convascor}, we get that with probability one,
\[
\lim_{n\to\infty} \frac{Q_n}{S_n} = \xi,
\]
where $\xi$ is the quantity defined in \eqref{xidef}. Now notice that if $\pi$ is the permutation used for rearranging the data in the definition of $\xi_n$, then $nF_n(Y_i) = r_{\pi(i)}$ for all $i$, and $nF_n(Y_{N(i)}) = r_{\pi(i)+1}$ for $i\ne \pi^{-1}(n)$. If $i=\pi(n)$, then $nF_n(Y_i)=nF_n(Y_{N(i)})=r_n$. Therefore
\begin{align*}
\frac{1}{n}\sum_{i=1}^n \min\{F_n(Y_i), F_n(Y_{N(i)})\} &= \frac{1}{n^2}\sum_{i\ne \pi^{-1}(n)} \min\{r_{\pi(i)}, r_{\pi(i)+1}\} + \frac{r_n}{n^2}.
\end{align*}
By the identity $\min\{a,b\} = \frac{1}{2}(a+b - |a-b|)$, this gives 
\begin{align*}
&\frac{1}{n}\sum_{i=1}^n \min\{F_n(Y_i), F_n(Y_{N(i)})\} \\
&= \frac{1}{2n^2}\sum_{i\ne \pi^{-1}(n)} (r_{\pi(i)}+r_{\pi(i)+1} - |r_{\pi(i)}-r_{\pi(i)+1}|) + \frac{r_n}{n^2}\\
&= \frac{1}{n^2} \sum_{i=1}^n r_i -\frac{1}{2n^2}\sum_{i=1}^{n-1}|r_{i+1}-r_i| + \frac{r_n-r_1}{2n^2}. 
\end{align*}
On the other hand,
\begin{align*}
S_n &= \frac{1}{n^3}\sum_{i=1}^n l_i(n-l_i), \ \ \ \frac{1}{n}\sum_{i=1}^n G_n(Y_i)^2 = \frac{1}{n^3}\sum_{i=1}^n l_i^2,  
\end{align*}
and
\begin{align}\label{rlid}
\sum_{i=1}^n r_i &= \sum_{i=1}^n \sum_{j=1}^n 1_{\{Y_{(j)}\le Y_{(i)}\}} = \sum_{j=1}^n \sum_{i=1}^n 1_{\{Y_{(j)}\le Y_{(i)}\}} = \sum_{j=1}^n l_j.
\end{align}
Combining the above observations, we get
\begin{align*}
\frac{Q_n}{S_n} &= \xi_n + \frac{r_n-r_1}{2n^2S_n}.
\end{align*}
In particular, 
\begin{align*}
\biggl|\frac{Q_n}{S_n} - \xi_n\biggr| &\le \frac{1}{2nS_n}.  
\end{align*}
Since $S_n$ converges to a nonzero limit, this proves that $\xi_n \to \xi$ almost surely. Since for each $t$,   
\[
G(t)(1-G(t)) = \var(1_{\{Y\geq t\}})\ge \var(\pp(Y\geq t| X)),
\]
we conclude that $0\le \xi\le 1$.

Lemma \ref{indepthm1} shows that $\xi=0$ if and only if $X$ and $Y$ are independent. On the other hand, if $Y$ is a function of $X$, say $Y = f(X)$ almost surely, then 
\begin{align*}
\int\var (\pp(Y \geq t| X))d\mu(t) &= \int \var (1_{\{f(X)\geq t\}})d\mu(t)\\
&= \int_{\rr} \pp(f(X)\geq t)(1 - \pp(f(X)\geq t)) d\mu(t) \\
&= \int G(t)(1-G(t))d\mu(t),
\end{align*}
which shows that $\xi=1$. Conversely, suppose that $\xi=1$. Then by the law of total variance,
\begin{align*}
0 &= 1-\xi = \int  [\var(1_{\{Y\ge t\}}) - \var(\pp(Y\ge t|X))] d\mu(t)\\
 &= \int \ee(\var(1_{\{Y\ge t\}}|X))d\mu(t)\\
 &= \int \ee(G_X(t)(1-G_X(t))) d\mu(t).
\end{align*}
This implies that $\pp(E)=1$, where $E$ is the event
\begin{equation}\label{edef}
\int G_X(t)(1 - G_X(t)) d\mu(t) = 0.
\end{equation}
Let $A$ be the support of $\mu$. Define
\[
a_x := \sup\{t: G_x(t) = 1\},  \ \ \ b_x := \inf\{t: G_x(t)=0\},
\]
so that $a_x\le b_x$. By the measurability of $x\mapsto G_x(t)$ and the fact that $a_x\ge t$ if and only if $G_x(t)=1$, it follows  that $x \mapsto a_x$ is a measurable map. Similarly, $x\mapsto b_x$ is also measurable.

Now suppose that the event $\{a_X<b_X\}\cap E$ takes place. Since $G_X(t)\in (0,1)$ for all $t\in (a_X, b_X)$, the condition \eqref{edef} implies that $\mu((a_X, b_X)) = 0$. Since $(a_X, b_X)$ is an open interval, this implies that $(a_X, b_X) \subseteq A^c$.  On the other hand, under the given circumstance, we also have $\pp(Y\in (a_X, b_X)|X) > 0$. Thus $\pp(Y\in A^c|X)>0$. 

The above argument shows that if $\pp(\{a_X< b_X\}\cap E)>0$, then $\pp(Y\in A^c)>0$. But this is impossible, since $A$ is the support of $\mu$. Therefore  $\pp(\{a_X< b_X\}\cap E)=0$. But $\pp(E)=1$. Therefore $\pp(a_X = b_X)=1$. Thus, $Y=a_X$ almost surely.  This completes the proof of Theorem \ref{mainthm}. 
\end{proof}

\section{Preparation for the proof of Theorem \ref{cltthm}}\label{prepsec}
In this section we  prove some preparatory lemmas for the proof of Theorem \ref{cltthm}. Recall the numbers $R(i)$ and $L(i)$ defined in equation \eqref{rldef}. Let $\pi$ be a rank vector for the $X_i$'s, chosen uniformly at random from all available choices if there are ties. First, note that since $X$ and $Y$ are independent, $\pi^{-1}$ is a uniform random permutation that is independent of $Y_1,\ldots,Y_n$. Let $\tau:=\pi^{-1}$, and let 
\[
D_n := \sum_{i=1}^{n-1} a_i,
\]
where 
\[
a_i := \min\{R(\tau(i)), R(\tau(i+1))\}. 
\]
Also, for convenience, let
\[
b_{i,j} := \min\{R(i), R(j)\}.
\]
In the following, $O(n^{-\alpha})$ will denote any quantity whose absolute value is bounded above by $Cn^{-\alpha}$ for some universal constant $C$. 
Let $\ee'$, $\var'$ and $\cov'$ denote conditional expectation, conditional variance and conditional covariance given $Y_1,\ldots,Y_n$. 
\begin{lmm}\label{dmean}
\begin{align*}
\ee'(D_n) &= \frac{1}{n}\sum_{i=1}^n L(i)(L(i)-1).
\end{align*}
\end{lmm}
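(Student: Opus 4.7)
The plan is to compute $\ee'(D_n)$ by exploiting the fact that, conditionally on $Y_1,\ldots,Y_n$, the permutation $\tau = \pi^{-1}$ is uniform on $S_n$. By linearity,
\[
\ee'(D_n) = \sum_{i=1}^{n-1}\ee'(a_i) = \sum_{i=1}^{n-1}\ee'[\min\{R(\tau(i)), R(\tau(i+1))\}].
\]
For each fixed $i$, the ordered pair $(\tau(i),\tau(i+1))$ is uniformly distributed over the $n(n-1)$ ordered pairs of distinct indices in $\{1,\ldots,n\}$. Hence $\ee'(a_i)$ does not depend on $i$, and equals $\frac{1}{n(n-1)}\sum_{j\ne k} b_{j,k}$. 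Summing over $i$ gives
\[
\ee'(D_n) = \frac{1}{n}\sum_{j\ne k} b_{j,k}.
\]

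The key combinatorial identity to establish is
\[
\sum_{j\ne k} b_{j,k} = \sum_{\ell=1}^n L(\ell)(L(\ell)-1).
\]
I would prove this by rewriting the minimum as a count. Observe that if $Y_j\le Y_k$, then $\{\ell: Y_\ell\le Y_j\}\subseteq\{\ell: Y_\ell\le Y_k\}$, so $\min\{R(j),R(k)\} = |\{\ell: Y_\ell\le Y_j\}\cap\{\ell: Y_\ell\le Y_k\}|$; the symmetric argument handles $Y_k\le Y_j$. Either way,
\[
b_{j,k} = \sum_{\ell=1}^n 1_{\{Y_\ell \le Y_j\}}1_{\{Y_\ell\le Y_k\}}.
\]
Swapping the order of summation gives
\[
\sum_{j\ne k} b_{j,k} = \sum_{\ell=1}^n \#\{(j,k): j\ne k,\ Y_j\ge Y_\ell,\ Y_k\ge Y_\ell\}.
\]
For each fixed $\ell$, the number of indices $j$ with $Y_j\ge Y_\ell$ is precisely $L(\ell)$, so the number of ordered pairs $(j,k)$ of \emph{distinct} such indices is $L(\ell)(L(\ell)-1)$. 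Combining this identity with the display for $\ee'(D_n)$ above yields the claim.

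The only conceptual subtlety is the rewriting of $\min\{R(j),R(k)\}$ as an intersection count; once this is in place, the rest is Fubini and a straightforward count. In particular, nothing has to be done to handle ties among the $Y_i$'s, since the argument uses only the weak inequalities defining $R$ and $L$.
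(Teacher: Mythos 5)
Your proof is correct and follows essentially the same route as the paper's: both exploit that $(\tau(i),\tau(i+1))$ is conditionally uniform over ordered pairs of distinct indices, then rewrite $b_{j,k}=\min\{R(j),R(k)\}$ as the intersection count $\sum_{\ell}1_{\{Y_\ell\le Y_j\}}1_{\{Y_\ell\le Y_k\}}$ and interchange sums. The only cosmetic difference is that you count the off-diagonal ordered pairs directly to get $L(\ell)(L(\ell)-1)$, whereas the paper sums over all $(j,k)$ to obtain $L(\ell)^2$ and then subtracts the diagonal contribution $\sum_\ell L(\ell)$; these are the same computation.
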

\begin{proof}
Take any $1\le i\le n-1$. Since $(\tau(i), \tau(i+1))$ is uniformly distributed over all pairs $(j,k)$ where $j$ and $k$ are distinct, we have
\begin{align}\label{aiexp}
\ee'(a_i) &= \frac{1}{n(n-1)} \sum_{1\le j\ne k\le n} b_{j,k}
\end{align}
Since $R(i) = \sum_{j=1}^n 1_{\{Y_j\le Y_i\}}$, this gives
\begin{align}
\ee'(a_i) &= \frac{1}{n(n-1)} \sum_{1\le j\ne k\le n}\sum_{l=1}^n 1_{\{Y_l\le Y_j, \, Y_l\le Y_k\}}\notag\\
&= \frac{1}{n(n-1)} \biggl(\sum_{1\le j, k\le n}\sum_{l=1}^n 1_{\{Y_l\le Y_j, \, Y_l\le Y_k\}} - \sum_{j=1}^n\sum_{l=1}^n 1_{\{Y_l\le Y_j\}}\biggr)\notag\\
&= \frac{1}{n(n-1)} \biggl(\sum_{l=1}^n \sum_{1\le j, k\le n}1_{\{Y_l\le Y_j, \, Y_l\le Y_k\}} - \sum_{l=1}^n\sum_{j=1}^n 1_{\{Y_l\le Y_j\}}\biggr)\notag \\
&= \frac{1}{n(n-1)} \biggl(\sum_{l=1}^n L(l)^2 - \sum_{l=1}^nL(l)\biggr).\notag
\end{align}
The proof is now  completed by adding over $i$. 
\end{proof}
\begin{lmm}\label{vardn}
$\var'(D_n)=V_n + O(n^2)$, where 
\begin{align*}
V_n &:= \frac{1}{n}\sum_{p,q=1}^nb_{p,q}^2 - \frac{2}{n^2}\sum_{p,q,r=1}^nb_{p,q} b_{p,r}  +\frac{1}{n^3} \sum_{p,q,r,s=1}^nb_{p,q} b_{r,s}.
\end{align*}
\end{lmm}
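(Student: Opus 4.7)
The plan is to compute $\var'(D_n) = \ee'(D_n^2) - \ee'(D_n)^2$ explicitly by expanding both squares and exploiting the uniformity of $\tau$ on permutations to reduce conditional expectations to averages over ordered tuples of distinct indices. Expanding the first square gives
\[
\ee'(D_n^2) = \sum_{i=1}^{n-1}\ee'(a_i^2) + 2\sum_{1\le i<j\le n-1}\ee'(a_i a_j),
\]
and I would split the cross terms into two sub-cases: if $j=i+1$, then $(\tau(i),\tau(i+1),\tau(i+2))$ is uniform over ordered triples of distinct elements of $\{1,\ldots,n\}$, and if $j\ge i+2$, then $(\tau(i),\tau(i+1),\tau(j),\tau(j+1))$ is uniform over ordered $4$-tuples of distinct elements. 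Counting the number of $(i,j)$ pairs in each sub-case yields the exact identity
\[
\ee'(D_n^2) = \frac{1}{n}\sum_{p\ne q}b_{p,q}^2 + \frac{2}{n(n-1)}\sum_{p,q,r \text{ dist.}}b_{p,q}b_{q,r} + \frac{1}{n(n-1)}\sum_{p,q,r,s \text{ dist.}}b_{p,q}b_{r,s},
\]
and together with $\ee'(D_n)^2 = n^{-2}(\sum_{p\ne q}b_{p,q})^2$ (from the proof of Lemma~\ref{dmean}), this yields an exact formula for $\var'(D_n)$ in terms of four distinct-index sums.

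Next I would pass from distinct-index sums to unrestricted ones by inclusion-exclusion over set partitions of the index tuple. Introduce the shorthand $\Sigma_1 := \sum_{p,q}b_{p,q}$, $\Sigma_2 := \sum_{p,q}b_{p,q}^2$, and $\Sigma_3 := \sum_{p,q,r}b_{p,q}b_{p,r}$ (the three sums appearing in $V_n$), together with the diagonal quantities $T := \sum_p R(p)$, $E_1 := \sum_p R(p)^2$, and $E_2 := \sum_{p,q}R(p)b_{p,q}$. The crude bound $0\le b_{p,q}\le n$ gives $T=O(n^2)$, $E_1,E_2=O(n^3)$, $\Sigma_1=O(n^3)$, $\Sigma_2=O(n^4)$, and $\Sigma_3=O(n^5)$. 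Using the symmetry $b_{p,q}=b_{q,p}$ (which in particular gives $\sum b_{p,q}b_{q,r}=\Sigma_3$), routine partition enumeration produces
\begin{align*}
\sum_{p\ne q}b_{p,q}^2 &= \Sigma_2 - E_1, \\
\sum_{p,q,r \text{ dist.}}b_{p,q}b_{q,r} &= \Sigma_3 - \Sigma_2 - 2E_2 + 2E_1, \\
\sum_{p,q,r,s \text{ dist.}}b_{p,q}b_{r,s} &= (\Sigma_1 - T)^2 - 4\Sigma_3 + 8E_2 + 2\Sigma_2 - 6E_1,
\end{align*}
where the last identity comes from subtracting the six index-overlap cases (four of the form ``one coincidence'' like $p=r$, $q\ne s$, each evaluating to $\Sigma_3 - 2E_2 - \Sigma_2 + 2E_1$, and two of the form $p=r, q=s$ or $p=s, q=r$, each evaluating to $\Sigma_2 - E_1$) from $(\Sigma_1-T)^2$.

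Substituting and collecting, the $\Sigma_2$ contributions from the two higher-order sums cancel, leaving
\[
\var'(D_n) = \frac{\Sigma_2}{n} - \frac{2\Sigma_3}{n(n-1)} + \frac{(\Sigma_1-T)^2}{n^2(n-1)} + \frac{4E_2}{n(n-1)} - \frac{E_1}{n} - \frac{2E_1}{n(n-1)}.
\]
Comparing term by term with $V_n = \Sigma_2/n - 2\Sigma_3/n^2 + \Sigma_1^2/n^3$, the claim reduces to checking that the elementary estimates $\frac{1}{n(n-1)} - \frac{1}{n^2} = O(n^{-3})$ and $\frac{1}{n^2(n-1)} - \frac{1}{n^3} = O(n^{-4})$, when multiplied against $\Sigma_3 = O(n^5)$ and $\Sigma_1^2 = O(n^6)$, contribute only $O(n^2)$. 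The cross term $-2T\Sigma_1/[n^2(n-1)]$ (with $T\Sigma_1 = O(n^5)$) is similarly $O(n^2)$, and the residuals $E_1/n$, $E_1/[n(n-1)]$, $E_2/[n(n-1)]$, $T^2/[n^2(n-1)]$ are all trivially $O(n^2)$.

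The main obstacle is the bookkeeping required to confirm that every term is of the correct order. In particular, the overlap correction $-4\Sigma_3/[n(n-1)]$ arising in the expansion of the four-distinct sum has individual size $n^3$, which would defeat the desired bound if viewed in isolation; the proof depends on its cancellation against the $+2\Sigma_3/[n(n-1)]$ coefficient coming from the three-distinct sum, yielding a net coefficient that matches $V_n$ modulo $O(n^2)$. A parallel cancellation of the $\Sigma_1^2$-type terms between $(\Sigma_1-T)^2/[n(n-1)]$ (from the four-distinct sum) and $(\Sigma_1-T)^2/n^2$ (from $\ee'(D_n)^2$) is essential for keeping those contributions under control. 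Once these two cancellations are arranged, all residual errors are visibly $O(n^2)$ and the lemma follows.
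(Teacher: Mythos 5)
Your argument is correct and takes essentially the same route as the paper's: both exploit the conditional uniformity of $\tau$ to split the variance computation by the gap $|i-j|$ (four, three, or two distinct positions), express the resulting conditional moments as averages over distinct index tuples, and then pass to the unrestricted sums $\Sigma_1,\Sigma_2,\Sigma_3$ that appear in $V_n$, absorbing the discrepancies into an $O(n^2)$ error; the paper organizes this as a per-pair covariance expansion while you do an exact expansion of $\ee'(D_n^2)-\ee'(D_n)^2$ first and the error analysis at the end, which is the same calculation with different bookkeeping. One small slip worth fixing: the crude bound $0\le R(p),\,b_{p,q}\le n$ over $n^2$ index pairs gives $E_2=\sum_{p,q}R(p)b_{p,q}=O(n^4)$, not $O(n^3)$ as you wrote; this is harmless for the conclusion since $E_2/[n(n-1)]$ is still $O(n^2)$.
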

\begin{proof}
Take any $1\le i<j\le n-1$. First, suppose that $i+1<j$. Then $(\tau(i), \tau(i+1), \tau(j), \tau(j+1))$ is uniformly distributed over all quadruples of distinct $(p,q,r,s)$. Thus, 
\begin{align*}
\ee'(a_ia_j) &= \frac{1}{(n)_4} \sideset{}{'}\sum_{p,q,r,s}b_{p,q} b_{r,s},
\end{align*}
where $(n)_4:=n(n-1)(n-2)(n-3)$, and $\sum'$ denotes sum over distinct $p,q,r,s$. Therefore by \eqref{aiexp}, 
\begin{align*}
&\cov'(a_i, a_j) = \frac{1}{(n)_4} \sideset{}{'}\sum_{p,q,r,s}b_{p,q} b_{r,s} - \biggl(\frac{1}{(n)_2} \sideset{}{'}\sum_{p,q}b_{p,q} \biggr)^2\\
&= \biggl(\frac{1}{(n)_4} -\frac{1}{(n)_2^2}\biggr) \sideset{}{'}\sum_{p,q,r,s}b_{p,q}b_{r,s} - \frac{1}{(n)_2^2}\biggl(\biggl( \sideset{}{'}\sum_{p,q}b_{p,q} \biggr)^2 -  \sideset{}{'}\sum_{p,q,r,s}b_{p,q}b_{r,s}\biggr)\\
&= \frac{4n}{(n)_2(n)_4} \sideset{}{'}\sum_{p,q,r,s}b_{p,q}b_{r,s} - \frac{4}{(n)_2^2}\sideset{}{'}\sum_{p,q,r}b_{p,q}b_{p,r} + O(1)\\
&= \frac{4}{n^5} \sum_{p,q,r,s}b_{p,q}b_{r,s} - \frac{4}{n^4}\sum_{p,q,r}b_{p,q}b_{p,r} + O(1).
\end{align*}
Next, suppose that $i+1=j$. Then
\begin{align*}
\cov'(a_i, a_j) &= \frac{1}{(n)_3} \sideset{}{'}\sum_{p,q,r}b_{p,q} b_{p,r} - \biggl(\frac{1}{(n)_2} \sideset{}{'}\sum_{p,q}b_{p,q} \biggr)^2\\
&= \frac{1}{n^3} \sum_{p,q,r}b_{p,q} b_{p,r} - \frac{1}{n^4} \sum_{p,q,r,s}b_{p,q} b_{r,s} + O(n).
\end{align*}
Similarly, if $i=j$, then 
\begin{align*}
\cov'(a_i, a_j) &= \frac{1}{(n)_2} \sideset{}{'}\sum_{p,q}b_{p,q}^2 - \biggl(\frac{1}{(n)_2} \sideset{}{'}\sum_{p,q}b_{p,q} \biggr)^2\\
&= \frac{1}{n^2} \sum_{p,q}b_{p,q}^2 - \frac{1}{n^4} \sum_{p,q,r,s}b_{p,q} b_{r,s} + O(n).
\end{align*}
The proof is  completed by adding up $\cov'(a_i,a_j)$ over all $1\le i,j\le n-1$.
\end{proof}
\begin{lmm}\label{vnlim}
As $n\to \infty$, $\var'(D_n)/n^3$ converges almost surely to the deterministic limit
\begin{align*}
\ee(\phi(Y_1,Y_2)^2- 2\phi(Y_1,Y_2)\phi(Y_1,Y_3) + \phi(Y_1, Y_2)\phi(Y_3,Y_4)),
\end{align*}
where $\phi(y, y') := \min\{F(y), F(y')\}$ and $Y_1, Y_2, Y_3,Y_4$ are i.i.d.~copies of $Y$.
\end{lmm}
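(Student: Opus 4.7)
The plan is to reduce the statement to a strong law of large numbers argument after rewriting $V_n/n^3$ in empirical-CDF form. By Lemma \ref{vardn}, $\var'(D_n)/n^3 = V_n/n^3 + O(1/n)$, so it suffices to treat $V_n/n^3$. The key observation is that $b_{p,q} = \min\{R(p), R(q)\} = n \min\{F_n(Y_p), F_n(Y_q)\} = n\, \phi_n(Y_p,Y_q)$, where $F_n$ is the empirical CDF of the $Y_i$'s and $\phi_n(y,y') := \min\{F_n(y),F_n(y')\}$. Substituting, the three sums in $V_n$ become
\begin{align*}
\frac{V_n}{n^3} &= \frac{1}{n^2}\sum_{p,q=1}^n \phi_n(Y_p,Y_q)^2 \;-\; \frac{2}{n^3}\sum_{p,q,r=1}^n \phi_n(Y_p,Y_q)\phi_n(Y_p,Y_r) \\
&\qquad +\; \frac{1}{n^4}\sum_{p,q,r,s=1}^n \phi_n(Y_p,Y_q)\phi_n(Y_r,Y_s).
\end{align*}

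Next, I would replace $\phi_n$ by $\phi$ using the Glivenko--Cantelli theorem. Since $\phi_n, \phi \in [0,1]$ and $|\phi_n(y,y') - \phi(y,y')|\le 2\Delta_n$ with $\Delta_n := \sup_t |F_n(t)-F(t)| \to 0$ almost surely, each product $\phi_n(Y_p,Y_q)\phi_n(Y_r,Y_s)$ differs from $\phi(Y_p,Y_q)\phi(Y_r,Y_s)$ by at most $4\Delta_n$. Hence replacing every $\phi_n$ by $\phi$ in the three displayed sums changes $V_n/n^3$ by at most $C\Delta_n \to 0$ a.s.

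It then remains to show that the three normalized sums with $\phi$ in place of $\phi_n$ converge almost surely to $\ee\phi(Y_1,Y_2)^2$, $\ee\phi(Y_1,Y_2)\phi(Y_1,Y_3)$, and $\ee\phi(Y_1,Y_2)\phi(Y_3,Y_4)$, respectively. Each is a normalized sum of a bounded symmetric kernel over $k$-tuples of indices $(k=2,3,4)$. After subtracting the $O(n^{k-1})$ diagonal terms (where two or more indices coincide), which contribute $O(1/n)$ to the normalized sum, what remains is a standard $V$-statistic (equivalently a $U$-statistic up to the diagonal correction), and Hoeffding's strong law for $U$-statistics gives almost sure convergence to the stated expectations. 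Combining the three limits with the signs from the expression for $V_n$ yields the desired limit.

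The only mildly delicate point will be the Glivenko--Cantelli replacement step, since one must verify that the $\Delta_n$-error is uniform across all $p,q,r,s$ and survives the $n^{-k}$ normalization; but this is immediate from the $L^\infty$ bound on $\phi_n - \phi$ and the fact that each sum has exactly $n^k$ terms. No further obstacles are expected: the rest is bookkeeping and a direct appeal to the strong law for $U$-statistics applied to the bounded kernels $\phi(y,y')^2$, $\phi(y,y')\phi(y,y'')$, and $\phi(y,y')\phi(y'',y''')$.
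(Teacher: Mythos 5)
Your proof is correct, but it takes a genuinely different route from the paper's. The paper first applies the bounded-difference (McDiarmid) inequality directly to $V_n$ as a function of $Y_1,\ldots,Y_n$, noting that changing one $Y_i$ perturbs $V_n$ by $O(n^2)$, which yields $\pp(|V_n-\ee V_n|\ge t)\le 2e^{-Ct^2/n^5}$ and hence $(V_n-\ee V_n)/n^3\to 0$ almost surely by Borel--Cantelli; it then uses Glivenko--Cantelli plus dominated convergence to identify $\lim \ee(V_n)/n^3$. You instead first replace $\phi_n$ by $\phi$ uniformly (Glivenko--Cantelli, error $O(\Delta_n)\to 0$ a.s.), and then apply Hoeffding's SLLN for $U$-statistics (after the standard diagonal-removal step turning the $V$-statistics into $U$-statistics up to an $O(1/n)$ correction) to the three resulting sums with the fixed, bounded kernels. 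Both arguments are valid; yours avoids McDiarmid entirely in favour of the classical $U$-statistic law of large numbers and is arguably more streamlined, while the paper's route gives explicit exponential concentration which is in the same spirit as the concentration bound it establishes for $Q_n$ elsewhere. Two trivial cosmetic remarks: $|\phi_n-\phi|\le\Delta_n$ (not $2\Delta_n$), so the product error is really $\le 2\Delta_n$ — your $4\Delta_n$ is a harmless overcount; and the kernel $\phi(y,y')\phi(y,y'')$ is only symmetric in its last two arguments, but this does not matter since one can symmetrize without changing the $V$-statistic or its limit.
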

\begin{proof}
Throughout this proof, $C$ will be used to denote any universal constant. 
Let $V_n$ be as in Lemma \ref{vardn}. It is a function of the $Y_i$'s only. Notice that if one $Y_i$ is replaced by some other value $Y_i'$, then each $R(j)$ changes by at most $1$ for $j\ne i$, and $R(i)$ changes by at most $n$. Therefore $b_{p,q}$ changes by at most $1$ if $p\ne i$ and $q\ne i$, and by at most $n$ if one or both of the indices are equal to $i$. Moreover, the $b_{pq}$'s are all bounded by $n$. Thus, changing one $Y_i$ to $Y_i'$ changes $V_n$ by at most $Cn^2$. Therefore by the bounded difference inequality,
\begin{align*}
\pp(|V_n-\ee(V_n)|\ge t)\le 2e^{-Ct^2/n^5}
\end{align*}
for every $t$. Consequently, $(V_n-\ee(V_n))/n^3\to 0$ almost surely as $n\to\infty$.

On the other hand, note that $b_{p,q}/n = \min\{F_n(Y_p), F_n(Y_q)\}$, where $F_n$ is the empirical distribution function of the $Y_i$'s. By the Glivenko--Cantelli theorem, $F_n\to F$ uniformly with probability one, where $F$ is the cumulative distribution function of $Y$. From this, it is easy to see that $\ee(V_n)/n^3$ converges to the displayed limit.
\end{proof}
\begin{lmm}\label{vnpos}
If $Y$ is not a constant, the limit in Lemma \ref{vnlim} is strictly positive.
\end{lmm}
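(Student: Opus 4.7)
The plan is to recast the limit of Lemma~\ref{vnlim} as the squared $L^2(\mu\times\mu)$-norm of a natural kernel, which makes non-negativity obvious and reduces strict positivity to showing that the kernel is not $\mu\times\mu$-a.e.\ zero. The key identity is the representation
\[
\phi(y_1,y_2) = F(y_1\wedge y_2) = \int h_t(y_1)h_t(y_2)\,d\mu(t), \qquad h_t(y):=1_{\{y\ge t\}},
\]
which turns every term in the limit into a double integral against $\mu\times\mu$ that can be evaluated by independence and Fubini.

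Carrying out this computation, I would obtain
\begin{align*}
\ee[\phi(Y_1,Y_2)^2] &= \iint M(s,t)^2\,d\mu(s)\,d\mu(t),\\
\ee[\phi(Y_1,Y_2)\phi(Y_1,Y_3)] &= \iint M(s,t)\,G(s)G(t)\,d\mu(s)\,d\mu(t),\\
\ee[\phi(Y_1,Y_2)\phi(Y_3,Y_4)] &= \iint G(s)^2 G(t)^2\,d\mu(s)\,d\mu(t),
\end{align*}
where $M(s,t):=\ee[h_s(Y)h_t(Y)]=G(s\vee t)$. Substituting these into the limit of Lemma~\ref{vnlim} and completing the square gives
\[
L \;=\; \iint \bigl[M(s,t)-G(s)G(t)\bigr]^2\,d\mu(s)\,d\mu(t) \;=\; \iint \rho(s,t)^2\,d\mu(s)\,d\mu(t),
\]
with $\rho(s,t):=\cov(1_{\{Y\ge s\}},1_{\{Y\ge t\}})$. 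In particular $L\ge 0$ unconditionally, so the content of the lemma is strict positivity.

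For that, I would observe that $\rho(t,t)=G(t)(1-G(t))$, so it is enough to find $t_0\in\supp(\mu)$ with $0<G(t_0)<1$. If $Y$ is not a constant, $\supp(\mu)$ contains at least two points; picking $t_0\in\supp(\mu)$ strictly above the infimum of the support yields $G(t_0)>0$ (by definition of the support) and $G(t_0)<1$ (since $\pp(Y<t_0)$ is bounded below by the mass of a neighborhood of the infimum). If $\mu(\{t_0\})>0$, then already $L\ge \rho(t_0,t_0)^2\,\mu(\{t_0\})^2>0$. Otherwise $G$ is continuous at $t_0$, so $\rho(s,t)=G(s\vee t)-G(s)G(t)$ is continuous at $(t_0,t_0)$; hence $\rho(s,t)\ge \tfrac{1}{2}\rho(t_0,t_0)>0$ on some open box $B\times B$ around $(t_0,t_0)$, and since $t_0\in\supp(\mu)$ we have $\mu(B)>0$, so $L\ge \tfrac14\rho(t_0,t_0)^2\mu(B)^2>0$.

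The only step requiring any care is the last one: handling atoms versus non-atoms separately when transferring positivity of $\rho$ on the diagonal into positivity on a $\mu\times\mu$-positive set. Everything else is algebra and an application of Fubini; no concentration, regularity, or quantitative estimate is needed beyond what is already available.
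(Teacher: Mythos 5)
Your proposal is correct in outline and takes a genuinely different route from the paper. The paper writes the limit $v$ in the form
\[
v = \ee\bigl(\phi(Y,Y')-\psi(Y)-\psi(Y')+m\bigr)^2,
\]
where $\psi(y)=\ee\phi(y,Y')$ and $m=\ee\phi(Y,Y')$, then argues by contradiction: if $v=0$ then $\phi(Y_1,Y_2)=\psi(Y_1)+\psi(Y_2)-m$ a.s.; taking a minimum over many i.i.d.\ copies forces $\psi(Y)$ to be a.s.\ constant, hence $\phi(Y_1,Y_2)$ a.s.\ constant, hence $F(Y)$ a.s.\ constant and in fact equal to $1$, and a final support argument produces the contradiction. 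Your version instead pushes the bilinearity through the integral representation $\phi(y_1,y_2)=\int 1_{\{y_1\ge t\}}1_{\{y_2\ge t\}}\,d\mu(t)$, arriving at
\[
v=\iint \rho(s,t)^2\,d\mu(s)\,d\mu(t),\qquad \rho(s,t)=\cov\bigl(1_{\{Y\ge s\}},1_{\{Y\ge t\}}\bigr),
\]
which makes nonnegativity manifest and lets you argue strict positivity \emph{directly}, rather than by contradiction, by exhibiting a $\mu\times\mu$-positive set on which $\rho$ is bounded away from zero. This is cleaner and more transparent; the paper's decomposition is shorter to write down but needs a somewhat delicate limiting argument (the ``minimum over $n$ copies'' step) to derive the contradiction.

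There is one small gap in your argument: the claim that any $t_0\in\supp(\mu)$ strictly above $\inf\supp(\mu)$ satisfies $G(t_0)>0$ ``by definition of the support'' is not correct. If $t_0$ is the supremum of the support and $\mu(\{t_0\})=0$ (e.g.\ $\mu$ uniform on $[0,1]$ and $t_0=1$), then $t_0\in\supp(\mu)$ and $t_0>\inf\supp(\mu)$ but $G(t_0)=0$. The fix is easy and does not disturb the rest of the argument: observe that $\rho(t,t)=G(t)(1-G(t))$, and since $Y$ is not constant, $\int G(t)(1-G(t))\,d\mu(t)>0$ (this is exactly Corollary~\ref{ycor}). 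Hence the set $\{t: 0<G(t)<1\}$ has positive $\mu$-measure; intersecting it with $\supp(\mu)$ (a set of full $\mu$-measure) gives a nonempty set from which to choose $t_0$. With that choice of $t_0\in\supp(\mu)$ satisfying $0<G(t_0)<1$, your case analysis (atom at $t_0$ versus continuity of $G$ at $t_0$) goes through and completes the proof.
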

\begin{proof}
 Let us denote the limit by $v$. 
Let $Y'$ be an independent copy of $Y$, and define
\[
\psi(y) := \ee(\phi(y,Y')) = \ee(\phi(Y,Y')|Y=y). 
\]
Also, let $m := \ee(\phi(Y,Y')) = \ee(\psi(Y))$. Then $v$ can be expressed as
\begin{equation}\label{vform1}
v = \ee(\phi(Y, Y')^2) - 2\ee(\psi(Y)^2) + m^2. 
\end{equation}
Now, 
\begin{align*}
&\ee(\phi(Y,Y') - \psi(Y)-\psi(Y')+ m)^2\\
&= \ee(\phi(Y,Y')^2 + \psi(Y)^2 + \psi(Y')^2 + m^2 - 2\phi(Y,Y')\psi(Y) \\
&\qquad - 2\phi(Y,Y') \psi(Y') + 2\phi(Y,Y')m + 2\psi(Y)\psi(Y') \\
&\qquad - 2\psi(Y)m - 2\psi(Y')m).
\end{align*}
Note that $\ee(\phi(Y,Y')\psi(Y)) =\ee(\psi(Y)^2)$, and recall that $\ee(\phi(Y,Y')) = \ee(\psi(Y))=m$. The same identities hold if we exchange $Y$ and $Y'$. Using these facts, it is now easy to verify that the above expression is actually equal to the right side of \eqref{vform1}. Thus,
\[
v = \ee(\phi(Y,Y') - \psi(Y)-\psi(Y')+ m)^2.
\]
Hence $v\ge 0$, and $v=0$ if and only if $\phi(Y,Y') = \psi(Y)+\psi(Y')-m$ almost surely. Suppose that this is true. Then almost surely for each $i\ge 2$, 
\begin{equation}\label{phieq}
\phi(Y_1, Y_i) = \psi(Y_1) + \psi(Y_i) - m,
\end{equation}
where $Y_1,Y_2,\ldots$ are i.i.d.~copies of $Y$. 
Taking the minimum over $2\le i\le n$ on both sides, we get
\begin{align*}
\min\{F(Y_1),\ldots, F(Y_n)\} = \psi(Y_1) + \min\{\psi(Y_2),\ldots,\psi(Y_n)\} - m. 
\end{align*}
Now, the minimum of a sequence of i.i.d.~bounded random variables converges almost surely to the infimum of the support. Also, $F$ and $\psi$ are bounded functions. Therefore taking $n\to\infty$ on both sides of the above, it follows that $\psi(Y_1)$ equals a constant almost surely. Therefore $\psi(Y_2)$ equals the same constant almost surely, and hence by \eqref{phieq}, $\phi(Y_1,Y_2)$ is also equal to a constant almost surely. Now, if $L(t) := \pp(F(Y)\ge t)$, then $\pp(\phi(Y_1, Y_2)\ge t) = L(t)^2$. Since $\phi(Y_1,Y_2)$ is a constant, this shows that $L(t)^2$ is $0$ or $1$ for every $t$, and hence $L(t)$ is also $0$ or $1$ for every $t$. Consequently, $F(Y)$ is a constant almost surely. 

We claim that $1$ is in the support of $F(Y)$ and hence $F(Y)=1$ almost surely. To see this, take any $\ve\in (0,1)$. We will show that $\pp(F(Y)> 1-\ve) >0$. Let $x:= \inf\{y: F(y)\ge 1- \ve/2\}$. Then $x$ is a finite real number since $F$ tends to $1$ at $\infty$ and to $0$ at $-\infty$. By the right-continuity of $F$, $F(x)\ge 1-\ve/2$. If $F$ is discontinuous at $x$, this immediately shows that $\pp(F(Y)>1-\ve) \ge \pp(Y=x)>0$. If $F$ is continuous at $x$, there is some $y<x$ such that $F(y)>1-\ve$. By the definition of $x$, $F(y)< F(x)$. Thus, $\pp(F(Y)>1-\ve)\ge \pp(Y\in (y,x)) >0$. This shows that $1$ is in the support of $F(Y)$, and hence $F(Y)=1$ almost surely. 

Since $Y$ is not a constant, there are at least two points in its support. Therefore there exist two disjoint nonempty open intervals $I$ and $J$ such that $\pp(Y\in I)$ and $\pp(Y\in J)$ are both positive. Suppose that $I$ is to the left of $J$. Then for any $y\in I$, $F(y)\le 1-\pp(Y\in J) <1$, and hence $\pp(F(Y)<1)\ge \pp(Y\in I)>0$, which contradicts the conclusion of the previous paragraph. This shows that $v>0$.
\end{proof}

\section{Proof of Theorem \ref{cltthm}}
We will continue with the notations from Section \ref{prepsec}. Let $\sigma^2$ denote the limit of $\var'(D_n)/n^3$, which by Lemmas \ref{vnlim} and \ref{vnpos}, is a deterministic positive quantity (it was called $v$ in the proof of Lemma \ref{vnpos}). Define
\[
\tilde{D}_n := \frac{D_n-\ee'(D_n)}{n^{3/2}\sigma}. 
\]
Notice that $r_i = R(\tau(i))$. Therefore by Lemma \ref{dmean}, the identity \eqref{rlid}, and the identity $\min\{a,b\}=\frac{1}{2}(a+b-|a-b|)$, we get
\begin{align*}
D_n-\ee'(D_n) &= \sum_{i=1}^{n-1}\min\{r_i, r_{i+1} \} - \frac{1}{n}\sum_{i=1}^nL(i)(L(i)-1)\\
&= \frac{1}{2}\sum_{i=1}^{n-1} (r_i+r_{i+1}-|r_{i+1}-r_i|) - \frac{1}{n}\sum_{i=1}^nl_i(l_i-1)\\
&= \sum_{i=1}^n r_i  - \frac{r_1+r_n}{2} - \frac{1}{2}\sum_{i=1}^{n-1} |r_{i+1}-r_i| - \frac{1}{n}\sum_{i=1}^nl_i(l_i-1) \\
&= \frac{1}{n}\sum_{i=1}^n l_i(n-l_i) - \frac{1}{2}\sum_{i=1}^{n-1} |r_{i+1}-r_i| + O(n). 
\end{align*}
This shows that
\begin{align*}
\xi_n 
&= \frac{D_n-\ee'(D_n)}{n^2S_n}+ O\biggl(\frac{1}{nS_n}\biggr) = \frac{\sigma}{\sqrt{n}S_n} \tilde{D}_n + O\biggl(\frac{1}{nS_n}\biggr),
\end{align*}
where $S_n$ is the quantity defined in \eqref{sndefi}. In the proof of Theorem \ref{mainthm}, we showed that $S_n \to \int G(t)(1-G(t)d\mu(t)$ almost surely, and the latter quantity is positive by Corollary \ref{ycor}.  Thus, to prove the central limit theorem for $\sqrt{n}\xi_n$, it suffices to prove the central limit theorem for $\tilde{D}_n$. The formula for the  limiting variance $\tau^2$ can be read off from the limit of $S_n$ and the formula for $\sigma$. The limiting variance is strictly positive by Lemma~\ref{vnpos}. When $Y$ is continuous, $F(Y)\sim \textup{Uniform}[0,1]$. Using this fact, an easy calculation shows that $\tau^2=2/5$.

The central limit theorem for $\tilde{D}_n$ can be proved by mimicking the proof of the main theorem of the paper \cite{cbl93}. First, replace $D_n$ by 
\begin{align*}
D_n' := \sum_{i=1}^n \min\{R(\tau(i)), R(\tau(i+1))\},
\end{align*}
where $\tau(n+1):= \tau(1)$. Since $|D_n'-D_n|\le n$,   it suffices to prove that $\tilde{D}'_n\to N(0,1)$ in distribution, where
\[
\tilde{D}_n' := \frac{D_n'-\ee'(D_n')}{n^{3/2}\sigma}. 
\]
Mimicking the main idea of \cite{cbl93}, we define
\[
f(\tau(i+1)) := \ee'(\min\{R(\tau(i)), R(\tau(i+1))\}|\tau(i+1)),
\]
and observe that 
\[
\ee'(D_n') = n \ee [f(\tau(1))] = \sum_{i=1}^n f(i) = \sum_{i=1}^n f(\tau(i)).
\]
Thus, 
\[
\tilde{D}_n' = \frac{\sum_{i=1}^n \beta_i}{n^{3/2}\sigma},
\]
where $\beta_i := \min\{R(\tau(i)), R(\tau(i+1))\} - f(\tau(i))$. Since $|D_n-D_n'|\le n$,   $\var'(D_n')/n^3$ converges almost surely to $\sigma^2$. Using these observations, we can proceed exactly as in the proof of the main theorem of \cite{cbl93} to show that for every integer $k\ge 1$,
\begin{align}\label{dneq}
\ee'[(\tilde{D}_n')^k] \to  \ee(Z^k)  \text{ almost surely as $n\to\infty$,}
\end{align}
where $Z\sim N(0,1)$. On the other hand, a simple argument using the bounded difference inequality (viewing $\tau$ as the rank vector of i.i.d.~random variables from any continuous distribution) shows that for any $k$, 
\[
\sup_{n\ge 1} \ee|\tilde{D}_n'|^k <\infty. 
\]
Therefore by \eqref{dneq} and uniform integrability, we conclude that for every integer $k\ge 1$, 
\begin{align*}
\lim_{n\to\infty}\ee[(\tilde{D}_n')^k] =  \ee(Z^k).
\end{align*}
This completes the proof of Theorem \ref{cltthm}.

\section{Proof of Theorem \ref{estthm}}
The quantity $S_n$ define in \eqref{sndefi} is the same as $d_n$, and in the proof of Theorem \ref{mainthm} we showed that $S_n$ converges to the square-root of the denominator in the definition of $\tau^2$. Recall the quantity $V_n$ from Lemma \ref{vardn}. By Lemma \ref{vnlim}, we know $V_n/n^3$ converges almost surely to the numerator in the definition of $\tau^2$. We will now show that $a_n-2b_n+c_n^2$ is the same as $V_n/n^3$. 

From the definition of $V_n$, it is easy to see that the result will remain unchanged if we permute the $R(i)$'s and recompute $V_n$. So we can replace the $R(i)$'s by an increasing rearrangement $u_1,\ldots, u_n$. Redefine 
\[
b_{ij} := \min\{u_i, u_j\} = u_{\min\{i,j\}}.
\]
Then it is clear that
\begin{align*}
\sum_{i,j} b_{ij} &= \sum_{i=1}^n u_i + 2\sum_{1\le i<j\le n} b_{ij} \\
&= \sum_{i=1}^n u_i + 2\sum_{i=1}^n (n-i)u_i = \sum_{i=1}^n (2n-2i+1)u_i.
\end{align*}
Similarly,
\[
\sum_{i,j}b_{ij}^2 =  \sum_{i=1}^n (2n-2i+1)u_i^2.
\]
Finally,
\begin{align*}
\sum_{i,j,k} b_{ij}b_{ik} &= \sum_{i=1}^n \biggl(\sum_{j=1}^n b_{ij}\biggr)^2\\
&= \sum_{i=1}^n \biggl(\sum_{j=1}^i u_j + (n-i) u_i\biggr)^2= \sum_{i=1}^n (v_i + (n-i) u_i)^2.
\end{align*}
These expressions make it clear that $a_n-2b_n+c_n^2=V_n/n^3$. This completes the proof of convergence. Finally, to see that $\hat{\tau}_n^2$ can be computed in time $O(n\log n)$, simply observe that the computation involves only sorting and calculating cumulative sums, both of which can be done in time $O(n\log n)$. 
 
\section*{Acknowledgements}
I thank Mona Azadkia, Peter Bickel, Holger Dette, Mathias Drton, Lihua Lei, Bodhisattva Sen, Rik Sen and Steve Stigler for a number of useful comments and references. I am especially grateful to Persi Diaconis and Susan Holmes for many suggestions that greatly improved the paper. Lastly, I thank the anonymous referees for a number of suggestions that helped improve the presentation.

\end{document}